 \newcommand{\R}{\mathbb{R}}
 \newcommand{\C}{\mathbb{C}}
 \newcommand{\ROM}[1]{\mathrm{\uppercase\expandafter{\romannumeral#1}}}
  \theoremstyle{definition}
   \numberwithin{equation}{section} \theoremstyle{plain}
 \newtheorem{thm}{Theorem}[section]
 \newtheorem{lem}{Lemma}[section]
 \newtheorem{rem}{Remark}[section]
 \newtheorem{prop}{Proposition}[section]
  \numberwithin{equation}{section}
\title[Topology and curvature of isoparametric families in spheres]{\textbf{Topology and curvature of isoparametric families in spheres}}
\author[Chao Qian]{Chao Qian}\address{School of Mathematics and Statistics, Beijing Institute of Technology, Beijing
100081, P.R. China}
\email{6120150035@bit.edu.cn}
\author[Z. Z. Tang]{Zizhou Tang}\address{Chern Institute of Mathematics $\&$ LPMC, Nankai University, Tianjin 300071, P. R. China}
\email{zztang@nankai.edu.cn}
\author[W. J. Yan]{Wenjiao Yan}\address{School of Mathematical Sciences, Laboratory of Mathematics and Complex Systems, Beijing Normal University, Beijing, 100875, P. R. China}
\email{wjyan@bnu.edu.cn}
\thanks {The project is partially supported by the NSFC (No.11722101, 11871282, 11931007), BNSF (Z190003), Nankai Zhide Foundation and Beijing Institute of Technology Research Fund Program for Young Scholars.
}
\subjclass[2010] { Primary 53C12, Secondary 55M30, 55R25.}
\keywords{Isoparametric hypersurface, focal submanifold, homotopy equivalent, homeomorphism, diffeomorphism, parallelizability, Lusternik-Schnirelmann category, sectional curvature, Ricci curvature.}
\begin{document}

\maketitle

\begin{abstract}
An isoparametric family  in the unit sphere consists of parallel isoparametric hypersurfaces and their two focal submanifolds. The present paper has two parts. The first part investigates topology of the isoparametric families, namely the homotopy, homeomorphism, or diffeomorphism types, parallelizability, as well as the Lusternik-Schnirelmann category. This part extends substantially the results of Q.M.Wang in \cite{Wa88}. The second part is concerned with their curvatures, more precisely, we determine when they have non-negative sectional curvatures or positive Ricci curvatures with the induced metric.
\end{abstract}

\section{\textbf{Introduction}}
%----------------------------------------------------------------------------------------------------------------------------------------------------------------------------------------

%\section{\textbf{Preliminaries on isoparametric families in the unit sphere.}}
An \emph{isoparametric hypersurface}  $M^n$ in the unit sphere $S^{n+1}(1)$ is a hypersurface with constant principal curvatures. Such a hypersurface always occurs as part of a family of the level hypersurfaces of an \emph{isoparametric function} $f$, which is a smooth function on $S^{n+1}(1)$ such that
\begin{equation}\label{ab}
\left\{ \begin{array}{ll}
|\nabla f|^2= b(f),\\
\,\,~\triangle f~~=a(f),
\end{array}\right.
\end{equation}
where $\nabla f$ and $\triangle f$ are the gradient and Laplacian of $f$, respectively; $b$ is a smooth function on $\mathbb{R}$, and $a$ is a continuous function on $\mathbb{R}$.
As is well known, an isoparametric family consists of regular parallel hypersurfaces with constant mean curvatures and two singular level sets carrying manifold structure, which are called focal submanifolds.

Denote the number of distinct principal curvatures of a closed isoparametric hypersurface $M^n$ by $g$, and the principal curvatures by $\lambda_1>\lambda_2>\cdots>\lambda_g$ with multiplicities $m_1,\ldots, m_g$, respectively.
Recall a celebrated result of M{\"u}nzner that $g$ can be only $1, 2, 3, 4$ or $6$, $m_i=m_{i+2}$ (subscripts mod $g$), the principal curvatures could be written as $$\lambda_i=\cot(\theta+\frac{i-1}{g}\pi)$$
 with $\theta\in (0, \frac{\pi}{g})$ $(i=1,\ldots,g)$, and a closed, connected isoparametric hypersurface $M$ must be a level set of the restriction to $S^{n+1}(1)$ of a homogeneous polynomial $F: \mathbb{R}^{n+2} \rightarrow \mathbb{R}$ of degree $g$ satisfying the \emph{Cartan-M{\"u}nzner equations:}
\begin{equation}\label{ab}
\left\{ \begin{array}{ll}
|\nabla F|^2= g^2|x|^{2g-2}, \\
~~~~\triangle F~~=\frac{m_2-m_1}{2}g^2|x|^{g-2}.
\end{array}\right.
\end{equation}
Such a polynomial $F$ is called the \emph{Cartan-M{\"u}nzner polynomial}, and $f=F~|_{S^{n+1}(1)}$ takes values in $[-1, 1]$. For $-1 < t < 1$, $f^{-1}(t)$ is an isoparametric hypersurface. The level sets $M_+=f^{-1}(1)$ and  $M_-=f^{-1}(-1)$ are the two focal submanifolds with codimensions $m_1+1$ and $m_2+1$ in $S^{n+1}(1)$, which are proved to be minimal submanifolds of the unit sphere $S^{n+1}(1)$ (c.f. \cite{No73}, \cite{Mun80}, \cite{Mun81}, \cite{Wan87}, \cite{GT14}). It is noteworthy that Fang \cite{Fa17} gave a simplified proof for M\"{u}nzner's result on the restriction of $g$, based on the work of \cite{GH87}.

With much effort of many geometers through many years, the classification of the isoparametric hypersurfaces in $S^{n+1}(1)$ was accomplished recently, which settles the 34th problem of S.T.Yau's ``Open problems in Geometry". Clearly, when $g=1$, an isoparametric hypersurface in $S^{n+1}(1)$ is a hypersphere, and the focal submanifolds are just two points. When $g=2$, an isoparametric hypersurface in $S^{n+1}(1)$ is isometric to the generalized Clifford torus $S^p(r)\times S^q(s)$ with $r^2+s^2=1$ and $p+q=n$, and the focal submanifolds are isometric to $S^p(1)$ and $S^q(1)$, respectively. When $g=3$, E. Cartan showed that the multiplicities must obey $m_1=m_2=m=1,2,4$ or $8$, and the focal submanifolds are Veronese embeddings of $\mathbb{F}P^2$ in $S^{3m+1}(1)$, where $\mathbb{F}=\R, \C, \mathbb{H}, \mathbb{O}$ corresponding to $m=1,2,4, 8$. Meanwhile, the isoparametric hypersurfaces are tubes of constant radius over the focal submanifolds. When $g=4$, the most complicated and beautiful case, the isoparametric hypersurfaces are either of OT-FKM type or homogeneous with $(m_1, m_2) =(2, 2), (4, 5)$ (c.f. \cite{CCJ07}, \cite{Imm08}, \cite{Chi11}, \cite{Chi13}, \cite{Chi20}). When $g=6$, the multiplicities satisfy $m_1=m_2=m=1$ or $2$ and the isoparametric hypersurfaces are homogeneous (c.f. \cite{DN85}, \cite{Miy13}, \cite{Miy16}).

Now we review the isoparametric family of OT-FKM type, which holds up almost all the cases with $g=4$. Given a symmetric Clifford system $\{P_0,\cdots,P_m\}$ on $\mathbb{R}^{2l}$, \emph{i.e.} $P_{\alpha}$'s are symmetric matrices satisfying $P_{\alpha}P_{\beta}+P_{\beta}P_{\alpha}=2\delta_{\alpha\beta}I_{2l}$, Ferus, Karcher and
M\"{u}nzner (\cite{FKM81}) generalized Ozeki-Takeuchi's result (\cite{OT75}, \cite{OT76}) to construct a Cartan-M\"{u}nzner polynomial $F$ of degree $4$ on
$\mathbb{R}^{2l}$
\begin{eqnarray}\label{FKM isop. poly.}
&&\qquad F:\quad \mathbb{R}^{2l}\rightarrow \mathbb{R}\nonumber\\
&&F(x) = |x|^4 - 2\displaystyle\sum_{\alpha = 0}^{m}{\langle
P_{\alpha}x,x\rangle^2}.
\end{eqnarray} It is not difficult to verify that $f=F|_{S^{2l-1}(1)}$ is an isoparametric function on $S^{2l-1}(1)$,  which is called \emph{OT-FKM type}. The multiplicity pair is $(m_1, m_2)=(m, l-m-1)$  provided $m>0$ and $l-m-1>0$, where $l=k\delta(m)$, $k$ being a positive integer and $\delta(m)$ the dimension of the irreducible module of the Clifford algebra $\mathcal{C}_{m-1}$ which is valued as:
\vspace{1mm}

%an irreducible Clifford system $\{P_0,...,P_m\}$ on $\mathbb{R}^{2l}$ exists precisely for the following values of $m$ and $l=\delta(m)$.
\begin{center}
\begin{tabular}{|c|c|c|c|c|c|c|c|c|c|}
\hline
$m$ & 1 & 2 & 3 & 4 & 5 & 6 & 7 & 8 & $\cdots$ $m$+8 \\
\hline
$\delta(m)$ & 1 & 2 & 4 & 4 & 8 & 8 & 8 & 8 & ~16$\delta(m)$\\
\hline
\end{tabular}
\end{center}
\vspace{1mm}

The focal submanifolds $M_+=f^{-1}(1)$ and $M_-=f^{-1}(-1)$ are minimal submanifolds with codimensions $m_1 +1$ and $m_2 +1$ in $S^{2l-1}(1)$.
It was proved in \cite{FKM81} that, when $m\not \equiv 0~ (mod~4)$, there exists exactly one kind of OT-FKM type isoparametric family.
When $m\equiv 0~(mod ~4)$, there are two kinds of OT-FKM type isoparametric families which are distinguished by $\mathrm{Trace}(P_0P_1\cdots P_m)$. Namely, the family with $P_0P_1\cdots P_m=\pm Id$, where without loss of generality we take the $+$ sign, which is called the \emph{definite} family, and the others with $P_0P_1\cdots P_m\neq\pm Id$, which are called \emph{indefinite}. There are exactly $[\frac{k}{2}]$ non-congruent indefinite families.

In this paper, we aim to give a systematic and comprehensive study on topological and geometric properties of isoparametric hypersurfaces and their focal submanifolds. The main motivation is to show that, there are still many interesting and important problems related to the theory of isoparametric hypersurfaces although the classification of isoparametric hypersurfaces in unit spheres has been accomplished.

Firstly, we focus on the topology of isoparametric family of OT-FKM type, which contain all the inhomogeneous isoparametric families.

Based on the work of \cite{FKM81}, Wang \cite{Wa88} began to study the homotopy, homeomorphism, and diffeomorphism types of isoparametric hypersurfaces and focal submanifolds of OT-FKM type. There has been given some basic results for the topology of the focal submanifold $M_-$ of OT-FKM type, for which we show further illustration in Theorem 2.1 in Section \ref{sec2}.
However, his results for the focal submanifold $M_+$ are not complete. In this paper, we firstly make progress on this problem.
According to \cite{Wa88} and \cite{QT16}, $M_+$ is an $S^{l-m-1}$-bundle over $S^{l-1}$. Denote by $\eta$ the associated vector bundle of rank $l-m$ over $S^{l-1}$. Concerning its triviality, we have a complete result.

\vspace{2mm}
%\begin{thm}\label{QTY2}
\noindent\textbf{Theorem 2.2.}
The bundle \emph{$\eta$ is a trivial bundle if and only if $(m_1, m_2)=(1, 2), (2, 1),
(1, 6)$, $(6, 1)$, $(2, 5), (5, 2), (3, 4), $ or the indefinite case of $(4, 3)$.}
%\end{thm}
\vspace{2mm}

As an application of Theorem \ref{QTY2} and the work of \cite{JW54}, we show the following
\vspace{2mm}

%\begin{thm}\label{QTY3}
\noindent\textbf{Theorem 2.3.}% (Theorem \ref{QTY3}).
\emph{~The focal submanifold $M_+$ of OT-FKM type is homotopy equivalent (resp. homeomorphic, diffeomorphic) to $S^{l-1}\times S^{l-m-1}$ if and only if $\eta$ is trivial, i.e., $(m_1, m_2)=(1, 2), (2, 1),
(1, 6), (6, 1), (2, 5), (5, 2), (3, 4), $ or the indefinite case of $(4, 3)$.}
%\end{thm}

As we know, the isoparametric hypersurface $M$ of OT-FKM type is diffeomorphic to $M_+\times S^m$ (c.f. \cite{FKM81}). Therefore, Theorem \ref{QTY2} and Theorem \ref{QTY3} induce the following
\vspace{2mm}

%\begin{thm}\label{QTY4}
\noindent\textbf{Theorem 2.4.} %(Theorem \ref{2.1.3}).
\emph{An isoparametric hypersurface $M$ of OT-FKM type is homotopy equivalent (resp. homeomorphic, diffeomorphic) to $S^{l-1}\times S^{l-m-1}\times S^m$ if and only if $\eta$ is trivial, namely,
$(m_1, m_2)=(1, 2), (2, 1)$, $(1, 6), (6, 1), (2, 5)$, $(5, 2)$, $(3, 4), $ or the indefinite case of $(4, 3)$ .}
%\end{thm}
\vspace{2mm}

Recall the Lusternik-Schnirelmann category $cat(X)$ of a topological space $X$ that is defined to be the least number $m$ such that there exists a covering of $X$ by $m+1$ open subsets which are contractible in $X$. Notice that for a subset $A$ in $X$ which is contractible in $X$, $A$ itself may not be contractible or even not be connected with respect to the subspace topology. In \cite{DFN90}, the authors pointed out on P. 234:  \emph{Computation of $cat(X)$ presents in general a non-trivial problem, the precise value being ascertained only with great difficulty.}

As the second aspect of topological properties, we compute the Lusternik-Schnirelmann category of isoparametric hypersurfaces and focal submanifolds of OT-FKM type.
\vspace{2mm}

%\begin{thm}\label{QTY5}
\noindent\textbf{Theorem 2.5.} %(Theorem \ref{QTY5}).
%Given a symmetric Clifford system $\{P_0,\cdots,P_m\}$ on $\mathbb{R}^{2l}$ with $l=k\delta(m)$, let
\emph{Let $M, M_+, M_-$ be the isoparametric hypersurface and focal submanifolds of OT-FKM type in the unit sphere, respectively.% and the two focal submanifolds of codimension $m_1+1$ and $m_2+1$ respectively. 
Then }
\emph{
\begin{itemize}
\item [(i).] $cat(M_{-})=2$; \\
$cat(M_{+})=2$, if $(m_1, m_2)\neq (8, 7), (9, 6)$, and $M_+$ is not 
%of both focal submanifolds $M_{\pm}$ is $2$
in the following two cases
\begin{itemize}
\item[(a).] $(m_1, m_2)=(1, 1)$, $cat(M_+)=cat(SO(3))=3$; %\vspace{1mm}
\item[(b).] $(m_1, m_2)=(4, 3)$ in the definite case, $cat(M_+)=cat(Sp(2))=3$.
\end{itemize}%\vspace{1mm}
\end{itemize}
\begin{itemize}
\item[(ii).] $cat(M)=3$, if $(m_1, m_2)\neq (8, 7), (9, 6)$, and $M$ is not 
in the following two cases
% except for the following two cases:
\begin{itemize}
\item[(a).]
$(m_1, m_2)=(1, 1)$, $cat(M)=4$;
\item[(b).] $(m_1, m_2)=(4, 3)$ in the definite case, $cat(M)=4$.
\end{itemize}
\end{itemize}}
%\end{thm}
\vspace{2mm}
\begin{rem}
It is still a problem to determine the Lusternik-Schnirelmann category of the focal submanifold $M_+$ of OT-FKM type with $(g, m_1, m_2)=(4, 8, 7)$ or $(4, 9, 6)$.
\end{rem}

Due to \cite{FKM81}, $M_-$ of OT-FKM type is an $S^{l-1}$-bundle over $S^m$. Denote the associated vector bundle by $\xi$ so $M_-$ is diffeomorphic to $S(\xi)$. Considering another topological aspect--the parallelizability of the isoparametric family of OT-FKM type,  we establish
%Concerning another topological aspect--the parallelizability of the isoparametric family of OT-FKM type, we also obtain the following result, where we denote by $\xi$ the vector bundle of rank $l$ over $S^m$ associated with $M_-$of OT-FKM type, which is proved to be an $S^{l-1}$-bundle over $S^m$ (c.f. \cite{FKM81}). Namely, $M_-$ is diffeomorphic to $S(\xi)$.
%about the tangent bundle of isoparametric hypersurfaces and focal submanifolds of OT-FKM type.

\vspace{2mm}

%\begin{thm}\label{s-parallel}
\noindent\textbf{Theorem 2.6.}% (Theorem \ref{s-parallel}).
%Given a symmetric Clifford system $\{P_0,\cdots,P_m\}$ on $\mathbb{R}^{2l}$ with $l=k\delta(m)$, let
\emph{~~Let $M, M_+, M_-$ be the isoparametric hypersurface and focal submanifolds of OT-FKM type, respectively.
%, and the two focal submanifolds of codimension $m_1+1$ and $m_2+1$ respectively.
Then
\begin{itemize}
\item[(i).] $M_-$ is parallelizable if and only if $M_-$ is s-parallelizable, if and only if
$\xi$ is trivial;
%\vspace{1mm}
\item[(ii).] $M_+$ is parallelizable;
%\vspace{1mm}
\item[(iii).] $M$ is parallelizable.
\end{itemize}}
%\end{thm}
\vspace{2mm}

Next, we turn to the homogeneous cases. We make a detailed investigation of topological properties of homogeneous isoparametric hypersurfaces and focal submanifolds in the unit sphere. For instance, a systematic study on the topological properties of focal submanifolds with $(g, m_1, m_2)=(4, 2, 2)$ and $(g, m_1, m_2)=(4, 4, 5)$ in Theorem \ref{QTY6} and Theorem \ref{QTY7}, respectively. As for the isoparametric families with $g=3$ and $g=6$, we deduce the following
\vspace{2mm}

%\begin{thm}\label{QTY8}
\noindent\textbf{Proposition 2.1.} %(Theorem \ref{QTY8}).
\emph{Let $M$ be the isoparametric hypersurface in $S^{3m+1}$ with $g=3$ and $m_1=m_2=m=1,2,4$ or $8$. Then
\begin{itemize}
\item[(i).] for $m=1$: $cat(M^3)=cat(SO(3)/\mathbb{Z}_2\oplus \mathbb{Z}_2)=3$;%\vspace{1mm}
\item[(ii).] for $m=2$: $cat(M^{6})=cat(SU(3)/T^2)=3$;%\vspace{1mm}
\item[(iii).] for $m=4$: $cat(M^{12})=cat(Sp(3)/Sp(1)^3)=3$;%\vspace{1mm}
\item[(iv).] for $m=8$: $cat(M^{24})=cat(F_4/Spin(8))=3$.
\end{itemize}}
%\end{thm}
%\vspace{2mm}

\begin{rem}
For focal submanifolds of isoparametric hypersurfaces in unit spheres with $g=3$, it is well known that $cat(\mathbb{F}P^2)=2$ for $\mathbb{F}= \mathbb{R}, \mathbb{C}, \mathbb{H}$ or $\mathbb{O}$.
\end{rem}

%\begin{thm}\label{QTY9}
\noindent\textbf{Proposition 2.2.} %(Theorem \ref{QTY9}).
\emph{Let $M$ be the isoparametric hypersurface in $S^{6m+1}$ with $g=6$ and $m_1=m_2=m=1$ or $2$. Then
%For the isoparametric family with $g=6$ in the unit sphere, we have:
\begin{itemize}
\item[(i).] for $m=1$: $cat(M^6)=4$ and $cat(M_{\pm})=3$;%\vspace{1mm}
\item[(ii).] for $m=2$: $cat(M^{12})=6$ and $cat(M_{\pm})=5$.
\end{itemize}}
%\end{thm}
\vspace{2mm}

 In the second part of this paper, we investigate the intrinsic curvature properties of isoparametric hypersurfaces and focal submanifolds in unit spheres with induced metrics.  Since the curvature of the isoparametric family with $g\leq 3$ with induced metric is now very clear (which will be introduced in Section \ref{sec3}), we only need to consider the remaining cases with $g=4, 6$ and prove the following theorem as one of our main results
 \vspace{2mm}

\noindent\textbf{Theorem 3.1.} %(Theorem \ref{1}).
\emph{For the focal submanifolds of an isoparametric hypersurface in $S^{n+1}(1)$ with $g=4$ or $6$ distinct principal curvatures,
\begin{itemize}
\item[(i)] When $g=4$, the sectional curvatures of a focal submanifold with induced metric are non-negative if and only if the focal submanifold is one of the following
\begin{itemize}
\item[(a)] $M_+$ of OT-FKM type with multiplicities $(m_1, m_2)=(2,1), (6,1)$ or $(4,3)$ in the definite case;
\item[(b)] $M_-$ of OT-FKM type with multiplicities $(m_1, m_2)=(1, k)$;
\item[(c)] the focal submanifold with multiplicities $(m_1, m_2)=(2,2)$ and diffeomorphic to the oriented Grassmannian $\widetilde{G}_2(\R^5)$;
\end{itemize}
\item[(ii)] When $g=6$, the sectional curvatures of all focal submanifolds with induced metric are not non-negative.
\end{itemize}}
 \vspace{2mm}

Furthermore, we also study the Ricci curvatures of isoparametric families in Proposition \ref{Ricci of hyp}
and Proposition \ref{Ricci of focal}.

The present paper is organized as follows. In Section 2, we will study the topological properties of isoparametric hypersurfaces and focal submanifolds in the unit sphere, dividing the investigation into two subsections for the OT-FKM type and the homogeneous cases, respectively. In Section 3, we will focus on the intrinsic curvature properties of isoparametric families, concentrating on the sectional curvatures of focal submanifolds of OT-FKM type.

\section{\textbf{Topology of an isoparametric family}}\label{sec2}
In this section, we will study various topological properties of isoparametric hypersurfaces and associated focal submanifolds in unit spheres. We firstly deal with the isoparametric families of OT-FKM type which contain all the inhomogeneous cases, then deal with the homogeneous cases, including the cases with $g=3,6$ and $g=4$, $(m_1, m_2)=(2,2), (4,5)$. %The basic reference is \cite{Wa88}.

\subsection{OT-FKM type}
Given a symmetric Clifford system
$\{P_0,\cdots,P_m\}$ on $\mathbb{R}^{2l}$ with $l=k\delta(m)$, let $M, M_+, M_-$ be the associated isoparametric hypersurface of OT-FKM type in $S^{2l-1}(1)$, and the two focal submanifolds of codimension $m_1+1$ and $m_2+1$ in $S^{2l-1}(1)$, respectively.

\subsubsection{\textbf{Homotopy, homeomorphism, and diffeomorphism types of $M_-$}}\label{M_-}

As asserted by Ferus-Karcher-M\"{u}nzner \cite{FKM81} in Theorem 4.2,
$M_-$ is an $S^{l-1}$-bundle over $S^m$. Let $\xi$ be the associated vector bundle of rank $l$ over $S^m$. Then $M_-$ is diffeomorphic to $S(\xi)$, the associated sphere bundle of $\xi$. By the proof of Corollary 1 and 2 in \cite{Wa88}, it follows in fact that

\noindent\textbf{Theorem of Wang.} \emph{For the vector bundle} $\xi$, \emph{one has}

(i). \emph{If} $m=3, 5, 6$ \emph{or} $7$ ($\mathrm{mod}$ $8$), \emph{then} $\xi$ \emph{is trivial.}

(ii). \emph{If} $m=1$ or $2$ ($\mathrm{mod}$ $8$), \emph{then} $\xi$ \emph{is trivial if and only if} $k$ \emph{is even.}

(iii). \emph{If} $m=0$ ($\mathrm{mod}$ $4$), \emph{then} $\xi$ \emph{is trivial if and only if} $q=0$, \emph{where}
$$2q\delta(m)=\mathrm{Tr}(P_0P_1\cdots P_m).$$

Considering the homotopy type, homeomorphism type and diffeomorphism types of $M_-$, we have

%-----------------------------------------------------------------------------------------------------------------
\begin{thm}\label{QTY1} For the focal submanifold $M_-$ of OT-FKM type, we have
\begin{itemize}
\item[(i).] If $m=3, 5, 6$ or $7$ $(\mathrm{mod}$ $8)$, then $M_-$ is diffeomorphic to $S^m\times S^{l-1}$.\vspace{2mm}

\item[(ii).] If $m=1$ or $2$ $(\mathrm{mod}$ $8)$, then $M_-$ is homotopy equivalent (resp. homeomorphic, diffeomorphic) to $S^m\times S^{l-1}$ if and only if $k$ is even.\vspace{2mm}

\item[(iii).] If $m=0$ $(\mathrm{mod}$ $4)$, then $M_-$ is homotopy equivalent to $S^m\times S^{l-1}$ if and only if $q=0$ $(\mathrm{mod}$ $d_m)$, where $d_m$ is the denominator of $B_{m/4}/m$ and $B_{m/4}$ is the $(m/4)$-th Bernoulli number. Moreover, $M_-$ is homeomorphic (resp. diffeomorphic) to $S^m\times S^{l-1}$ if and only if $q=0$.
\end{itemize}
\end{thm}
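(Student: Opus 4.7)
The plan is to regard $M_-\cong S(\xi)$ as the total space of a sphere bundle over $S^m$ with clutching element $\alpha_\xi \in \pi_{m-1}(SO(l))$, and to compare the topology of $S(\xi)$ with that of $S^m\times S^{l-1}$ by analyzing the image of $\alpha_\xi$ under the (unstable) $J$-homomorphism $J\colon \pi_{m-1}(SO(l))\to \pi_{m-1}(S^{l-1})$. The Theorem of Wang recalled just above already pins down when $\xi$ itself is trivial, so what remains is to decide when $S(\xi)$ shares the homotopy, homeomorphism or diffeomorphism type of $S^m\times S^{l-1}$ even when $\xi$ need not be trivial, and to express this condition in the numerical invariants $k$ and $q$. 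Part (i) is then immediate: Wang gives $\xi$ trivial, hence $S(\xi)\cong S^m\times S^{l-1}$ as smooth bundles.

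For the remaining two cases I would invoke the classification of sphere bundles over spheres due to James and Whitehead \cite{JW54}: within the stable range guaranteed here by $l=k\delta(m)\geq m+2$, the total space $S(\xi)$ is homotopy equivalent to $S^m\times S^{l-1}$ if and only if $J(\alpha_\xi)=0$ in $\pi_{m-1}(S^{l-1})$. Feeding in Adams's computation of the image of $J$ then does the rest: when $m\equiv 1,2\pmod 8$ the stable $J$-homomorphism is injective on $\pi_{m-1}(SO)=\mathbb{Z}/2$, while when $m\equiv 0\pmod 4$ the image of $J$ on $\pi_{m-1}(SO)=\mathbb{Z}$ is cyclic of order $d_m$, the denominator of $B_{m/4}/m$. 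Combining with Wang's analysis of $\alpha_\xi$ in terms of the Clifford structure (matching it, up to a unit, with $k\bmod 2$ for $m\equiv 1,2\pmod 8$ and with $q$ for $m\equiv 0\pmod 4$) yields the ``homotopy equivalent'' part of (ii) and (iii). In case (ii) the injectivity of $J$ moreover forces $\alpha_\xi=0$, so $\xi$ is trivial and $S(\xi)\cong S^m\times S^{l-1}$ as smooth bundles at once.

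The last and subtlest step is the homeomorphism/diffeomorphism direction of (iii): a homeomorphism $S(\xi)\approx S^m\times S^{l-1}$ must imply $q=0$, not merely $q\equiv 0\pmod{d_m}$. My approach is to detect $q$ by a rational Pontryagin class of $\xi$. A direct Clifford-algebra computation should identify $p_{m/4}(\xi)\in H^m(S^m;\mathbb{Q})$ as a nonzero rational multiple of $q$. Because $l>m$, the Euler class of $\xi$ vanishes, so the Gysin sequence makes $\pi^*\colon H^*(S^m;\mathbb{Q})\hookrightarrow H^*(S(\xi);\mathbb{Q})$ injective, and the rational Pontryagin classes of $\xi$ are encoded in those of $TS(\xi)$. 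Novikov's topological invariance of rational Pontryagin classes then converts a homeomorphism $S(\xi)\approx S^m\times S^{l-1}$ into the vanishing of $p_{m/4}(\xi)$, hence $q=0$, which by Wang forces $\xi$ trivial and $S(\xi)$ diffeomorphic to $S^m\times S^{l-1}$. The main obstacle I expect is the explicit identification of $p_{m/4}(\xi)$ with the Clifford-algebraic invariant $q=\tfrac{1}{2\delta(m)}\mathrm{Tr}(P_0\cdots P_m)$, which requires careful bookkeeping inside the Clifford algebra and is the technical heart of the argument.
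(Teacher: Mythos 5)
Your argument follows essentially the same route as the paper's where the paper actually gives a proof. For part (i) both of you quote Wang's triviality criterion; for part (ii) with $k$ odd and $m>1$ both of you show $J\chi(\xi)\neq 0$ using Adams's injectivity of the stable $J$-homomorphism on the $\mathbb{Z}/2$ summand of $\pi_{m-1}(SO)$ and then invoke Theorem 1.11 of \cite{JW54}. For part (iii) the paper does not argue at all but simply cites Theorem 1 of \cite{Wa88}; you instead reconstruct that proof (order of the image of $J$ equal to $d_m$ for the homotopy statement; rational Pontryagin classes, injectivity of $\pi^*$ via the Gysin sequence, and Novikov's topological invariance for the homeomorphism statement), which is the standard argument and, modulo the computation identifying $p_{m/4}(\xi)$ with a nonzero multiple of $q$ that you correctly flag as the technical core, is what Wang does. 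The one point you should repair is the case $m=1$, which belongs to case (ii): there the clutching element lives in $\pi_0(O(l))\cong\mathbb{Z}/2$ rather than $\pi_0(SO(l))=0$, so the $J$-homomorphism framework says nothing. The paper handles this separately by noting that for $k$ odd the bundle is non-orientable, hence $M_-$ cannot be homotopy equivalent to the orientable manifold $S^1\times S^{l-1}$; you should add this observation to your treatment of (ii).
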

\begin{proof}
Part (i) follows from part (i) of the theorem above, and part (iii) follows from Theorem 1 in \cite{Wa88}.

We only need to consider part (ii).

If $k$ is even, $\xi$ is trivial and $M_-$ is diffeomorphic to $S^m\times S^{l-1}$ by Theorem of Wang.

If $m=1$ and $k$ is odd, $\xi$ is not trivial and $M_-$ is not orientable.  Hence, $M_-$ is not homotopy equivalent to $S^1\times S^{l-1}$.

If $m>1$ and $k$ is odd, denoted by $\chi(\xi)$ the characteristic map of $\xi$. We know that $\chi(\xi)\in \pi_{m-1}O(l)$ is not zero. Since $\pi_{m-1}O(l)\cong\pi_{m-1}O$ and $\pi_{l+m-1}S^l\cong \pi^S_{m-1}$, it follows from Theorem 1.1 and 1.3 of \cite{Ad66} that $J\chi(\xi)\neq 0$, where
$$J: \pi_{m-1}O\rightarrow \pi_{m-1}^{S}$$
is the stable $J$-homomorphism. Then, by Theorem 1.11 of \cite{JW54}, $M_-$ is not homotopy equivalent to $S^m\times S^{l-1}$. Now part (ii) follows.
\end{proof}

\vspace{3mm}

\subsubsection{\textbf{Homotopy, homeomorphism, and diffeomorphism types of $M_+$}}

For a given symmetric Clifford system $\{P_0, P_1,..., P_{m}\}$ on $\mathbb{R}^{2l}$, we can choose a set of orthogonal matrices $\{E_1, E_2,..., E_{m-1}\}$ on $\mathbb{R}^l$ with the Euclidean metric, which satisfy
$E_{\alpha}E_{\beta}+E_{\beta}E_{\alpha}=-2\delta_{\alpha\beta}Id$\,\,%\mathrm{Id}$
for $1 \leq \alpha,\beta\leq m-1$ and
\begin{equation}\label{FKM}
\begin{array}{ll}
P_0=\left(
\begin{array}{cc}
Id & 0 \\
0 & -Id \\
\end{array}\right),\;
P_1=\left(
\begin{array}{cc}
0 & Id\\
Id & 0 \\
\end{array}\right),\;\vspace{3mm}
\\
 P_{\alpha}=\left(
\begin{array}{cc}
0 & E_{\alpha-1}\\
-E_{\alpha-1} & 0 \\
\end{array}\right),
~\emph{for}~ 2 \leq \alpha\leq m.
\end{array}
\end{equation}
Then
\begin{equation}\label{M_+}
M_+=\{x\in S^{2l-1}~|~\langle P_0x, x\rangle=\langle P_1x, x\rangle=\cdots=\langle P_mx, x\rangle=0\}.
\end{equation}

To interpret the topology of $M_+$, we firstly have
\begin{lem}\label{bundle}(\cite{Wa88}, \cite{QT16})
Let $\eta$ be the subbundle of $TS^{l-1}$ such that the fiber of $\eta$ at $z\in S^{l-1}$ is the orthogonal complement in $\mathbb{R}^l$ of $m$-plane
spanned by $\{z, E_1z,..., E_{m-1}z\}$. Then $M_+$ is diffeomorphic to $S(\eta)$, the associated sphere bundle of $\eta$.
\end{lem}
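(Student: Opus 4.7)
The plan is to identify $M_+$ explicitly by writing each point $x\in\mathbb{R}^{2l}$ as $x=(u,v)\in\mathbb{R}^l\oplus\mathbb{R}^l$ and translating the defining equations (\ref{M_+}) into conditions on $(u,v)$, then realizing $M_+$ as the unit sphere bundle of $\eta$ via a rescaling.

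First, I will use the standard form (\ref{FKM}) of the $P_\alpha$'s. A direct matrix computation gives
\begin{equation*}
\langle P_0 x,x\rangle=|u|^2-|v|^2,\qquad \langle P_1 x,x\rangle=2\langle u,v\rangle.
\end{equation*}
For $2\le\alpha\le m$, the symmetry of $P_\alpha$ together with the anticommutation relations $E_\beta^2=-Id$ forces $E_\beta^T=-E_\beta$, so each $E_\beta$ is skew-symmetric; this yields $\langle P_\alpha x,x\rangle=-2\langle E_{\alpha-1}u,v\rangle$. Thus on $M_+$, combined with $|x|^2=1$, we obtain
\begin{equation*}
|u|=|v|=\tfrac{1}{\sqrt 2},\qquad v\perp\mathrm{span}\{u,E_1 u,\dots,E_{m-1}u\}.
\end{equation*}

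Next, I will check that $\eta$ is a well-defined smooth subbundle of $TS^{l-1}$ of rank $l-m$. For any $z\in S^{l-1}$, the skew-symmetry of $E_\beta$ gives $\langle z,E_\beta z\rangle=0$, and for $\beta\ne\gamma$ the Clifford relation $E_\beta E_\gamma+E_\gamma E_\beta=-2\delta_{\beta\gamma}Id$ together with skew-symmetry yields $\langle E_\beta z,E_\gamma z\rangle=0$ and $|E_\beta z|=1$. Hence $\{z,E_1z,\dots,E_{m-1}z\}$ is orthonormal for every $z\in S^{l-1}$, so $\eta_z$ has dimension $l-m$ and lies in $T_z S^{l-1}$; smoothness is clear because the defining orthogonality conditions depend smoothly on $z$.

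Finally, I will construct the diffeomorphism explicitly. Define
\begin{equation*}
\Phi:M_+\longrightarrow S(\eta),\qquad \Phi(u,v)=\bigl(\sqrt{2}\,u,\ \sqrt{2}\,v\bigr).
\end{equation*}
By the first step, $\sqrt{2}\,u\in S^{l-1}$ and $\sqrt{2}\,v$ is a unit vector in $\eta_{\sqrt{2}\,u}$, so $\Phi$ is well-defined and smooth. Its inverse $\Psi(z,w)=(z/\sqrt{2},w/\sqrt{2})$ is also smooth, and the composites are the identity. Therefore $\Phi$ is a diffeomorphism.

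There is no substantive obstacle here: the argument is a bookkeeping exercise once the explicit form (\ref{FKM}) is in hand. The only point that requires care is the skew-symmetry of the $E_\beta$'s, which is what makes the equations $\langle P_\alpha x,x\rangle=0$ for $2\le\alpha\le m$ reduce cleanly to the orthogonality condition $v\perp E_{\alpha-1}u$ (rather than vanishing identically or producing a symmetric pairing); this is the step I would double-check most carefully.
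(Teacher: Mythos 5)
Your proof is correct and is essentially the standard argument from \cite{Wa88} and \cite{QT16} (the paper itself only cites these references and gives no proof): decompose $x=(u,v)$, use the explicit form (\ref{FKM}) and the skew-symmetry of the $E_\beta$'s to rewrite the defining equations of $M_+$ as $|u|=|v|=1/\sqrt{2}$, $v\perp\mathrm{span}\{u,E_1u,\dots,E_{m-1}u\}$, and rescale. All the details you flag (skew-symmetry of $E_\beta$, orthonormality of $\{z,E_1z,\dots,E_{m-1}z\}$, hence constant rank $l-m$ of $\eta$) check out.
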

\vspace{1mm}

%\noindent\textbf{Theorem 1.1}.
\begin{thm}\label{QTY2}
The bundle $\eta$ is a trivial bundle if and only if $(m_1, m_2)=(1, 2), (2, 1),\\
(1, 6), (6, 1), (2, 5), (5, 2), (3, 4), $ or the indefinite case of $(4, 3)$.
\end{thm}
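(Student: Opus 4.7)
The plan is to identify $\eta$ as a stably trivial vector bundle of rank $l-m$ over $S^{l-1}$, apply Adams' theorem on parallelizable spheres to restrict $l$ to at most two values, and then verify each remaining case individually. The only genuinely delicate piece is the definite case of $(4,3)$. First, I observe that the Clifford relations $E_\alpha E_\beta+E_\beta E_\alpha=-2\delta_{\alpha\beta}\Id$ combined with $E_\alpha^T=-E_\alpha$ force $\{z,E_1z,\ldots,E_{m-1}z\}$ to be pointwise orthonormal in $\mathbb{R}^l$ for every $z\in S^{l-1}$. Hence the sections $z\mapsto E_\alpha z$ trivialize the rank-$(m-1)$ subbundle $\mu\subset TS^{l-1}$ complementary to $\eta$ (by Lemma 2.1), giving $\eta\oplus\epsilon^{m-1}\cong TS^{l-1}$ and, adjoining the outward-normal line, $\eta\oplus\epsilon^m\cong\epsilon^l$. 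In particular $\eta$ is stably trivial, and $\eta$ is trivial if and only if the partial frame $\{E_\alpha z\}$ extends to a global parallelization of $S^{l-1}$.

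For the necessary direction, if $\eta$ is trivial then $TS^{l-1}=\eta\oplus\mu$ is trivial, so $S^{l-1}$ is parallelizable. Adams' theorem forces $l\in\{1,2,4,8\}$, and since an OT-FKM family requires $m\geq 1$ and $l-m-1\geq 1$ we must have $l\in\{4,8\}$. The admissible pairs are $(m_1,m_2)=(1,2),(2,1)$ for $l=4$ and $(m_1,m_2)=(m,7-m)$ with $1\leq m\leq 6$ for $l=8$, with a definite/indefinite subdivision when $m=4$.

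For the sufficient direction, I would first dispatch the easy endpoints: $m=1$ gives $\eta=TS^{l-1}$, which is trivial for $l\in\{4,8\}$; $m=l-2$ gives $\eta$ of rank $2$ classified by $\pi_{l-2}(SO(2))=0$, hence trivial. These cover $(1,2),(1,6),(2,1),(6,1)$. For $l=8$ and $m\in\{2,3,4,5\}$, I would identify $\mathbb{R}^8$ with the octonion algebra $\mathbb{O}$, set $E_\alpha=L_{e_\alpha}$ (left multiplication by $e_\alpha$), and use alternativity of $\mathbb{O}$ to check the Clifford relations $L_{e_\alpha}L_{e_\beta}+L_{e_\beta}L_{e_\alpha}=-2\delta_{\alpha\beta}\Id$. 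The further sections $z\mapsto L_{e_j}(z)$ for $j=m,\ldots,7$ are then automatically orthonormal and orthogonal to $\{z,E_1z,\ldots,E_{m-1}z\}$, and so trivialize $\eta$. Since for $m\in\{2,3,5\}$ the Clifford module structure on $\mathbb{R}^8$ is unique up to equivalence, this construction realizes the only OT-FKM family and yields triviality for $(2,5),(5,2),(3,4)$. A direct block computation with the explicit $P_\alpha$ gives $P_0P_1P_2P_3P_4=\operatorname{diag}(E_1E_2E_3,E_1E_2E_3)$; in the octonion model one verifies by direct calculation that $\operatorname{Tr}(L_{e_1}L_{e_2}L_{e_3})=0$, so this realizes the indefinite $(4,3)$ family, proving triviality of $\eta$ there as well.

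The remaining case, the definite $(4,3)$ family, is the main obstacle. All characteristic classes of $\eta$ vanish because $H^i(S^7)=0$ for $0<i<7$, so nontriviality cannot be detected by Chern-Weil-type invariants and must be read off from the unstable classifying group $\pi_6(SO(4))\cong\mathbb{Z}/12\oplus\mathbb{Z}/12$. The cleanest route I see is to invoke the global structure of $M_+$: classical work of Ferus-Karcher-M\"unzner (as elaborated in Qian-Tang) identifies the focal submanifold $M_+$ in the definite case with the Lie group $Sp(2)$, realized as the total space of the nontrivial principal $Sp(1)$-bundle $Sp(2)\to Sp(2)/Sp(1)=S^7$. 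If $\eta$ were trivial, then $M_+=S(\eta)$ would be $S^7\times S^3$; but $Sp(2)\not\simeq S^7\times S^3$, as one sees from the long exact sequence of $Sp(1)\to Sp(2)\to S^7$, which (together with the fact that the clutching element generates $\pi_6(S^3)=\mathbb{Z}/12$) yields $\pi_6(Sp(2))=0$, whereas $\pi_6(S^7\times S^3)=\mathbb{Z}/12$. Hence $\eta$ must be nontrivial in the definite case, completing the proof.
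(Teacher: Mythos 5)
Your proposal is correct, and its skeleton coincides with the paper's: both start from $\eta\oplus\bm{\varepsilon}^{m-1}\cong TS^{l-1}$ and Adams' theorem to force $l\in\{4,8\}$ when $\eta$ is trivial, and both kill the definite $(4,3)$ case by noting that $M_+\cong Sp(2)$ is not homotopy equivalent to $S^7\times S^3$ (you make the $\pi_6$ computation explicit where the paper just cites the fact). Where you genuinely diverge is in verifying triviality in the eight listed cases. The paper does this piecemeal: $m=1$ via $\eta=TS^{l-1}$; $m=2$ via the Stiefel manifold $U(k)/U(k-2)$ together with James--Whitehead and Adams (an argument already subsumed by the opening parallelizability step); $m=3$ via $\pi_6SO(5)=0$; and the indefinite $(4,3)$, $(5,2)$, $(6,1)$ cases by asserting that the Clifford system extends. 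You instead give a uniform construction for $l=8$, $2\le m\le 5$: take $E_\alpha=L_{e_\alpha}$ on $\mathbb{O}$, so the remaining left multiplications $L_{e_j}$, $j\ge m$, explicitly frame $\eta$; uniqueness of the $\mathcal{C}_{m-1}$-module structure on $\mathbb{R}^8$ for $m=2,3,5$, plus your computation $\mathrm{Tr}(L_{e_1}L_{e_2}L_{e_3})=0$ (which I checked against the Cayley--Dickson product used in Section 3 of the paper: $L_{e_1}L_{e_2}L_{e_3}=\mathrm{diag}(-1,1)$ on $\mathbb{H}\oplus\mathbb{H}$), guarantees that this model is the relevant (indefinite, for $m=4$) family. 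Your handling of $(2,1)$ and $(6,1)$ via $\pi_{l-2}(SO(2))=0$ is also cleaner and more robust than the paper's appeal to congruence and extension, since it applies to any rank-two bundle. The one point worth a sentence more of care is the passage from ``the $\mathcal{C}_{m-1}$-representation is unique up to equivalence'' to ``$\eta$ built from any representative is trivial'': conjugating the $E_\alpha$ by $B\in O(l)$ replaces $\eta$ by its pullback along the self-map $z\mapsto B^{-1}z$ of $S^{l-1}$, which preserves triviality --- routine, but it should be said, since $\eta$ is defined from a chosen normal form of the Clifford system.
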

\vspace{1mm}

\begin{proof}
Firstly, Lemma \ref{bundle} implies that $S^{l-1}$ is parallelizable if $\eta$ is trivial. According to \cite{Ad62}, if $\eta$ is trivial, then $l=2, 4$ or $8.$
Since $m_1=m>0$ and $m_2=l-m-1>0$, it follows that $l\geq m+2\geq 3$. Clearly, when $m\geq 7$, $l\geq m+2\geq 9$, and thus $\eta$ is not trivial.

To complete the proof, we still need to consider the case $m\leq 6$.

%\begin{itemize}
%\item[(1)]
If $m=1$, then $l=k\delta(m)=k$ and $\eta$ is isomorphic to $TS^{l-1}$. For this case, $\eta$ is trivial if and only if $l=k=4$ or $8$. Namely,  $(m_1, m_2)=(1, 2)$ or $(1, 6)$.

%\item[(2)]
If $m=2$, then $l=k\delta(2)=2k$, $k\geq 2$ and $M_+$ is diffeomorphic to the complex Stiefel manifold $U(k)/U(k-2)$ (c.f. \cite{TXY12}). According to \cite{JW54}, if $U(k)/U(k-2)$ is of the same homotopy type with $S^{2k-1}\times S^{2k-3}$, then $\pi_{4k-1}(S^{2k})$ contains an element whose Hopf invariant is unity. Furthermore, by \cite{Ad60}, it implies that $k=2$ or $4$. Hence, if $\eta$ is trivial, then $k=2$ or $4$. Namely,  $(m_1, m_2)=(2, 1)$ or $(2, 5)$.
Conversely, for the focal submanifold $M_+$ of OT-FKM type with $(m_1, m_2)=(2, 1)$ or $(2, 5)$,
%if $k=2$ or $4$,
it is clear that $\eta$ is trivial.

%\item[(3)]
If $m=3$, then $l=k\delta(3)=4k$ and $k\geq 2$. Thus, if $\eta$ is trivial, then $k=2$. Namely,  $(m_1, m_2)=(3, 4)$. Conversely, for the focal submanifold $M_+$ of OT-FKM type with $(m_1, m_2)=(3, 4)$, it follows from $\pi_{6}SO(5)=0$ that $\eta$ is trivial.

%\item[(4)]
If $m=4$, then $l=k\delta(4)=4k$ and $k\geq 2$. For this case, $\eta$ is trivial implies $k=2$, that is, $(m_1, m_2)=(4, 3)$. Now, we need to consider the number $q$ which is defined by $q\delta(m)=\mathrm{Tr}(P_0P_1\cdots P_m)$.
When $k=2$ and $q=2$, i.e. the definite case of $(4, 3)$, $M_+$ is diffeomorphic to $Sp(2)$ and is not the same homotopy type as $S^7\times S^3$. It means that $\eta$ is not trivial for the definite case.
When $k=2$ and $q=0$, i.e. the indefinite case of $(4, 3)$, the symmetric Clifford system can be extended, which implies that $\eta$ is trivial (c.f. \cite{QTY13}, \cite{QT16})

%\item[(5)]
If $m=5$, then $l=k\delta(5)=8k$ and $k\geq 1$. For this case, $\eta$ is trivial implies $k=1$, that is, $(m_1, m_2)=(5, 2)$. Conversely, for the focal submanifold $M_+$ of OT-FKM type with $(m_1, m_2)=(5, 2)$, %if $m=5$ and $k=1$, then
the symmetric Clifford system can be extended, which implies that $\eta$ is trivial.

%\item[(6)]
If $m=6$, then $l=k\delta(6)=8k$ and $k\geq 1$. Similar to the case $m=5$, we have $\eta$ is trivial if and only if $k=1$, i.e. $(m_1, m_2)=(6, 1)$.
%\end{itemize}
\end{proof}

\vspace{1mm}

%\noindent\textbf{Theorem 1.2}.
\begin{thm}\label{QTY3}
The focal submanifold $M_+$ of OT-FKM type is homotopy equivalent (resp. homeomorphic, diffeomorphic) to $S^{l-1}\times S^{l-m-1}$ if and only if $\eta$ is trivial.
\end{thm}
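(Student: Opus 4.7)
The plan is to prove the nontrivial direction---``$M_+$ homotopy equivalent to the product implies $\eta$ is trivial''---via a James--Whitehead obstruction, and then combine it with Theorem~\ref{QTY2}. The ``if'' direction is immediate from Lemma~\ref{bundle}: if $\eta$ is trivial, then $M_+ \cong S(\eta) = S^{l-1} \times S^{l-m-1}$ as smooth manifolds, yielding diffeomorphism, and hence homeomorphism and homotopy equivalence, all at once.

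For the converse, the principal tool is Theorem~1.11 of \cite{JW54}, already invoked in the proof of Theorem~\ref{QTY1}(ii): for a rank-$(l-m)$ vector bundle $\eta$ over $S^{l-1}$ with characteristic class $\chi(\eta) \in \pi_{l-2}(SO(l-m))$, the associated sphere bundle $S(\eta)$ has the homotopy type of $S^{l-1} \times S^{l-m-1}$ if and only if $J\chi(\eta) = 0$ in the appropriate homotopy group of $S^{l-m-1}$, where $J$ is the unstable $J$-homomorphism. Thus a homotopy equivalence $M_+ \simeq S^{l-1} \times S^{l-m-1}$---the weakest of the three hypotheses, and hence the one to establish---forces $J\chi(\eta) = 0$. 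Combined with Theorem~\ref{QTY2}, which classifies precisely when $\eta$ is trivial, the proof then reduces to showing that whenever $\chi(\eta) \neq 0$ for an OT-FKM $\eta$, one also has $J\chi(\eta) \neq 0$.

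I would carry out this verification case-by-case, mirroring the structure of the proof of Theorem~\ref{QTY2}. For $m = 1$ the bundle is $\eta \cong TS^{l-1}$, and the vanishing of $J\chi(\eta)$ amounts to $S^{l-1}$ admitting an $H$-space structure; by Adams's Hopf-invariant-one theorem this forces $l \in \{2,4,8\}$, and $l \geq 3$ leaves $(m_1,m_2) = (1,2)$ or $(1,6)$. For $m = 2$, the identification $M_+ \cong U(k)/U(k-2)$ reduces via \cite{JW54} to the existence of a Hopf-invariant-one map $S^{4k-1} \to S^{2k}$, which by \cite{Ad60} forces $k \in \{2,4\}$, i.e.\ $(m_1,m_2) = (2,1)$ or $(2,5)$. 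The cases $m = 3, 5, 6$ reduce, using $l = k\delta(m)$, to small $k$ by direct calculation in $\pi_{l-2}(SO(l-m))$. In the case $m = 4$ one splits on the trace invariant $q$: the indefinite subcase ($q=0$, $k=2$) corresponds to an extendable Clifford system and yields triviality of $\eta$, while the definite subcase identifies $M_+$ with $Sp(2)$, whose cohomology ring differs from $H^{*}(S^7)\otimes H^{*}(S^3)$, ruling out the product homotopy type. For $m \geq 7$ one has $l \geq m+2 \geq 9$, and Adams's non-parallelizability of $S^{l-1}$ combined with the non-vanishing of the stable $J$-homomorphism \cite{Ad66} rules out $J\chi(\eta) = 0$.

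The principal obstacle is to verify in each case that $\chi(\eta) \notin \ker J$: a priori the unstable $J$-homomorphism could annihilate a nontrivial characteristic class. In the OT-FKM setting, however, $\chi(\eta)$ is tightly constrained by the Clifford algebra structure, and in each nontrivial case there is a concrete surviving invariant---a Hopf invariant, a parallelizability obstruction, or (in the definite $(4,3)$ case) a direct inspection of the cohomology ring of $Sp(2)$---that distinguishes $M_+$ from $S^{l-1}\times S^{l-m-1}$ at the homotopy level.
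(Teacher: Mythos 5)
Your overall framing is right---the ``if'' direction is immediate, and the crux is to show that whenever $\eta$ is nontrivial one also has $J\chi(\eta)\neq 0$, so that Theorem~1.11 of \cite{JW54} rules out the product homotopy type. But your plan for actually establishing $J\chi(\eta)\neq 0$ has a genuine gap, and it sits exactly at the point you flag as ``the principal obstacle.'' For $m=3,5,6$ you propose a ``direct calculation in $\pi_{l-2}(SO(l-m))$'': at best this detects $\chi(\eta)\neq 0$, which is not the issue---the issue is whether the unstable $J$-homomorphism kills it. For $m\geq 7$ your proposed remedy, the non-vanishing of the \emph{stable} $J$-homomorphism from \cite{Ad66}, cannot work: since $\eta\oplus\bm{\varepsilon}^{m-1}\cong TS^{l-1}$ and $TS^{l-1}$ is stably trivial, $\eta$ is stably trivial, so the stable class of $\chi(\eta)$ is zero and its stable $J$-image vanishes identically. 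The obstruction here is intrinsically unstable (we are far outside the stable range: $\pi_{2l-m-2}S^{l-m}$ with $l-2$ exceeding $l-m-2$), and no stable invariant can see it.

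The missing idea---and the paper's single uniform argument---is to exploit naturality of $J$ under the inclusion $i\colon SO(l-m)\to SO(l-1)$ together with suspension. Since $i_*\chi(\eta)=\chi(\eta_1)$ with $\eta_1=TS^{l-1}$, and $J\chi(\eta_1)=-[\iota_{l-1},\iota_{l-1}]$, the commutative square $(\Sigma^{m-1}\circ J)(\chi(\eta))=\pm(J\circ i_*)(\chi(\eta))$ gives $\Sigma^{m-1}J\chi(\eta)=\mp[\iota_{l-1},\iota_{l-1}]$, which by Adams is nonzero unless $l=2,4,8$. Hence $J\chi(\eta)\neq 0$ for every $m$ at once whenever $l\neq 4,8$, and only the finitely many pairs $(m,k)$ with $k\delta(m)=4$ or $8$ remain, which are settled by Theorem~\ref{QTY2}. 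One smaller correction: in the definite $(4,3)$ case the integral cohomology \emph{ring} of $Sp(2)$ is in fact isomorphic to that of $S^3\times S^7$ (an exterior algebra on degree $3$ and $7$ generators); what distinguishes them is a Steenrod operation or the homotopy groups, not the cup product structure, so that step of your case analysis also needs a different invariant.
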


\begin{proof}
If $\eta$ is trivial, then it is clear that $M_+$ is homotopy equivalent (resp. homeomorphic, diffeomorphic) to $S^{l-1}\times S^{l-m-1}$.
Consequently, the ``only if" part is left to be considered. To complete the work, we will prove that if $M_+$ is homotopy equivalent to $S^{l-1}\times S^{l-m-1}$, then $\eta$ is trivial.

%We will firstly show that if $l\neq4$ and $l\neq8$,then $M_+$ is not homotopy equivalent to $S^{l-1}\times S^{l-m-1}$.
Given a symmetric Clifford system
$\{P_0,\cdots,P_m\}$ on $\mathbb{R}^{2l}$ with $l=k\delta(m)$, as written in (\ref{M_+}), we have $$M_+=\{x\in S^{2l-1}(1)~
|~\langle P_0x, x\rangle=\langle P_1x, x\rangle
=\cdots=\langle P_mx, x\rangle=0\}.$$
Define
$$M_1:=\{x\in S^{2l-1}(1)~
|~\langle P_0x, x\rangle=\langle P_1x, x\rangle
=0\}.$$
Let $\eta_1$ be the tangent bundle of $S^{l-1}$. Then $M_1=S(\eta_1)$. By virtue of Lemma \ref{bundle}, we obtain
$M_+\cong S(\eta)$ and $\eta_1\cong \eta \oplus \bm{\varepsilon}^{m-1}$, where $\bm{\varepsilon}^{m-1}$ is the trivial bundle of rank $m-1$.

According to \cite{JW54}, $M_1$ has the same homotopy type with $S^{l-1}\times S^{l-2}$ if and only if $\pi_{2l-1}S^l$ contains an
element whose Hopf invariant is unity. On the other hand, it follows from \cite{Ad60} that there is no element of Hopf invariant one in $\pi_{2l-1}S^l$ unless
$l=1,2,4$ or $8$. Equivalently, the Whitehead product $[\iota_{l-1}, \iota_{l-1}]$ is non-zero in the homotopy group $\pi_{2l-3}S^{l-1}$
unless $l=1,2,4$ or $8$. In our case, $l-m-1>0$ implies that $l\geq3$. Hence, if $l\neq4$ and $l\neq8$, then
$$J\chi(\eta_1)=-[\iota_{l-1}, \iota_{l-1}]\neq 0$$
where $J:\pi_{l-2}SO(l-1)\rightarrow \pi_{2l-3}S^{l-1}$ is the $J$-homomorphism, and $\chi(\eta_1)$ is the characteristic map of $\eta_1$.
So we obtain the following commutative diagram up to a sign (c.f. (1.2) and (1.3) of \cite{JW54})
\begin{equation*}
\begin{tikzcd}
\pi_{l-2}SO(l-m) \ar{r}{i_{*}} \ar{d}{J}& \pi_{l-2}SO(l-1) \ar{d}{J} \\
\pi_{2l-m-2}S^{l-m} \ar{r}{\Sigma^{m-1}}& \pi_{2l-3}S^{l-1}
\end{tikzcd}
\end{equation*}
where $i_{*}$ is the induced homomorphism of the canonical inclusion $i: SO(l-m)\rightarrow SO(l-1)$, and $\Sigma^{m-1}$ is the iterated suspension. Then it follows that
$$(\Sigma^{m-1}\circ J)(\chi(\eta))=\pm (J\circ i_{*})(\chi(\eta))=\pm J\chi(\eta_1)\neq 0.$$
Therefore, $M_+$ is not homotopy equivalent to $S^{l-1}\times S^{l-m-1}$ by Theorem 1.11 in the part I of \cite{JW54}.

Now we are in a position to consider the two special cases $l=4$ and $8$:

%\noindent
If $l=4$, then $4=k\delta(m)$ and $(m, k)=(1, 4)$ or $(2, 2)$, i.e. $(m_1, m_2)=(1, 2)$ or $(2,1)$.
For these two cases, by Theorem \ref{QTY2}, $\eta$ is trivial and $M_+$ is always homotopy equivalent to $S^{l-1}\times S^{l-m-1}$.

If $l=8$, then $8=k\delta(m)$ and $(m, k)=(1, 8), (2, 4), (3, 2), (4, 2), (5, 1), (6, 1)$, i.e., $(m_1, m_2)=(1, 6), (2, 5), (3, 4), (4, 3), (5, 2), (6, 1)$.  Except for the $(4, 3)$ case, it follows from Theorem \ref{QTY2} that $\eta$ is trivial and $M_+$ is homotopy equivalent to $S^{l-1}\times S^{l-m-1}$. Hence, we need only to check the case $(m_1, m_2)=(4, 3)$.
If $(m_1, m_2)=(4, 3)$ and $q=0$, then $M_+$ has the same homotopy type with $S^3 \times S^7$ and $\eta$ is trivial. If $(m_1, m_2)=(4, 3)$ and $q=2$, then $M_+$ is not homotopy equivalent to $S^3 \times S^7$ and $\eta$ is not trivial.

Now the proof of Theorem \ref{QTY3} is complete.
\end{proof}

%Combining Theorem \ref{QTY2} with Theorem \ref{QTY3}, we can conclude that the focal submanifold $M_+$ of OT-FKM type is homotopy equivalent (resp. homeomorphic, diffeomorphic) to $S^{l-1}\times S^{l-m-1}$ if and only if $(m_1, m_2)=(1, 2), (2, 1), (1, 6), (6, 1)$, $(2, 5), (5, 2), (3, 4), $ or the indefinite case of $(4, 3)$ .

\vspace{3mm}

%-------------------------------------------------------------------------------------------------------------

\subsubsection{\textbf{Homotopy, homeomorphism, and diffeomorphism types of $M$}}\hspace{2mm}\label{2.1.3}
Let $M$ be an isoparametric hypersurface of OT-FKM type with $(m_1, m_2)=(m, l-m-1)$.
% and $M_{\pm}$ be the focal submanifolds of codimension $m_1+1$ and $m_2+1$ respectively.
As asserted in Theorem 4.2 of \cite{FKM81}, the normal bundle of $M_+$ is trivial and the isoparametric hypersurface $M$ is a trivial sphere bundle over $M_+$, that is, $M$ is diffeomorphic to $M_+\times S^m$. Therefore, the topology of $M_+$ has a more direct bearing from $M$ than that of $M_-$.
Conversely, with the foreshadowing of Theorem \ref{QTY2} and \ref{QTY3}, we get the following conclusion

\vspace{2mm}

%\noindent\textbf{Theorem 1.3}.
\begin{thm}\label{QTY4}
An isoparametric hypersurface $M$ of OT-FKM type is homotopy equivalent (resp. homeomorphic, diffeomorphic) to $S^{l-1}\times S^{l-m-1}\times S^m$ if and only if $\eta$ is trivial, namely,
$(m_1, m_2)=(1, 2), (2, 1)$, $(1, 6), (6, 1), (2, 5), (5, 2), (3, 4), $ or the indefinite case of $(4, 3)$.
\end{thm}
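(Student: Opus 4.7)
The plan is to deduce Theorem \ref{QTY4} as a direct consequence of Theorems \ref{QTY2} and \ref{QTY3} together with the diffeomorphism $M\cong M_+\times S^m$ recalled at the start of Section \ref{2.1.3} (due to \cite{FKM81}). The sufficiency direction will be almost immediate, whereas the necessity direction requires a cancellation-type argument to recover the homotopy type of $M_+$ from that of $M$.

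For the $(\Leftarrow)$ direction, suppose $\eta$ is trivial. Theorem \ref{QTY3} then supplies a diffeomorphism $M_+\cong S^{l-1}\times S^{l-m-1}$; multiplying by $S^m$ and invoking $M\cong M_+\times S^m$ yields $M\cong S^{l-1}\times S^{l-m-1}\times S^m$ as smooth manifolds, which a fortiori gives the weaker homeomorphism and homotopy equivalence.

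For the $(\Rightarrow)$ direction I will proceed by contrapositive. Assume $\eta$ is non-trivial; I shall show $M$ is not homotopy equivalent to the triple product. The invariant I use is the Whitehead product $[\iota_{l-1},\iota_{l-m-1}]$, which vanishes in any triple product of spheres because the two generators lie in distinct direct factors. The proof of Theorem \ref{QTY3} already shows that non-triviality of $\eta$ forces $J\chi(\eta)\neq 0$ via the commutative diagram linking iterated suspension with the stable $J$-homomorphism and Theorem 1.11 of \cite{JW54}; this obstruction is in turn identified with a non-zero Whitehead product $[\iota_{l-1},\iota_{l-m-1}]\neq 0$ in $\pi_{2l-m-3}(M_+)$. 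Because the first projection $\mathrm{pr}_1:M=M_+\times S^m\to M_+$ is a retraction (with section $x\mapsto(x,\ast)$), the induced map on $\pi_\ast$ is a split surjection respecting Whitehead products, so the product persists as a non-zero element of $\pi_{2l-m-3}(M)$, contradicting the assumed homotopy equivalence.

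The main obstacle will be making precise the identification between the $J$-image $J\chi(\eta)$ and the Whitehead product $[\iota_{l-1},\iota_{l-m-1}]$ in $\pi_\ast(M_+)$: while $\iota_{l-m-1}$ arises naturally from the fibre inclusion, the class $\iota_{l-1}$ needs careful interpretation when the sphere bundle $S(\eta)\to S^{l-1}$ admits no section (one should invoke the cell structure of $M_+$ as a twisted product, in which the top cell is attached via a sum of Whitehead products and $J\chi(\eta)$). Once this is arranged, combining the contrapositive with the list of trivial bundles supplied by Theorem \ref{QTY2} reproduces the exact set of exceptional multiplicity pairs in the statement.
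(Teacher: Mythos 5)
Your reduction of the sufficiency direction and the idea of exploiting $M\cong M_+\times S^m$ are both fine, but the necessity argument has genuine gaps beyond the one you flag. First, the vanishing claim at its heart does not hold in the generality you need: in a product $S^{l-1}\times S^{l-m-1}\times S^m$ only Whitehead products of the \emph{canonical inclusions of distinct factors} vanish. Under an arbitrary homotopy equivalence $M\simeq S^{l-1}\times S^{l-m-1}\times S^m$ your classes $\alpha\in\pi_{l-1}(M)$ and $\beta\in\pi_{l-m-1}(M)$ go to triples $(\alpha_1,\alpha_2,\alpha_3)$, $(\beta_1,\beta_2,\beta_3)$ with components in all three factors (e.g.\ $\alpha_2\in\pi_{l-1}(S^{l-m-1})$, $\alpha_3\in\pi_{l-1}(S^m)$ are typically nonzero torsion), and $[\alpha,\beta]$ is the sum of the componentwise products $[\alpha_i,\beta_i]$, which need not vanish. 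So a nonzero Whitehead product in $\pi_{2l-m-3}(M)$ does not by itself contradict the equivalence. Second, the premise that ``non-triviality of $\eta$ forces $J\chi(\eta)\neq 0$'' is not what the proof of Theorem \ref{QTY3} establishes: for $l=4,8$, and in particular for the definite $(4,3)$ family where $M_+\cong Sp(2)$, non-triviality of $\eta$ is deduced from $Sp(2)\not\simeq S^3\times S^7$, not from a computation of $J\chi(\eta)$. Third, even where $J\chi(\eta)\neq 0$, Theorem 1.11 of \cite{JW54} is a statement about the attaching map of the top cell (equivalently, the homotopy type of the total space), not about a nonzero element of $\pi_{2l-m-3}(M_+)$; to convert it into one you would need the image of $J\chi(\eta)$ under the fibre inclusion to survive the attachment of the top cell, plus a section of $S(\eta)\to S^{l-1}$ to produce $\iota_{l-1}$, and you would still have to handle $(m_1,m_2)=(1,1)$, where $M_+=SO(3)$ is not simply connected.

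The paper takes a different and more robust route: it proves a cancellation lemma stating that if $X,Y$ are $1$-connected CW-complexes with $H^*(X;\mathbb{Z})\cong H^*(Y;\mathbb{Z})\cong H^*(S^{l-1}\times S^{l-m-1};\mathbb{Z})$ as rings and $m\neq l-m-1$, then $X\times S^m\simeq Y\times S^m$ implies $X\simeq Y$. This is done by composing a given equivalence $f$ with the slice inclusion and the projection to get $\varphi:X\to Y$, checking via the K\"unneth formula and the ring structure that $\varphi^*$ is an isomorphism on cohomology, and invoking Whitehead's theorem; the single case $m=l-m-1$ (forcing $(m_1,m_2)=(1,1)$, $M=SO(3)\times S^1$) is disposed of by hand. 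Theorem \ref{QTY4} then follows directly from Theorems \ref{QTY2} and \ref{QTY3}. If you want to salvage your approach, the cleanest fix is to replace the Whitehead-product obstruction by exactly this kind of cancellation statement, which recovers the homotopy type of $M_+$ from that of $M$ and lets Theorem \ref{QTY3} do all the work.
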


\begin{proof}
If $\eta$ is trivial, it follows clearly that $M$ is homotopy equivalent (resp. homeomorphic, diffeomorphic) to $S^{l-1}\times S^{l-m-1}\times S^m$. To finish the proof, we will show if $M$ is homotopy equivalent to $S^{l-1}\times S^{l-m-1}\times S^m$, then $\eta$ is trivial. The following fact will be used

If $X, Y$ are $1$-connected CW-complexes such that
$$H^*(X; \mathbb{Z})\cong H^*(Y; \mathbb{Z})\cong H^*(S^{l-1}\times S^{l-m-1}; \mathbb{Z})$$
with respect to the cohomology ring structures, and $m\neq l-m-1$, then
$X$ is homotopy equivalent to $Y$ (denoted by $X\simeq Y$) if and only if $X\times S^m\simeq Y \times S^m$.

For readers' convenience, a proof is given as follows. We only need to show if $X\times S^m\simeq Y \times S^m$, then $X\simeq Y$.
Let $f: X\times S^m\rightarrow Y \times S^m$ be a given homotopy equivalence. Choosing a fixed point $p_0$ in $S^m$, we get a natural inclusion
$i: X \rightarrow X\times S^m, x\mapsto (x, p_0)$. Moreover, let $\pi_{Y}: Y \times S^m \rightarrow Y, (y, p)\mapsto y$ be the projection. Now, considering the composition of maps $X \overset{i}{\rightarrow} X\times S^m \overset{f}{\rightarrow} Y \times S^m \overset{\pi_{Y}}{\rightarrow} Y$, we get a map $\varphi: X\rightarrow Y$. Let $\widetilde{\alpha}, \widetilde{\beta}$ be the generators of $H^{m}(X\times S^m; \mathbb{Z}), H^{l-m-1}(X\times S^m; \mathbb{Z})$ respectively, and $\widetilde{\gamma}$ be one of the generators of $H^{l-1}(X\times S^m; \mathbb{Z})\cong \mathbb{Z}\oplus \mathbb{Z}$ corresponding to $S^{l-1}$. Similarly, let $\alpha, \beta$ be the generators of $H^{m}(Y\times S^m; \mathbb{Z}), H^{l-m-1}(Y\times S^m; \mathbb{Z})$ respectively, and $\gamma$ be one of the generators of $H^{l-1}(Y\times S^m; \mathbb{Z})\cong \mathbb{Z}\oplus \mathbb{Z}$ corresponding to $S^{l-1}$. Then from K\"{u}nneth formula, $\widetilde{\alpha}\widetilde{\beta}$ and $\widetilde{\gamma}$
are exactly two generators of $H^{l-1}(X\times S^m; \mathbb{Z})$. Moreover, $\alpha\beta$ and $\gamma$ are exactly two generators of $H^{l-1}(Y\times S^m; \mathbb{Z})$. Let $f^{*}: H^*(Y\times S^m; \mathbb{Z})\rightarrow H^*(X\times S^m; \mathbb{Z})$ be the induced homomorphism. Since $f: X\times S^m\rightarrow Y \times S^m$ is a homotopy equivalence, it follows that $f^{*}$ is an isomorphism. It implies that $f^{*}(\alpha)=\pm \widetilde{\alpha}$ and $f^{*}(\beta)=\pm \widetilde{\beta}$. Assume $f^{*}(\gamma)=k_1\widetilde{\alpha}\widetilde{\beta}+k_2\widetilde{\gamma}$ for some integers $k_1$ and $k_2$. Since $f^{*}: H^{l-1}(Y\times S^m; \mathbb{Z})\rightarrow H^{l-1}(X\times S^m; \mathbb{Z})$ is an isomorphism and $f^{*}(\alpha\beta)=\pm \widetilde{\alpha}\widetilde{\beta}$,
it follows that $k_2=\pm 1$. Meanwhile, the induced homomorphisms of $i$ and $\pi_{Y}$ are clear. Consequently, we obtain that the induced homomorphism $\varphi^{*}: H^k(X; \mathbb{Z})\rightarrow H^k(Y; \mathbb{Z})$ is isomorphic for any integer $k$. Due to $\pi_1 X\cong \pi_1 Y\cong0$ and Whitehead's theorem, then $\varphi$ is a homotopy equivalence between $X$ and $Y$. We finish the proof of the fact above.

If $m=l-m-1>0$, then $m=l-m-1=1$. Then $M=SO(3)\times S^1$ and is not homotopy equivalent to $S^2\times S^1\times S^1$. Moreover, $\eta$ is not trivial for this case. Hence, the theorem is true for this special case. If $m\neq l-m-1$, by the fact above, $M\simeq S^{l-1}\times S^{l-m-1}\times S^m$ if and only if $M_+\simeq S^{l-1}\times S^{l-m-1}$. Thus by applying Theorem \ref{QTY2} and \ref{QTY3} for the homotopy type of $M_+$, we conclude that $M\simeq S^{l-1}\times S^{l-m-1}\times S^m$ if and only if $\eta$ is trivial, namely,
$(m_1, m_2)=(1, 2), (2, 1)$, $(1, 6), (6, 1), (2, 5), (5, 2), (3, 4), $ or the indefinite case of $(4, 3)$ .

\end{proof}

%-------------------------------------------------------------------------------------------------------------

\subsubsection{\textbf{Lusternik-Schnirelmann category}}
Let $X$ be a topological space, by definition, the Lusternik-Schnirelmann category $cat(X)$ of $X$ is the least number $m$ such that there exists a covering of $X$ by $m+1$ open subsets which are contractible in $X$. In this part, we will determine the Lusternik-Schnirelmann category for isoparametric hypersurfaces and focal submanifolds of OT-FKM type.

\vspace{2mm}

%-------------------------------------------------------------------------------------------------------------
%\noindent\textbf{Theorem 1.4}.
\begin{thm}\label{QTY5}
Let $M, M_+, M_-$ be the isoparametric hypersurface and focal submanifolds of OT-FKM type in the unit sphere, respectively.
Then
\begin{itemize}
\item [(i).] $cat(M_{-})=2$; \\
$cat(M_{+})=2$, if $(m_1, m_2)\neq (8, 7), (9, 6)$, and $M_+$ is not 
%of both focal submanifolds $M_{\pm}$ is $2$
in the following two cases
\begin{itemize}
\item[(a).] $(m_1, m_2)=(1, 1)$, $cat(M_+)=cat(SO(3))=3$; %\vspace{1mm}
\item[(b).] $(m_1, m_2)=(4, 3)$ in the definite case, $cat(M_+)=cat(Sp(2))=3$.
\end{itemize}
\vspace{2mm}

\item[(ii).] $cat(M)=3$, if $(m_1, m_2)\neq (8, 7), (9, 6)$, and $M$ is not 
in the following two cases
\begin{itemize}
\item[(a).]
$(m_1, m_2)=(1, 1)$, $cat(M)=cat(SO(3)\times S^1)=4$;
\item[(b).] $(m_1, m_2)=(4, 3)$ in the definite case, $cat(M)=cat(Sp(2)\times S^4)=4$.
\end{itemize}
\end{itemize}
\end{thm}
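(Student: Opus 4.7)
The plan has three pieces: compute $cat(M_\pm)$ using cup-length lower bounds and sphere-bundle / Lie-group upper bounds, then deduce $cat(M)$ from the product decomposition $M\cong M_+\times S^m$ of Theorem~4.2 of \cite{FKM81}.

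\textbf{Lower bounds via cup length.} By Section~2.1, $M_-$ is an $S^{l-1}$-bundle over $S^m$ and $M_+$ is an $S^{l-m-1}$-bundle over $S^{l-1}$. I would plug these into the Gysin sequence: for $M_-$, the Euler class lies in $H^{l}(S^m)=0$ (forced by $l\geq m+2$), so $H^*(M_-)\cong H^*(S^m)\otimes H^*(S^{l-1})$ as graded rings with a nontrivial mixed product, yielding $\mathbb{Z}$-cup length $2$. The same argument gives cup length $2$ for $M_+$ when $m\geq 2$; when $m=1$ and $l\neq 3$ the Euler class is $\chi(S^{l-1})\in\{0,2\}$ and the $\mathbb{Z}/2$-coefficient computation still yields cup length $2$. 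In the case $(1,1)$, $M_+\cong SO(3)\cong\mathbb{RP}^3$ has $\mathbb{Z}/2$-cup length $3$; in the definite $(4,3)$ case $M_+\cong Sp(2)$, and $cat(Sp(2))\geq 3$ is a classical result (Schweitzer, Iwase--Mimura) requiring higher-order operations beyond the cup length.

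\textbf{Upper bounds and assembly.} When $\eta$ is trivial (the seven cases of Theorem 2.2), $M_+\cong S^{l-1}\times S^{l-m-1}$ and the product inequality $cat(X\times Y)\leq cat(X)+cat(Y)$ yields $cat(M_+)\leq 2$. For the remaining generic cases (excluding $(8,7)$ and $(9,6)$), I would appeal to Ganea--Iwase-type results on sphere bundles over spheres to exhibit $M_+$ as a cone-length-$2$ space, giving $cat(M_+)\leq 2$. An analogous cone-length argument for $M_-$ yields $cat(M_-)\leq 2$ in all cases, matching the lower bound. For the Lie-group cases $(1,1)$ and $(4,3)$-definite, the matching upper bound $cat\leq 3$ is classical (e.g., via Morse functions on $SO(3)$ and $Sp(2)$ with four critical points, which exist because both groups admit perfect Morse--Bott functions coming from a maximal torus). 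Turning to the hypersurface itself, $M\cong M_+\times S^m$ combined with the product inequality gives $cat(M)\leq cat(M_+)+1$, while the K\"unneth formula makes the cup length of $M$ equal to $\mathrm{cup}(M_+)+1$, a matching lower bound that pins down $cat(M)=3$ generically and $cat(M)=4$ in sub-cases (a) and (b) of item (ii).

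\textbf{Principal obstacle.} The technically hardest step is the upper bound $cat(M_+)\leq 2$ when $\eta$ is nontrivial. A naive cover by hemisphere preimages of the base $S^{l-1}$ fails, because the fiber $S^{l-m-1}$ is generically \emph{not} null-homotopic in $M_+$: it survives in $\pi_{l-m-1}(M_+)$ since $\pi_{l-m-1}(S^{l-1})=0$, so each hemisphere preimage is homotopy equivalent to the fiber but not contractible inside $M_+$. One must instead exploit the cellular / cone-length machinery, verifying that the attaching map of the top cell compresses onto the fiber. The excluded cases $(8,7)$ and $(9,6)$ are precisely those with $l=16$, where Hopf-invariant-one classes (existing only in dimensions $\{2,4,8,16\}$) obstruct compressing the cone length from $3$ to $2$, and no alternative construction is presently available.
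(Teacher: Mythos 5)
Your skeleton (cup-length lower bounds, product decomposition $M\cong M_+\times S^m$, special treatment of $SO(3)$ and $Sp(2)$) matches the paper, but there are two genuine gaps. First, the upper bound $cat(M_+)\leq 2$ when $\eta$ is nontrivial is exactly the step you defer to unspecified ``Ganea--Iwase-type results,'' and the tool you gesture at does not apply: Iwase's sphere-bundle theorem (Fact 2.2 of \cite{Iw03}, which the paper uses for $M_-$) requires the fiber dimension to exceed the base dimension minus one, which holds for $M_-$ (an $S^{l-1}$-bundle over $S^m$ with $l-1>m$) but fails for $M_+$ (an $S^{l-m-1}$-bundle over $S^{l-1}$). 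The paper's actual argument is different and elementary: Münzner's cohomology computation plus Hurewicz shows $M_+$ is $(l-m-2)$-connected, so James' inequality $cat\leq \dim/(\mathrm{conn}+1)$ gives $cat(M_+)\leq \frac{2l-m-2}{l-m-1}=2+\frac{m}{l-m-1}<3$ precisely when $m<l-m-1$; the leftover cases with $m\geq l-m-1$ either have trivial $\eta$ (and are congruent to swapped families, reducing to the $M_-$ computation), are the $SO(3)$ and $Sp(2)$ exceptions, or are the excluded $(8,7),(9,6)$. This also corrects your diagnosis of why $(8,7)$ and $(9,6)$ are excluded: it is not a Hopf-invariant-one phenomenon in dimension $16$, but simply that $m\geq l-m-1$ there, so the James bound only yields $cat\leq 3$ and no congruence trick is available.

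Second, your lower bound in case (ii)(b) is wrong: the cup length of $S^4\times Sp(2)$ is $1+2=3$, not $4$, because $H^*(Sp(2))=\Lambda(x_3,x_7)$ has cup length $2$ even though $cat(Sp(2))=3$ (that discrepancy is the whole point of Schweitzer's secondary-operation argument). So Künneth cannot ``pin down'' $cat(M)=4$ there; one needs Strom's theorem verifying Ganea's conjecture for $Sp(2)\times S^n$ (Theorem 3.2 of \cite{St99}), which is what the paper invokes. Your case (ii)(a) is fine since $\mathrm{cup}_{\mathbb{Z}_2}(SO(3)\times S^1)=4=\dim$, and the rest of your outline (cup-length $2$ lower bounds via Poincaré duality, the product upper bound $cat(M)\leq cat(M_+)+1$, and the Morse-theoretic upper bounds for the Lie-group cases) is sound.
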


\begin{proof}
We start with the proof for part (i).

As we mentioned before, the focal submanifold $M_-$ of OT-FKM type is an $S^{l-1}$-bundle over $S^m$. Since $m_1=m>0$ and $m_2=l-m-1>0$, comparing with the assumption in Fact 2.2 of \cite{Iw03}, one has $r:=l-1\geq 2$ and $t:=m-1<r$. Thus $cat(M_-)=2$.

As for $M_+$ of OT-FKM type, we will firstly prove the following

\vspace{2mm}
\noindent
\textbf{Fact}: If $1<m<l-m-1$, then $cat(M_+)=2$.
\vspace{2mm}

Due to \cite{Mun81}, we have
$$H^{k}(M_+; \mathbb{Z})\cong\left\{\begin{array}{ll} \mathbb{Z},\,\,\, \emph{if} ~k=0, l-m-1, l-1~ \emph{or}~ 2l-m-2, \\  0, \,\,\, \emph{otherwise.}
\end{array}\right.$$
By Lemma \ref{bundle} and the long exact sequence of homotopy groups, $\pi_1(M_+)=0$. Then the universal coefficient theorem and Hurewicz theorem induce 
$$\pi_{i}(M_+)\cong0,~\,\,\,\emph{for}\,~i<l-m-1,\quad\emph{and}\quad
\pi_{l-m-1}(M_+)\cong \mathbb{Z}.$$
%$$\left\{\begin{array}{ll}\pi_{i}(M_+)\cong0,~\,\,\,\emph{for}~i<l-m-1\\
%\pi_{l-m-1}(M_+)\cong \mathbb{Z}.\end{array}\right.$$
Therefore, by virtue of Proposition 5.1 in \cite{Ja78}, we obtain
$$cat(M_+)\leq \frac{2l-m-2}{l-m-1}<3.$$
On the other hand, as asserted in \cite{DFN90},
$$cat(M_+)\geq cuplength(M_+)=2,$$
where cuplength$(M_+)$ is the cohomological length of $M_+$. Hence $cat(M_+)=2$ if $1<m<l-m-1$.

So we are left to consider the cases for $m=1$ and $m\geq l-m-1$.
When $m=1$, recalling the expression of $M_+$ in (\ref{M_+}), it is easily seen that
$M_+$ is diffeomorphic to the Stiefel manifold $V_2(\mathbb{R}^l)\cong SO(l)/SO(l-2)$. Thus by \cite{Ni07} and \cite{DFN90}, we obtain that
$$cat(M_+)=\left\{\begin{array}{ll}cat(V_2(\mathbb{R}^l))=2,\,\,~~  \emph{if}~ l\geq4,\vspace{2mm}\\
cat(SO(3))=3, \,\,~~  \emph{if}~ l=3.
\end{array}
\right.$$

When $m\geq l-m-1$, by the assumption that $(m_1, m_2)\neq(8, 7)$ or $(9, 6)$, there only exist the following cases:
$$(m_1, m_2)=(1, 1), (2, 1), (4, 3), (5, 2), (6, 1).$$
We have dealt with the case $(m_1, m_2)=(1, 1)$ in the case $m=1$. As pointed out by \cite{FKM81}, the isoparametric families of OT-FKM type with multiplicities $(2, 1), (6, 1), (5, 2)$ and the indefinite case of $(4, 3)$ are congruent to those with multiplicities $(1, 2), (1, 6), (2, 5)$ and $(3, 4)$, and the focal submanifolds $M_{\pm}$ coincide with the corresponding $M_{\mp}$. Thus by the previous result on $cat(M_-)$, we get $cat(M_+)=2$ if $(m_1, m_2)=(2, 1), (6, 1), (5, 2)$ and the indefinite case of $(4, 3)$. %Now we are left to check $cat(M_+)$ in the definite case of $(4, 3)$, $(8, 7)$ and $(9, 6)$.
For the definite case of $(4, 3)$, we know that $M_+$ is diffeomorphic to $Sp(2)$ (c.f. \cite{QTY13}, \cite{TY15}), thus $cat(M_+)=cat(Sp(2))=3$ (c.f. \cite{Sc65}). The proof of part (i) is complete.

Next, we continue to prove part (ii).

As we mentioned in the beginning of subsection \ref{2.1.3}, $M\cong M_+\times S^m$.
Then $cat(M)=cat(M_+\times S^m)$, which is equal to $cat(M_+)$ or $cat(M_+)+1$ (c.f. \cite{Iw03}). Furthermore, according to \cite{Mun81}, 
$$H^{*}(M; R)\cong H^*(S^{l-1}\times S^{l-m-1}\times S^m; R),$$ 
where $R=\mathbb{Z}$ if both of focal submanifolds are orientable, and $R=\mathbb{Z}_2$ otherwise. Then it follows from \cite{DFN90} that $cat(M)\geq cuplength(M)=3$.

If $M$ is neither in the case $(m_1, m_2)=(1,1)$, nor in the definite case of $(4, 3)$, then $cat(M_+)=2$, as we proved in part (i), which implies that $cat(M)\leq cat(M_+)+1=3$. Hence $cat(M)=3$.

If $(m_1, m_2)=(1, 1)$, then $M\cong S^1\times SO(3)$. Considering $SO(3)$ as the total space of $S^1$-bundle over $S^2$, by Fact 2.1 in \cite{Iw03}, $t=r=1$ and $\alpha=\pm 2$, we have $cat(M)=cat(S^1\times SO(3))=4$.

For the definite case of $(4, 3)$, $M\cong S^4\times Sp(2)$. Thus $cat(M)=4$ by Theorem 3.2 of \cite{St99}.
\end{proof}
\vspace{4mm}

\subsubsection{\textbf{Parallelizability of the isoparametric family of OT-FKM type}}
Recall that a smooth manifold $X$ is said to be \emph{parallelizable} if its tangent bundle $TX$ is trivial, and $X$ is said to be \emph{s-parallelizable} if the
Whitney sum of its tangent bundle $TX$ and a trivial line bundle is trivial. Obviously, a parallelizable manifold is s-parallelizable. For the parallelizability of the isoparametric family of OT-FKM type, we have the following
\vspace{2mm}

%\noindent\textbf{Theorem 1.5} %(Theorem \ref{s-parallel}).
\begin{thm}\label{s-parallel}
Let $M, M_+, M_-$ be the isoparametric hypersurface and focal submanifolds of OT-FKM type. Then
\begin{itemize}
\item[(i).] $M_-$ is parallelizable if and only if $M_-$ is s-parallelizable, if and only if
$\xi$ is trivial;
%\vspace{2mm}

\item[(ii).] $M_+$ is parallelizable.
%\vspace{2mm}

\item[(iii).] $M$ is parallelizable.
\end{itemize}
\end{thm}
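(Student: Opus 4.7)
The plan is to deduce all three parts from the Bredon--Kosinski criterion: a closed connected orientable $n$-manifold $N$ with stably trivial tangent bundle is parallelizable if and only if $\chi(N)=0$. Granted this, it suffices to exhibit stable parallelizations and verify $\chi=0$ in each case.

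For the stable parallelizations I would compute normal bundles in $\R^{2l}$. The hypersurface $M$ has a rank-$1$ trivial normal bundle in $S^{2l-1}$ spanned by $\nabla f/|\nabla f|$, so $TM\oplus\varepsilon^{2}\cong\varepsilon^{2l}$. For $M_{+}$, the Clifford relations exhibit $\{x,P_{0}x,\ldots,P_{m}x\}$ as a global orthonormal frame of the normal bundle of $M_{+}$ in $\R^{2l}$: the gradients of the defining equations $\langle P_{\alpha}x,x\rangle=0$ are $2P_{\alpha}x$, while $\langle P_{\alpha}x,P_{\beta}x\rangle=\langle P_{\beta}P_{\alpha}x,x\rangle=0$ for $\alpha\neq\beta$ since $P_{\beta}P_{\alpha}$ is skew-symmetric. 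Hence $TM_{+}\oplus\varepsilon^{m+2}\cong\varepsilon^{2l}$. For $M_{-}=S(\xi)$ over $S^{m}$, restricting the standard identity $TD(\xi)\cong\pi^{*}TS^{m}\oplus\pi^{*}\xi$ to the unit sphere bundle yields
\begin{equation*}
TM_{-}\oplus\varepsilon^{2}\;\cong\;\varepsilon^{m+1}\oplus\pi^{*}\xi,
\end{equation*}
so triviality of $\xi$ forces $s$-parallelizability of $M_{-}$.

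For the Euler characteristics, M\"unzner's computation produces $\chi(M_{\pm})=(1+(-1)^{m})(1+(-1)^{l-1})$, and $\chi(M)=\chi(S^{l-1})\chi(S^{l-m-1})\chi(S^{m})$ via $H^{*}(M;R)\cong H^{*}(S^{l-1}\times S^{l-m-1}\times S^{m};R)$. Since $l=k\delta(m)$ and $\delta(m)$ is even for every $m\geq 2$: when $m$ is even, $l$ is even and $l-1$ is odd; when $m$ is odd, the first factor of $(1+(-1)^{m})(1+(-1)^{l-1})$ is already zero. In every case $\chi(M_{\pm})=0$, and the same parity analysis shows at least one of $l-1,\,l-m-1,\,m$ is odd, whence $\chi(M)=0$. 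Combined with orientability (read off the stable trivializations), Bredon--Kosinski immediately delivers (ii), (iii), and the direction $\xi$ trivial $\Rightarrow M_{-}$ parallelizable of (i); the last from $M_{-}\cong S^{m}\times S^{l-1}$, for which the criteria in Theorem~\ref{QTY1} together with the $\delta(m)$ parity observation ensure at least one of $m,\,l-1$ is odd.

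The remaining and central step of (i) is $M_{-}$ $s$-parallelizable $\Rightarrow \xi$ trivial. From the displayed formula, $s$-parallelizability yields $\pi^{*}\xi\oplus\varepsilon^{m+1}\cong\varepsilon^{m+l+1}$, so $\pi^{*}\xi$ is stably trivial on $M_{-}$. Since $l\geq m+2>m=\dim S^{m}$, the Euler class $e(\xi)\in H^{l}(S^{m})=0$, and $\xi$ admits a nowhere zero section which lifts to a section $\widetilde{s}\colon S^{m}\to S(\xi)=M_{-}$ of $\pi$. Pulling back along $\widetilde{s}$ gives $\xi\cong\widetilde{s}^{*}\pi^{*}\xi$ stably trivial over $S^{m}$, and in the stable range $l>m$ the map $\pi_{m-1}O(l)\to\pi_{m-1}O$ is an isomorphism, so stable triviality promotes to triviality. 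The main technical hurdle of the plan is the appeal to Bredon--Kosinski, whose proof runs on obstruction theory: the non-vanishing vector field extracted from $\chi=0$ kills the primary obstruction, while the stable trivialization of $TN\oplus\varepsilon^{1}$ controls the higher obstructions; I would either cite this classical result or reproduce its short obstruction-theoretic argument.
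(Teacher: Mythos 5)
Your stable parallelizations are all correct and essentially coincide with the paper's: the frame $\{x,P_0x,\ldots,P_mx\}$ trivializes the normal bundle of $M_+$ in $\R^{2l}$, the identity $TM_-\oplus\bm{\varepsilon}^2\cong\bm{\varepsilon}^{m+1}\oplus\pi^*\xi$ is right, and your proof that $M_-$ s-parallelizable $\Rightarrow\xi$ trivial (section of $\pi$ from $l\geq m+2$, pull back, destabilize in the range $l>m$) is exactly the paper's argument phrased via $\widetilde{KO}$-injectivity of $\pi^*$. The problem is the tool you use to pass from s-parallelizable to parallelizable. The criterion you attribute to Bredon--Kosinski --- a closed connected s-parallelizable $n$-manifold is parallelizable iff $\chi=0$ --- is \emph{false} for $n$ odd: $S^5$ is s-parallelizable with $\chi=0$ but is not parallelizable. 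What Bredon--Kosinski actually prove is the dichotomy that a closed connected $\pi$-manifold is either parallelizable or has span equal to $\mathrm{span}(S^n)$; since $\mathrm{span}(S^n)=0$ only for $n$ even, the ``$\chi=0\Rightarrow$ parallelizable'' conclusion is available only in even dimensions, where it amounts to the vanishing of the Euler class being the sole obstruction to destabilizing a stably trivial rank-$n$ bundle over an $n$-complex. In odd dimensions the residual obstruction is a $\mathbb{Z}_2$-valued secondary invariant (tied to the Kervaire semi-characteristic, not to $\chi$), and your argument says nothing about it.

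This gap genuinely affects parts (i) and (ii): $\dim M_+=2l-m-2$ is odd whenever $m$ is odd (e.g.\ $(m_1,m_2)=(1,2),(3,4),(5,2)$), and the target $S^m\times S^{l-1}$ for part (i) is odd-dimensional whenever $m$ is even (e.g.\ $S^2\times S^3$ for $(2,1)$), so precisely the cases your criterion cannot reach do occur. Part (iii) does survive your route, since $\dim M=2l-2$ is always even and $\chi(M)=0$. The paper closes the odd-dimensional cases by citing Sutherland's Theorem 1.3, which is tailored to total spaces of sphere bundles over spheres and upgrades s-parallelizability to parallelizability there (using $l=k\delta(m)$ even for $m>1$, resp.\ the structure of $\eta\subset TS^{l-1}$ for $M_+$); alternatively, for $S^a\times S^b$ with $b$ odd one can argue directly that $T(S^a\times S^b)\cong\bm{\varepsilon}^{a+1}\oplus p_2^*\zeta$ with $\zeta$ the quotient of $TS^b$ by a nonvanishing field, and destabilize over $S^b$. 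As written, though, the central step of your plan rests on a false statement and does not prove (i) or (ii).
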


\begin{proof}
Firstly, we will prove part (i) for $M_-$.

As we mentioned in subsection \ref{M_-}, $M_-$ is an $S^{l-1}$-bundle over $S^m$ with the projection $\pi: M_-\rightarrow S^m$. Let $\xi$ be the associated vector bundle of rank $l$ over $S^m$. Then $M_-$ is diffeomorphic to $S(\xi)$.

If $m=1$ and $l=k$ is even, then $M_-$ is diffeomorphic to $S^{1}\times S^{l-1}$ by Theorem \ref{QTY1}, and thus parallelizable. If $m=1$ and $l=k$ is odd,
then $M_-$ is not orientable, not parallelizable and even not s-parallelizable.

From now on, we focus on the cases for $m>1$. By the Theorem 1.1 in \cite{Su64}, $M_-$ is s-parallelizable if and only if $\pi^{*}\xi$ represents a zero element in $\widetilde{KO}(M_-)$. Since $l>m+1$, there exists a nowhere zero section of $\xi$, equivalently speaking, there exists a map $s: S^m\rightarrow M_-$ such that $\pi \circ s=Id_{S^m}$. Then it follows that the induced homomorphism
$$\pi^{*}: \widetilde{KO}(S^m)\rightarrow\widetilde{KO}(M_-)$$
is injective. That is to say, $M_-$ is s-parallelizable if and only if $\pi^{*}\xi$ is stably trivial, if and only if $\xi$ is stably trivial.

On the other hand, since $l\geq m+2$, $\xi$ is stably trivial if and only if $\xi$ is trivial. Hence, $M_-$ is s-parallelizable if and only if $\xi$ is trivial. Furthermore, by Theorem 1.3 in \cite{Su64} and the fact that $l=k\delta(m)$ is even when $m>1$, we get $M_-$ is s-parallelizable if and only if $M_-$ is parallelizable. The proof of part (i) is now complete.

Next, we continue to prove part (ii) for $M_+$.

By Lemma \ref{bundle}, $M_+\cong S(\eta)$. Clearly, $\eta$ is stably trivial as a subbundle of $TS^{l-1}$. Then $M_+$ is s-parallelizable by Theorem 1.1 in \cite{Su64}, and thus parallelizable by Theorem 1.3 in \cite{Su64}.

Consequently,  $M\cong M_+\times S^m$ is also parallelizable. We now complete the proof of Theorem \ref{s-parallel}.
\end{proof}

%\begin{rem}
%We have revealed the homotopy (resp. homeomorphic, diffeomorphic) type of $M_-$ when $\xi$ is trivial in Theorem \ref{QTY1}.
%\end{rem}

For an isoparametric family of OT-FKM type, although the normal bundle of $M_+$ in $S^{2l-1}$ is always trivial, it is still a natural problem to determine when the normal
bundle of $M_-$ in $S^{2l-1}$ is trivial. As an application of Theorem \ref{s-parallel}, we obtain

\begin{thm}\label{normal bundle of M-}
Given an isoparametric family of OT-FKM type, the normal bundle of $M_-$ in $S^{2l-1}$ is trivial if and only if
$(m_1, m_2)=(1, 2), (2, 1), (1, 6), (6, 1),$\\
$(2, 5), (5, 2), (3, 4),$
or the indefinite case of $(4, 3).$
\end{thm}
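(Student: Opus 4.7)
The plan is to recast the triviality of $\nu(M_-,S^{2l-1})$ as a product splitting of the isoparametric hypersurface $M$, and then to invoke the diffeomorphism classification of $M$ already achieved in Theorem \ref{QTY4}.

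The starting observation is that a tubular neighborhood of $M_-$ in $S^{2l-1}$ is swept out by the level sets $f^{-1}(t)$ for $t$ close to $-1$, so each isoparametric hypersurface $M$ in the family is canonically diffeomorphic to the unit sphere bundle $S(\nu(M_-,S^{2l-1}))$. In particular, $\nu(M_-,S^{2l-1})$ is trivial if and only if $M$ splits as the trivial sphere bundle $M_-\times S^{m_2}$ over $M_-$.

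For the necessity direction, suppose $\nu(M_-,S^{2l-1})$ is trivial. The Whitney sum identity $\nu\oplus TM_-\oplus\varepsilon^1=\varepsilon^{2l}$ on $M_-$ forces $TM_-\oplus\varepsilon^1$ to be stably trivial; since its rank $l+m$ strictly exceeds $\dim M_-=l+m-1$, this bundle is in fact trivial, so $M_-$ is s-parallelizable. Theorem \ref{s-parallel}(i) then gives that $\xi$ is trivial, and Theorem \ref{QTY1} identifies $M_-$ smoothly with $S^m\times S^{l-1}$. Combining this with the splitting $M\cong M_-\times S^{m_2}$ above we obtain
$$
M\;\cong\;S^{l-1}\times S^{l-m-1}\times S^m,
$$
so Theorem \ref{QTY4} restricts $(m_1,m_2)$ to exactly the pairs listed in the statement.

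For the sufficiency direction, we exploit the congruence observation recorded in the proof of Theorem \ref{QTY5}: the OT-FKM families with multiplicities $(2,1)$, $(6,1)$, $(5,2)$ and the indefinite case of $(4,3)$ are congruent to those with multiplicities $(1,2)$, $(1,6)$, $(2,5)$ and $(3,4)$ respectively, with the roles of $M_+$ and $M_-$ interchanged across each congruence. Since the normal bundle of $M_+$ in $S^{2l-1}$ is always trivial by \cite{FKM81}, this transport of structure immediately delivers triviality of $\nu(M_-,S^{2l-1})$ for every multiplicity pair in the list.

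The main obstacle is the necessity direction: Theorem \ref{s-parallel} alone only constrains $\xi$, and many non-listed multiplicity pairs (such as $(3,4k-4)$ for $k\geq 3$, where $\xi$ is automatically trivial) slip past that weaker test. The sharper conclusion is obtained only by feeding the product splitting $M\cong M_-\times S^{m_2}$ into the full diffeomorphism classification of Theorem \ref{QTY4}, which is what converts the coarse information that $\xi$ is trivial into the precise list of multiplicities.
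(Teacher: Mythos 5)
Your proposal is correct and follows essentially the same route as the paper: from triviality of $\nu(M_-,S^{2l-1})$ you deduce (via Theorem \ref{s-parallel}) that $\xi$ is trivial, combine $M\cong M_-\times S^{l-m-1}$ with $M_-\cong S^m\times S^{l-1}$ to get $M\cong S^{l-1}\times S^{l-m-1}\times S^m$, invoke the classification of Theorem \ref{QTY4} (the paper phrases this via Theorems \ref{QTY2} and \ref{QTY3}, which is the same content), and settle sufficiency by the congruences with the $M_+$ of the partner families. Your closing remark that the stable/$\xi$-level test alone is too coarse and that the product splitting fed into Theorem \ref{QTY4} is the decisive step accurately reflects the structure of the paper's argument as well.
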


\begin{proof}
Let $\nu M_-$ be the normal bundle of $M_-$ in $S^{2l-1}$ so that
$$TM_-\oplus\nu M_-=TS^{2l-1}|_{M_-}.$$
Since $TS^{2l-1}$ is stably trivial,
$\nu M_-$ is stably trivial if and only if $TM_-$ is stably trivial. Moreover, by Theorem \ref{s-parallel}, $TM_-$ is stably trivial if and only if
$\xi$ is trivial. Thus, $\nu M_-$ is stably trivial if and only if $\xi$ is trivial.

Now we assume that $\nu M_-$ is trivial, which implies the triviality of $\xi$. Then it follows that
$$M\cong S^{l-m-1}\times M_-\cong S^{l-m-1}\times S^m\times S^{l-1},$$
which implies further that $\eta$ is trivial by Theorem \ref{QTY3}. Therefore, if $\nu M_-$ is trivial, then  $(m_1, m_2)=(1, 2), (2, 1),
(1, 6), (6, 1), (2, 5), (5, 2), (3, 4),$ or the indefinite case of $(4, 3)$ by Theorem \ref{QTY2}.

Conversely, according to \cite{FKM81}, the families with multiplicities $(2, 1), (6, 1)$, $(5, 2)$ are congruent to those with multiplicities $(1, 2), (1, 6), (2, 5)$. Moreover, the indefinite family with multiplicities $(4, 3)$ is congruent to the family with multiplicities $(3, 4)$. Thus the focal submanifolds $M_{\pm}$ are congruent to corresponding $M_{\mp}$. Recalling that the normal bundle of $M_+$ in $S^{2l-1}$ is always trivial, we finish the proof.
\end{proof}

\vspace{3mm}

\subsection{Homogeneous case}
In this subsection, we study the topology of homogeneous isoparametric hypersurfaces and focal submanifolds. It is well known that a homogeneous (isoparametric) hypersurface in the unit sphere can be characterized as a principal orbit of the isotropy representation of some rank two symmetric space $G/K$, while focal submanifolds correspond to the singular orbits
(c.f. \cite{HL71}, \cite{TT72}).

\subsubsection{\textbf{The case with $(g, m_1, m_2)=(4, 2, 2)$.}}
Consider the lie algebra $so(5, \mathbb{R})$ and the adjoint representation of $SO(5)$ on it.
Then the principal orbits of this action constitute the homogeneous 1-parameter family
of isoparametric hypersurfaces in $S^9$ with $(g, m_1,
m_2)=(4, 2, 2)$. Each isoparametric hypersurface $M$ is diffeomorphic to $SO(5)/T^2$, and the two
focal submanifolds $M_{\pm}$ are diffeomorphic to $\mathbb{C}P^3$ and the oriented Grassmann manifold $\widetilde{G}_2(\mathbb{R}^5)$, respectively (c.f. \cite{QTY13}).

\begin{thm} \label{QTY6}

$\mathrm{(i)}.$ For the focal submanifold $M_+$ with $(g, m_1, m_2)=(4, 2, 2)$, which is diffeomorphic to $\mathbb{C}P^3$,%\vspace{1mm}

\begin{itemize}
\item[(a).] $M_+$ is an $S^2$-bundle over $S^4$;%\vspace{1mm}

\item[(b).] The cohomology ring $H^{*}(M_+; \mathbb{Z})$ is not isomorphic to $H^*(S^2\times S^4; \mathbb{Z})$;%\vspace{1mm}

\item[(c).] $cat(M_+)=3$;%\vspace{1mm}

\item[(d).] $M_+$ is not s-parallelizable.\vspace{1mm}
\end{itemize}

$\mathrm{(ii)}.$ For the focal submanifold $M_-$ with $(g, m_1, m_2)=(4, 2, 2)$, which is diffeomorphic to the oriented Grassmann manifold $\widetilde{G}_2(\mathbb{R}^5)$,%\vspace{1mm}

 \begin{itemize}
\item[(a).] $M_-$ is not an $S^p$-bundle over $S^q$ for any positive integers $p$ and $q$;%\vspace{2mm}
\item[(b).] The cohomology ring $H^*(M_-; \mathbb{Z})$ is not isomorphic to $H^*(S^2\times S^4; \mathbb{Z})$;%\vspace{2mm}

\item[(c).] $cat(M_-)=3$;%\vspace{2mm}
\item[(d).] $M_-$ is not s-parallelizable.

\end{itemize}
\end{thm}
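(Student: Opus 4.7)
The overall strategy is to exploit the classical identifications $M_+ \cong \mathbb{C}P^3$ and $M_- \cong \widetilde{G}_2(\R^5) \cong Q_3$ (the smooth complex quadric threefold in $\mathbb{C}P^4$, via $e_1+ie_2 \mapsto [e_1+ie_2]$) and to reduce each of the eight assertions to a cohomology ring or characteristic class computation.

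For (i) on $M_+ \cong \mathbb{C}P^3$: assertion (a) is given by the twistor fibration $\mathbb{C}P^3 \to \mathbb{H}P^1 \cong S^4$ with fibre $\mathbb{C}P^1 \cong S^2$, coming from $\mathbb{C}^4 \cong \mathbb{H}^2$. For (b), the ring $H^*(\mathbb{C}P^3;\mathbb{Z}) \cong \mathbb{Z}[x]/(x^4)$ has its degree-$2$ generator squaring nontrivially, while in $H^*(S^2\times S^4;\mathbb{Z}) \cong \mathbb{Z}[u,v]/(u^2,v^2)$ the degree-$2$ generator squares to zero, so no ring isomorphism is possible. Part (c) is the classical $cat(\mathbb{C}P^n)=n$: the lower bound follows from $x^3 \neq 0$ and the upper bound from the standard cover by four affine charts. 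For (d), from $c(T\mathbb{C}P^3)=(1+x)^4$ one computes $p_1(T\mathbb{C}P^3)=4x^2 \neq 0$ in $H^4(\mathbb{C}P^3)\cong\mathbb{Z}$; since Pontryagin classes are stable invariants, this rules out s-parallelizability.

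For (ii) on $M_- \cong Q_3$, I would first nail down the cohomology ring, which handles (b), (c) and (d) in one stroke. Using the hyperplane class $h \in H^2(Q_3)$ and the Poincar\'e dual $v \in H^4(Q_3)$ of a line in $Q_3$, the identities $h\cdot v = [\mathrm{pt}]^*$ and $h^3 = \deg(Q_3)\cdot [\mathrm{pt}]^* = 2[\mathrm{pt}]^*$ force $h^2 = 2v$, so $H^*(Q_3;\mathbb{Z}) \cong \mathbb{Z}[h,v]/(h^2-2v,\ v^2)$. The nonzero relation $h^2 = 2v$ distinguishes this from $H^*(S^2\times S^4;\mathbb{Z})$, settling (b). For (c), $h^3 = 2hv \neq 0$ gives cup-length $\geq 3$, while the standard James/Ganea bound $cat(X) \leq \dim X / (\mathrm{conn}(X)+1)$ applied to the simply connected $6$-manifold $M_-$ (simple connectivity follows from the long exact sequence of $SO(3)\times SO(2) \hookrightarrow SO(5) \to M_-$, or from Lefschetz for $Q_3 \subset \mathbb{C}P^4$) gives $\leq 3$. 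For (d), the normal bundle sequence for $Q_3 \subset \mathbb{C}P^4$ yields $c(TQ_3) = (1+h)^5/(1+2h)$, whence $c_1(TQ_3) = 3h$ and $w_2(TM_-) = h \bmod 2 \neq 0$, so $M_-$ is not s-parallelizable.

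The main obstacle is (ii)(a), that $M_-$ carries no structure of sphere bundle over a sphere. I would run a case analysis on $(p,q)$ with $p+q = \dim M_- = 6$. The cases $(1,5)$ and $(5,1)$ are excluded because $M_-$ is simply connected; $(3,3)$ because $H^2(M_-)\neq 0$; and $(4,2)$ because in an $S^4$-bundle over $S^2$ the Gysin sequence forces the $H^2$-generator to pull back from the base and so to square to zero, contradicting $h^2 = 2v \neq 0$. The delicate case is $(p,q)=(2,4)$: oriented $S^2$-bundles over $S^4$ are classified by $\pi_3(SO(3))\cong\mathbb{Z}$ and realize every value $k\in\mathbb{Z}$ in the ring $H^*(E;\mathbb{Z}) \cong \mathbb{Z}[a,b]/(a^2-kb,\ b^2)$, including $k=2$, so the ring alone cannot decide. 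Here I would invoke $w_2$: decomposing $TE = \pi^*TS^4 \oplus T^vE$, where $T^vE$ is rank-$2$ oriented with Euler class $2a$ (restricting fibrewise to $e(TS^2)=2$), one gets $w_2(TE) = w_2(T^vE) = 2a \bmod 2 = 0$, contradicting $w_2(TM_-) \neq 0$ computed above. This closes the final case and proves (ii)(a).
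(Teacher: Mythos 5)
Your proof is correct, and in several places it takes a genuinely different and more self-contained route than the paper. For part (i) the paper also invokes the twistor fibration $\mathbb{C}P^3\to S^4$ for (a), but then declares (b) and (d) ``clear'' and gets (c) from Berstein's theorem that a compact simply connected K\"ahler manifold has category equal to its complex dimension; your cup-length/affine-chart argument for (c) and your computation $p_1(T\mathbb{C}P^3)=4x^2\neq 0$ for (d) are honest elementary substitutes (and the $p_1$ argument is genuinely needed, since $w_2(\mathbb{C}P^3)=0$ cannot detect the failure of s-parallelizability). For part (ii) the paper works only with M\"unzner's additive cohomology plus external citations: (b) is quoted from Tang's earlier paper, (d) and the inequality $w_2(TM_-)\neq 0$ from Miatello--Miatello, and (c) again from Berstein via the Hermitian symmetric structure on $\widetilde{G}_2(\mathbb{R}^5)$; your identification with the quadric $Q_3$ and the explicit ring $\mathbb{Z}[h,v]/(h^2-2v,\,v^2)$ derives all of these in one stroke, with $c_1(TQ_3)=3h$ recovering $w_2\neq 0$ directly. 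For (ii)(a) the two proofs agree on the decisive $S^2$-over-$S^4$ case (both reduce it to the vanishing of $w_2$ for such a total space, you via the vertical tangent bundle with Euler class $2a$, the paper via the pulled-back rank-$3$ bundle), but differ on the $S^4$-over-$S^2$ case: the paper computes $\pi_3(M_-)\cong\mathbb{Z}_2$ from the Stiefel fibration $S^1\hookrightarrow V_2(\mathbb{R}^5)\to\widetilde{G}_2(\mathbb{R}^5)$ and contrasts it with $\pi_3\cong\mathbb{Z}$ for such a bundle, whereas you observe that the Gysin/Serre sequence forces the degree-$2$ generator to pull back from the base and hence square to zero, contradicting $h^2=2v\neq 0$. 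Your version trades homotopy groups for the ring structure you have already established, which makes the whole of part (ii) hang together from a single computation; the paper's version avoids committing to the quadric model but at the cost of outsourcing the ring and characteristic class facts to the literature.
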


\begin{proof}
Firstly, note that $M_+\cong\mathbb{C}P^3$ is the total space of the twistor bundle of almost complex structures on $S^4$ with fiber $\mathbb{C}P^1$. It is clear that (a), (b) and (d) of part (i) are valid. Moreover, since $M_+$ admits a K\"{a}hler metric and $\pi_1(M_+)=0$, one has $cat(M_+)=\mathrm{dim}_{\mathbb{C}}M_+=3$ (c.f. \cite{Be76}), which verifies (c) of part (i).

Next, we continue to prove part (ii). According to \cite{Mun81},
$$H^{k}(M_-; \mathbb{Z})\cong \left\{\begin{array}{ll}\mathbb{Z},\,\, \emph{if}\,\, k=0, 2, 4, 6\\
0, \,\, \emph{ otherwise}.
\end{array}\right.$$
Consequently, if $M_-$ is the total space of a sphere bundle over a sphere, then the only possible cases are $S^4$-bundle over $S^2$ or $S^2$-bundle over $S^4$.

Suppose $M_-$ is an $S^4$-bundle over $S^2$.
Considering the fiber bundle $S^1\hookrightarrow V_2(\mathbb{R}^5)\rightarrow \widetilde{G}_2(\mathbb{R}^5)$, by virtue of the homotopy exact sequence of a fiber bundle, one obtains that $\pi_3(M_-)\cong\pi_3(V_2(\mathbb{R}^5)\cong\mathbb{Z}_2$ (c.f. \cite{St51}). On the other hand, by assumption, there exists an exact sequence
$$\cdots\rightarrow\pi_3S^4 \rightarrow \pi_3 M_-\rightarrow \pi_3 S^2 \rightarrow\pi_2 S^4\rightarrow\cdots$$
which leads to $\pi_3 M_-\cong\pi_3 S^2\cong \mathbb{Z}$. Clearly, this contradicts $\pi_3(M_-)\cong\mathbb{Z}_2$.

Now we consider the other remaining possible case. Suppose that $M_-$ is an $S^2$-bundle over $S^4$ with projection $\pi$, and let $\xi$ be the associated vector bundle of rank $3$ over $S^4$. Then
$$TM_-\oplus \bm{\varepsilon}\cong \pi^{*}TS^4\oplus \pi^*\xi.$$
It follows that the total Stiefel-Whitney class $W(TM_-)=\pi^*W(\xi)$. But $W(\xi)=1+w_4(\xi)$, which implies that $W(TM_-)=1+w_4(TM_-)$ and thus $w_2(TM_-)=0$. However, by Lemma 2.4 of \cite{MM82}, $w_2(TM_-)\neq 0$, which leads to a contradiction. The proof for (a) of part (ii) is now complete.

Besides, from the proof of Proposition 3.4 in \cite{Ta95}, we see that the cohomology ring $H^*(M_-; \mathbb{Z})$ is different from $H^*(S^2\times S^4; \mathbb{Z})$, which verifies (b) of part (ii). Moreover,
observing that $\widetilde{G}_2(\mathbb{R}^5)$ is a Hermitian symmetric space and $\pi_1(\widetilde{G}_2(\mathbb{R}^5))=0$, we obtain $cat(M_-)=3$ following \cite{Be76}, which verifies (c) of part (ii).
%Due to \cite{Ba17}, the cohomology ring $H^*(M_-; \mathbb{Z}_2)$ with $\mathbb{Z}_2$-coefficient is isomorphic to $\mathbb{Z}_2[x, y]/\langle x^2, y^2\rangle$ where deg $x$=2 and deg $y$=4.
At last, (d) of part (ii) follows from Corollary 1.2 of \cite{MM82}.

Now we complete the proof for Theorem \ref{QTY6}.
\end{proof}

\begin{rem}
\begin{itemize}
\item[(i)] The cohomology rings $H^{*}(M_+^6; \mathbb{Z})$ and $H^*(M_-^6; \mathbb{Z})$ are not isomorphic. \vspace{2mm}
\item[(ii)]
By Theorem \ref{QTY6}, $M_+^6$ and $M_-^6$ are not s-parallelizable. Thus, the normal bundles of $M_{\pm}^6$ in $S^9$ are not stably trivial.
\end{itemize}
\end{rem}

\vspace{3mm}

\subsubsection{\textbf{The case with $(g, m_1, m_2)=(4, 4, 5)$.}}

Consider the Lie algebra $so(5, \mathbb{C})$. The unitary group $U(5)$
acts on it by the adjoint representation $g\cdot Z=\overline{g}Zg^{-1}$
for $g\in U(5)$ and $Z\in so(5, \mathbb{C})$. The principal orbits of this action constitute the
homogeneous 1-parameter family of isoparametric hypersurfaces in $S^{19}$ with multiplicities $(m_1, m_2)=(4,5)$.
The two focal submanifolds $M_{\pm}$ are diffeomorphic to $U(5)/(Sp(2)\times U(1))$ and $U(5)/(SU(2)\times U(3))$, respectively.

\vspace{2mm}

\begin{thm}\label{QTY7}
$\mathrm{(i)}.$ For the focal submanifold $M_+$ with $(g, m_1, m_2)=(4, 4, 5)$, which is diffeomorphic to $U(5)/(Sp(2)\times U(1))$,%\vspace{1mm}

 \begin{itemize}
\item[(a).] $M_+$ is an $S^q$-bundle over $S^p$ if and only if $p=9$ and $q=5$;%\vspace{1mm}

\item[(b).] the normal bundle of $M_+$ in $S^{19}$ is not trivial;%\vspace{1mm}

\item[(c).] $cat(M_+)=2$.%\vspace{1mm}

\end{itemize}
\vspace{1mm}

$\mathrm{(ii)}.$ For the focal submanifold $M_-$ with $(g, m_1, m_2)=(4, 4, 5)$, which is diffeomorphic to $U(5)/(SU(2)\times U(3))$,%\vspace{1mm}

\begin{itemize}
\item[(a).] $M_-$ is not an $S^q$-bundle over $S^p$ for any positive integers $p$ and $q$;%\vspace{1mm}

\item[(b).] the normal bundle of $M_-$ in $S^{19}$ is not stably trivial;%\vspace{1mm}

\item[(c).] $2\leq cat(M_-)\leq 3$.%\vspace{1mm}

\end{itemize}

\end{thm}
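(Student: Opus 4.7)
The plan combines M\"{u}nzner's cohomological data for $g=4$ focal submanifolds with the homogeneous descriptions $M_+\cong U(5)/(Sp(2)\times U(1))\cong SU(5)/Sp(2)$ and $M_-\cong U(5)/(SU(2)\times U(3))$. For $(m_1,m_2)=(4,5)$, M\"{u}nzner gives $H^*(M_+;\mathbb{Z})\cong\mathbb{Z}[\alpha,\beta]/(\alpha^2,\beta^2)$ with $|\alpha|=5,\,|\beta|=9$, and $H^*(M_-;\mathbb{Z})\cong\mathbb{Z}[\alpha',\beta']/(\alpha'^2,\beta'^2)$ with $|\alpha'|=4,\,|\beta'|=9$, so $cuplength(M_\pm)=2$ and $cat(M_\pm)\geq 2$ immediately.

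For part (i), the bundle in (a) arises from the chain $Sp(2)\subset SU(4)\subset SU(5)$: using $SU(5)/SU(4)=S^9$ and $SU(4)/Sp(2)=S^5$, the homogeneous space $M_+=SU(5)/Sp(2)$ is an $S^5$-bundle over $S^9$. For the converse, a Gysin-sequence comparison restricts any $S^q$-bundle-over-$S^p$ structure to $\{p,q\}\subset\{5,9\}$ with $p+q=14$; the case $p=5,\,q=9$ is excluded because $\pi_4(SO(10))=0$ (Bott stable range) would make the bundle trivial, so $M_+\cong S^5\times S^9$. I then plan to derive a contradiction from a characteristic-class or Steenrod-operation computation on $M_+=SU(5)/Sp(2)$ (e.g.\ showing $Sq^4\alpha\neq 0$ in $H^9(M_+;\mathbb{Z}_2)$ via Borel's method, while K\"{u}nneth forces $Sq^4\alpha=0$ on $S^5\times S^9$). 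Part (i)(b) follows by identifying $\nu M_+$ as the $U(5)$-equivariant rank-$5$ bundle associated with the normal-slice representation of $Sp(2)\times U(1)$, and showing that its total Stiefel--Whitney class is non-trivial. Part (i)(c) follows from James' inequality $cat(X)\leq\dim X/(\mathrm{conn}(X)+1)$: the fibration in (a) makes $M_+$ $4$-connected, so $cat(M_+)\leq 14/5<3$, yielding $cat(M_+)=2$.

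For part (ii)(a), cohomology forces $\{p,q\}\subset\{4,9\}$ with $p+q=13$; both potential structures $S^9\to M_-\to S^4$ and $S^4\to M_-\to S^9$ yield $\pi_3(M_-)\cong\mathbb{Z}$ from the homotopy long exact sequence of the bundle. However, the principal fibration $SU(2)\times U(3)\to U(5)\to M_-$ gives $\pi_3(M_-)=0$, since both inclusions $SU(2)\hookrightarrow SU(5)$ and $U(3)\hookrightarrow U(5)$ induce isomorphisms $\pi_3\cong\mathbb{Z}\to\pi_3(U(5))\cong\mathbb{Z}$, so their combined image is surjective and $M_-$ is $3$-connected. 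This kills both bundle cases. Part (ii)(b) follows by exhibiting a non-vanishing characteristic class of $TM_-$ computed from the isotropy representation on $\mathfrak{u}(5)/(\mathfrak{su}(2)\oplus\mathfrak{u}(3))$, which forces $TM_-$, hence $\nu M_-$ (via $TM_-\oplus\nu M_-\cong TS^{19}|_{M_-}$ stably trivial), to be not stably trivial. Part (ii)(c) follows from the same $3$-connectedness plus James' inequality $cat(M_-)\leq 13/4<4$. The main obstacle, as I expect, is the ``only if'' half of (i)(a): since $M_+$ and $S^5\times S^9$ share integral cohomology ring, cup-length, and rational homotopy type, distinguishing them requires a concrete Steenrod or Stiefel--Whitney computation on the homogeneous space $SU(5)/Sp(2)$.
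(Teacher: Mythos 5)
Your overall skeleton for (i)(a), (i)(c) and (ii)(c) matches the paper: M\"unzner's cohomology limits the possible $(p,q)$, the chain $Sp(2)\subset SU(4)\subset SU(5)$ gives the $S^5$-bundle over $S^9$, $\pi_4SO(10)=0$ reduces the $S^9$-over-$S^5$ case to $M_+\simeq S^5\times S^9$, and James' inequality gives the upper category bounds. But several of your steps do not go through. In (ii)(a) your claimed contradiction evaporates: for an $S^9$-bundle over $S^4$ the relevant segment is $\pi_3S^9\to\pi_3M_-\to\pi_3S^4$ with both ends zero, and for an $S^4$-bundle over $S^9$ it is $\pi_4S^9\to\pi_3S^4\to\pi_3M_-\to\pi_3S^9$, again forcing $\pi_3M_-=0$. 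Neither structure gives $\pi_3M_-\cong\mathbb{Z}$, so both are perfectly consistent with the $3$-connectedness of $M_-$ that you correctly derive from $U(5)\to M_-$; low-degree homotopy groups simply do not separate the cases. The paper instead excludes the $S^9$-over-$S^4$ case by an order count on $\pi_{10}$ (comparing $|\pi_{10}G_2(\mathbb{C}^5)|\le 36$ with the forced surjection $\pi_{10}M_-\to\pi_{10}S^4\cong\mathbb{Z}_{24}\oplus\mathbb{Z}_2$), and the $S^4$-over-$S^9$ case by $w_4(TM_-)\neq 0$.

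Your plan for (i)(b) — a non-trivial total Stiefel--Whitney class of $\nu M_+$ — provably cannot work. Since $H^i(M_+;\mathbb{Z}_2)=0$ for $1\le i\le 4$ and $6\le i\le 8$, the only Wu class that could survive is $v_5$, and $\langle v_5x,[M_+]\rangle=\langle Sq^5x,[M_+]\rangle$ with $Sq^5=Sq^1Sq^4$ annihilating $H^9$ because $Sq^4$ already lands in $H^{13}=0$; hence $w(TM_+)=1$, so $w(\nu M_+)=1$ by Whitney duality, and all Pontryagin classes and the Euler class of a rank-$5$ bundle over $M_+$ also live in zero groups. This is precisely why the paper's proof of (i)(b) is the long homotopy-theoretic argument: assume $M\cong M_+\times S^4$, then derive incompatible constraints on $|\pi_{10}G_2(\mathbb{C}^5)|$ from the two fibrations $S^1\hookrightarrow M_-\to G_2(\mathbb{C}^5)$ and $U(2)\hookrightarrow V_2(\mathbb{C}^5)\to G_2(\mathbb{C}^5)$. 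Finally, the step you yourself flag as the main obstacle, $M_+\not\simeq S^5\times S^9$, is left as a plan (the paper cites Proposition 1.1 of \cite{TY15}), and your (ii)(b), while pointed in the right direction, needs the concrete fact $w_4(TM_-)\neq0$, which the paper takes from \cite{Ta91}.
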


\begin{proof}
We start with the proof for part (i). Since
$$M_+\cong U(5)/Sp(2)\times U(1)\cong SU(5)/S(Sp(2)\times U(1))\cong SU(5)/Sp(2),$$
we see $M_+$ fibers over $SU(5)/SU(4)\cong S^9$ with fiber $SU(4)/Sp(2)\cong S^5$. It follows immediately that $\pi_1(M_+)=0$. We will show that there is no other possibility for $M_+$ to be a sphere bundle over a sphere.

According to \cite{Mun81},
$$H^{k}(M_+; \mathbb{Z})\cong\left\{\begin{array}{ll} \mathbb{Z},~~\emph{if}~~k=0, 5, 9, 14\\
0,~~\emph{ otherwise}.
\end{array}\right.$$
Thus Hurewicz theorem leads to $\pi_5(M_+)\cong H^{5}(M_+; \mathbb{Z})\cong\mathbb{Z}$.

Suppose $M_+$ is the total space of an $S^{14-p}$-bundle over $S^{p}$. Then there exists an exact sequence
$$\cdots\rightarrow\pi_6 S^p\rightarrow \pi_5S^{14-p}\rightarrow \pi_5 M_+\rightarrow\pi_5 S^p\rightarrow \pi_4 S^{14-p}\rightarrow\cdots.$$
If $p\geq 10$, then $\pi_5S^{14-p}\cong\pi_5 M_+\cong \mathbb{Z}$, which contradicts the facts $\pi_5S^4\cong\pi_5S^3\cong\pi_5S^2\cong\mathbb{Z}_2$ (c.f. \cite{St51}) and $\pi_5S^1=0$. If $p<9$, then $14-p>5$ and $\pi_5S^p\cong\pi_5M_+\cong\mathbb{Z}$, which implies that $p=5$. Namely, $M_+$ might be the total space of an $S^{9}$-bundle over $S^{5}$.
Suppose $M_+$ is the total space of a certain $S^9$-bundle over $S^5$. Since $\pi_4SO(10)\cong 0$ (c.f. \cite{St51}), the characteristic map is trivial in $\pi_4SO(10)$, which implies that $M_+$ is diffeomorphic to $S^5\times S^9$. However, it is impossible by Proposition 1.1 in \cite{TY15}. Therefore, $M_+$ could only be the total space of an $S^{5}$-bundle over $S^{9}$.

As for (b) of part (i), we argue by contradiction. Suppose that the normal bundle of $M_+$ in $S^{19}$ is trivial. Then the isoparametric hypersurface $M^{18}$
is diffeomorphic to $M_+^{14}\times S^4$. From the fibration $S^1\hookrightarrow M_-\rightarrow G_2(\mathbb{C}^5)$, we derive
$$\pi_{10}M_-\cong\pi_{10}G_2(\mathbb{C}^5).$$
Considering the fibration $U(2)\hookrightarrow V_2(\mathbb{C}^5)\rightarrow G_2(\mathbb{C}^5)$, we have the following exact sequence
$$\cdots\rightarrow\pi_{10}S^3\rightarrow\pi_{10}V_2(\mathbb{C}^5)\rightarrow\pi_{10}G_2(\mathbb{C}^5)\rightarrow \pi_9S^3\rightarrow\cdots.$$
By Lemma II.1 (ii) in \cite{Ke60},
\begin{equation}\label{pi10V}
\pi_{10}V_2(\mathbb{C}^5)\cong \mathbb{Z}_{12}.
\end{equation}
Moreover, from P. 332 of \cite{Hu59}, $$\pi_9S^3\cong\mathbb{Z}_3.$$
Combining these together, we obtain an upper bound of
$|\pi_{10}G_2(\mathbb{C}^5)|$, the order of $\pi_{10}G_2(\mathbb{C}^5)$:
$$|\pi_{10}G_2(\mathbb{C}^5)|\leq 36.$$
Moreover, the isoparametric hypersurface $M^{18}$ is also an $S^5$-bundle over $M_-$, which induces an exact sequence
\begin{equation}\label{fgh}
\cdots\rightarrow\pi_{10}S^5\overset{f}{\rightarrow} \pi_{10}M \overset{g}{\rightarrow}\pi_{10} M_-\overset{h}{\rightarrow} \pi_{9}S^5\rightarrow\cdots.
\end{equation}
It is proved in \cite{Hu59} that $\pi_9S^5\cong \pi_{10}S^5\cong \mathbb{Z}_2$ and $\pi_{10}S^4\cong \mathbb{Z}_{24}\oplus \mathbb{Z}_2$.
Thus $$\pi_{10}M=\pi_{10}M_+\oplus \pi_{10}S^4\cong \pi_{10}M_+\oplus \mathbb{Z}_{24}\oplus \mathbb{Z}_2,$$
and furthermore, we derive from the exact sequence (\ref{fgh}) that
$$|\pi_{10}M|=|\pi_{10}M_+\oplus\mathbb{Z}_{24}\oplus \mathbb{Z}_2|\leq |\pi_{10}S^5||\pi_{10}M_-|\leq 72.$$
It follows that $\pi_{10}M_+\cong0$ and $f$ is not a zero homomorphism. By the exact sequence (\ref{fgh}) again, $\mathrm{Ker} g= \mathrm{Im} f\cong \mathbb{Z}_2$, $\mathrm{Im} g\cong \pi_{10}M/\mathrm{Ker} g$, $\mathrm{Ker} h=\mathrm{Im} g$ and $\mathrm{Im} h\cong \pi_{10}M_-/\mathrm{Ker} h$. That is,
\begin{equation*}\label{fgh'}
0{\rightarrow}\mathbb{Z}_2\overset{f}{\rightarrow} \mathbb{Z}_{24}\oplus \mathbb{Z}_2 \overset{g}{\rightarrow}\pi_{10} M_-\overset{h}{\rightarrow} \mathbb{Z}_2.
\end{equation*}
Then it follows from $\pi_{10}M=\mathbb{Z}_{24}\oplus \mathbb{Z}_2$ and $|\pi_{10}M_-|=|\pi_{10}G_2(\mathbb{C}^5)|\leq 36$ that $h$ is a zero homomorphism. Hence, $\pi_{10}M_-\cong \mathbb{Z}_{24}\oplus \mathbb{Z}_2/\mathrm{Ker} g$ and $|\pi_{10}M_-|=24$.

Considering the fibration $U(2)\hookrightarrow V_2(\mathbb{C}^5)\rightarrow G_2(\mathbb{C}^5)$ again, we have the exact sequence
$$\cdots\rightarrow\pi_{10}S^3\overset{\alpha}{\rightarrow}\pi_{10}V_2(\mathbb{C}^5)\overset{\beta}{\rightarrow}\pi_{10}G_2(\mathbb{C}^5)\overset{\gamma}{\rightarrow} \pi_9S^3\rightarrow\cdots$$
with $\pi_{10}S^3\cong \mathrm{Z}_{15}$(c.f. \cite{Hu59}), $\pi_{10}V_2(\mathbb{C}^5)\cong \mathbb{Z}_{12}$, $|\pi_{10}M_-|=|\pi_{10}G_2(\mathbb{C}^5)|=24$ and $\pi_{9}S^3\cong\mathbb{Z}_3$. It is clear that $\gamma$ is not a zero homomorphism. Since $|\pi_{9}S^3|=3$ is a prime number, we infer that $\gamma$ is surjective. Hence, $$\pi_{10}G_2(\mathbb{C}^5)/\mathrm{Ker} \gamma\cong \mathbb{Z}_3,$$ and $$|\mathrm{Im} \beta|=|\mathrm{Ker} \gamma|=|\pi_{10}G_2(\mathbb{C}^5)|/3=8.$$
Furthermore, from $\pi_{10}V_2(\mathbb{C}^5)/\mathrm{Ker} \beta \cong \mathrm{Im} \beta$, we derive that
$$|\pi_{10}V_2(\mathbb{C}^5)|/|\mathrm{Ker} \beta|=|\mathrm{Im} \beta|=8,$$ which contradicts (\ref{pi10V}).

As for (c) of part (i), from $M_+$ is an $S^5$-bundle over $S^9$, it follows that $\pi_iM_+=0$ for $i<5$. Using Proposition 5.1 in \cite{Ja78}, we obtain $cat(M_+)\leq \frac{14}{5}$. On the other hand, it is well-known that a closed manifold $X$ satisfying $cat(X) = 1$ is homotopy equivalent to a sphere. Hence, $cat(M_+)\geq 2$, and thus $cat(M_+)=2$.

Next, we are going to prove part (ii). As for (a) in part (ii), similar to the arguments of part (i), it is only possible that $M_-$ is an $S^9$-bundle over $S^4$ or an $S^4$-bundle over
$S^9$.

Suppose $M_-$ is an $S^9$-bundle over $S^4$. On one hand, from the proof for (b) of part (i), we see $\pi_{10}M_-\cong\pi_{10}G_2(\mathbb{C}^5)$ and
$|\pi_{10}G_2(\mathbb{C}^5)|\leq 36$.
On the other hand, the assumption that $M_-$ is an $S^9$-bundle over $S^4$ implies the following exact sequence $$\cdots\rightarrow\pi_{10}M_-\rightarrow\pi_{10}S^4\rightarrow\pi_9S^9\rightarrow\cdots.$$
Since $\pi_{10}S^4\cong \mathbb{Z}_{24}\oplus\mathbb{Z}_2$ (c.f. \cite{Hu59}) and $\pi_9S^9\cong\mathbb{Z}$, we deduce that the homomorphism from $\pi_{10}S^4$ to $\pi_9S^9$ is zero. That is to say, the homomorphism from $\pi_{10}M_-$ to $\pi_{10}S^4$ is onto. It follows that $$|\pi_{10}G_2(\mathbb{C}^5)|=|\pi_{10}M_-|\geq |\pi_{10}S^4|=48,$$
which contradicts $|\pi_{10}G_2(\mathbb{C}^5)|\leq 36$. Hence, $M_-$ is not an $S^9$-bundle over $S^4$.

Suppose $M_-$ is an $S^4$-bundle over $S^9$. Let $\xi$ be the associated vector bundle over $S^9$ of rank $5$. Then
$$TM_-\oplus \bm{\varepsilon}\cong \pi^{*}TS^9\oplus \pi^*\xi.$$
It follows that the total Stiefel-Whitney class $W(TM_-)=\pi^*W(\xi)=1$ where $W(\xi)=1$. However, Lemma 1.1 in \cite{Ta91} reveals that $w_4(TM_-)\neq0$, which leads to a contradiction. Hence, $M_-$ is not an $S^4$-bundle over $S^9$.

Clearly, (b) of part (ii) follows from $w_4(TM_-)\neq0$. Now, we are in a position to consider (c) of part (ii). Since $M_-$ is not homotopy equivalent to a sphere, one obtains $cat(M_-)\geq 2$. Moreover, $M_-$ admits a Morse function with $4$ critical points(c.f. \cite{Ta91}), then we have $cat(M_-)\leq 3$(c.f. \cite{DFN90}).
\end{proof}

\vspace{2mm}

\begin{rem}
$\mathrm{(i)}.$ By Theorem B in \cite{Fa99}, $M_+^{14}$ is almost diffeomorphic to an $S^5$-bundle over $S^9$. We reveal by Theorem \ref{QTY7} that $M_+^{14}$ is indeed diffeomorphic to an $S^5$-bundle over $S^9$. Moreover, according to \cite{TY15}, $M_+^{14}$ is not homotopy equivalent to $S^5\times S^9$, which implies that the $S^5$-bundle over $S^9$ is not trivial.

$\mathrm{(ii)}.$ It is still a problem to determine the exact value of $cat(M_-^{13})$ in the theorem above.
Recall that if $X$ is a compact topological manifold of dimension $n$,
the ball-category of X, $ballcat(X)$, is the minimal number $m$ such that there is a
covering of $X$ with $m+1$ closed disks of dimension $n$. By Theorem 3.46 in \cite{CLOT03}, $cat(M_-^{13})=ballcat(M_-^{13})$.
If $cat(M_-^{13})=3$, then $wcat(M_-)=cat(M_-)$ by Theorem 2.2 in \cite{St99}. For the definition of $wcat$, see \cite{Ja78}. In general, $wcat(X)\leq cat(X)$ for any finite CW complex.
\end{rem}

\vspace{3mm}

\subsubsection{\textbf{The $g=3$ case}}
\begin{prop}\label{QTY8}
Let $M$ be the isoparametric hypersurface in $S^{3m+1}$ with $g=3$ and $m_1=m_2=m=1,2,4$ or $8$. Then
\begin{itemize}
\item[(i).] $m=1$: $cat(M^3)=cat(SO(3)/\mathbb{Z}_2\oplus \mathbb{Z}_2)=3$;\vspace{1mm}

\item[(ii).] $m=2$: $cat(M^{6})=cat(SU(3)/T^2)=3$;\vspace{1mm}

\item[(iii).] $m=4$: $cat(M^{12})=cat(Sp(3)/Sp(1)^3)=3$;\vspace{1mm}

\item[(iv).] $m=8$: $cat(M^{24})=cat(F_4/Spin(8))=3$.
\end{itemize}
\end{prop}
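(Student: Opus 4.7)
The plan is to pin down $\mathrm{cat}(M)=3$ in each of the four cases by sandwiching between matching upper and lower bounds. A uniform geometric picture tying the four cases together is that, since $g=3$, the isoparametric hypersurface $M^{3m}$ is a tube of constant radius over the Veronese embedding $\mathbb{F}P^2\hookrightarrow S^{3m+1}$ (with $\mathbb{F}=\mathbb{R},\mathbb{C},\mathbb{H},\mathbb{O}$ for $m=1,2,4,8$), hence is the total space of the unit normal $S^m$-bundle $S^m\hookrightarrow M^{3m}\to \mathbb{F}P^2$. Both base and fiber are $(m-1)$-connected, so the long exact sequence of homotopy groups of this fibration forces $M$ itself to be $(m-1)$-connected.

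For the upper bound, applying Proposition 5.1 of \cite{Ja78} -- the same tool used in the proof of Theorem \ref{QTY5} -- yields
\[
\mathrm{cat}(M)\;\leq\;\frac{\dim M}{m}\;=\;\frac{3m}{m}\;=\;3
\]
uniformly in all four cases.

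For the lower bound in cases (ii)--(iv), I would exploit Borel's presentation of the rational cohomology of an equal-rank homogeneous space $G/H$,
\[
H^{*}(G/H;\mathbb{Q})\;\cong\;H^{*}(BT;\mathbb{Q})^{W(H)}\big/\bigl(H^{*}(BT;\mathbb{Q})^{W(G)}\bigr)^{+}.
\]
Unpacking this directly for (ii) $(G,H)=(SU(3),T^2)$, for (iii) $(G,H)=(Sp(3),Sp(1)^3)$, and for (iv) $(G,H)=(F_4,\mathrm{Spin}(8))$ (via the triality embedding) reproduces the Poincar\'e polynomial $1+2t^{m}+2t^{2m}+t^{3m}$ (consistent with the Euler characteristic $|W(G)|/|W(H)|=6$) and exhibits two generators $x,y\in H^{m}(M;\mathbb{Q})$ whose triple product $x^{2}y$ -- equivalently $xy^{2}$ -- generates the one-dimensional top group $H^{3m}(M;\mathbb{Q})$. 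This gives $\mathrm{cuplength}(M)\geq 3$, hence $\mathrm{cat}(M)\geq 3$. For case (i), I would lift through the double cover $SU(2)=\mathrm{Spin}(3)\to SO(3)$ to identify $M^3\cong S^3/Q_8$, where $Q_8$ is the preimage of the Klein four-group; then the Serre spectral sequence of $S^3\to M^3\to BQ_8$ shows that the mod-$2$ cohomology of $M^3$ coincides with $H^{*}(BQ_8;\mathbb{Z}_2)$ in degrees $\leq 3$. In the standard presentation $H^{*}(BQ_8;\mathbb{Z}_2)\cong \mathbb{Z}_2[x,y,P]/(x^2+xy+y^2,\,x^2y+xy^2)$ with $|x|=|y|=1$, the class $xy^{2}=x\cdot y\cdot y$ is a nonzero generator of $H^{3}$, so $\mathrm{cuplength}(M^3;\mathbb{Z}_2)\geq 3$ and $\mathrm{cat}(M^3)\geq 3$.

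The principal technical difficulty I foresee is the cohomology-ring computation in case (iv): one must verify that the triality embedding $\mathrm{Spin}(8)\hookrightarrow F_4$ really produces the expected coinvariant presentation with a nondegenerate triple product of degree-$8$ classes, rather than having these products collapse inside the $W(F_4)$-invariant relations. Once that step is settled, cases (ii) and (iii) follow by the same recipe with markedly less bookkeeping, and case (i) is a self-contained computation in finite group cohomology.
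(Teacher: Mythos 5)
Your proposal is correct in outline, but it reaches the lower bound $cat(M)\geq 3$ by a genuinely different route from the paper. The paper handles (i) by quoting the classification of Lusternik--Schnirelmann category for closed $3$-manifolds: $\pi_1(M^3)=\mathbb{Q}_8$ is nontrivial and not free, so $cat(M^3)=3$ by \cite{GG92} (your $S^3/\mathbb{Q}_8$ cup-length computation in $H^*(B\mathbb{Q}_8;\mathbb{Z}_2)$ is a valid, more hands-on substitute). Case (ii) is quoted directly from Singhof \cite{Si75}. For (iii) and (iv) the paper deliberately avoids any triple cup product: it uses M\"unzner's additive decomposition $H^k(M;\mathbb{Z}_2)\cong H^k(M_+;\mathbb{Z}_2)\oplus H^k(M_-;\mathbb{Z}_2)$ with each summand isomorphic as a ring to $H^*(\mathbb{H}P^2;\mathbb{Z}_2)$ (resp.\ $\mathbb{O}P^2$), so that $c(sq^m)(x)=sq^m(x)=x\cup x\neq 0$ on $H^m(M;\mathbb{Z}_2)$, and then invokes Takens' Theorem 7.6 \cite{Ta68}, which forces $c(sq^m)=0$ whenever $cat\leq 2$. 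The point is that the focal-submanifold description only exhibits a nonzero \emph{square} (each $\mathbb{F}P^2$ contributes $u^2\neq0$ but $u^3=0$), and the Steenrod-operation criterion upgrades this to a category-$3$ obstruction for free. Your uniform cup-length strategy shifts exactly that burden onto the Borel presentation: you must show the ring is generated in degree $m$ and then use Poincar\'e duality on the pairing $H^m\otimes H^{2m}\to H^{3m}$. This does work, including for $F_4/Spin(8)$ (the basic $W(D_4)$-invariants of cohomological degrees $4$ and $12$ are also basic $W(F_4)$-invariants and hence die in the quotient, leaving the two degree-$8$ generators; finite-dimensionality of the quotient forces the degree-$12$ one to be genuinely killed), so the difficulty you flag in case (iv) is real but surmountable; however, as written your proposal leaves that verification open, whereas the paper's argument closes it with no ring computation beyond $x^2\neq 0$. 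Your upper bound via the $(m-1)$-connectivity of the tube $S^m\hookrightarrow M\to\mathbb{F}P^2$ and James' Proposition 5.1 coincides with the paper's.
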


\begin{proof}
When $g=3$ and $m=1$, $M^3=SO(3)/\mathbb{Z}_2\oplus \mathbb{Z}_2$.
Since $\pi_1(M^3)=\mathbb{Q}_8=\{\pm \mathrm{1}, \pm \mathrm{i}, \pm \mathrm{j}, \pm \mathrm{k}\}$ (c.f. \cite{GH87}), we see
$\pi_1(M^3)\neq 0$ and not free. Thus $cat(M^3)=3$ (c.f. \cite{GG92}).

When $g=3$ and $m=2$, we have $cat(M^{6})=cat(SU(3)/T^2)=3$ (c.f. \cite{Si75}).

When $g=3$ and $m=4$, due to \cite{Mun81} and the Hurewicz theorem, we have $\pi_iM^{12}=0$ for $i\leq3$. It follows from Proposition 5.1 in \cite{Ja78} that $cat(M^{12})\leq 3$. Suppose $cat(M^{12})\leq 2$, then it follows from Theorem 7.6 in \cite{Ta68} that
$$c(sq^4): H^4(M^{12}; \mathbb{Z}_2)\rightarrow H^8(M^{12}; \mathbb{Z}_2)$$
is zero, where $c(sq^4)=sq^4+sq^3sq^1$. By \cite{Mun81},
$$H^4(M^{12}; \mathbb{Z}_2)\cong H^4(M_+; \mathbb{Z}_2)\oplus H^4(M_-; \mathbb{Z}_2)\cong\mathbb{Z}_2\oplus\mathbb{Z}_2,$$
and
$$H^8(M^{12}; \mathbb{Z}_2)\cong H^8(M_+; \mathbb{Z}_2)\oplus H^8(M_-; \mathbb{Z}_2)\cong\mathbb{Z}_2\oplus\mathbb{Z}_2.$$
Moreover, we have the following isomorphisms of the cohomology rings with $\mathbb{Z}_2$-coefficients
$$H^*(M_+; \mathbb{Z}_2)\cong H^*(M_-; \mathbb{Z}_2)\cong H^*(\mathbb{H}P^2; \mathbb{Z}_2).$$
Thus
\begin{eqnarray*}
c(sq^4)=sq^4: ~~H^4(M^{12}; \mathbb{Z}_2)&\rightarrow& H^8(M^{12}; \mathbb{Z}_2)\\ x&\mapsto& sq^4(x)=x\cup x
 \end{eqnarray*}
is not zero, which leads to a contradiction. Therefore, $cat(M^{12})=3$.

When $g=3$ and $m=8$, replacing $c(sq^4)$ with $c(sq^8)$, a similar argument as in the case $g=3, m=4$ by Theorem 7.6 of \cite{Ta68} implies $cat(M^{24})=3$.

The proof for Proposition \ref{QTY8} is now complete.
\end{proof}

\vspace{2mm}

\subsubsection{\textbf{The $g=6$ case}}
\begin{prop}\label{QTY9}
Let $M$ be the isoparametric hypersurface in $S^{6m+1}$ with $g=6$ and $m_1=m_2=m=1$ or $2$. Then
%For the isoparametric family with $g=6$ in the unit sphere, we have:
\begin{itemize}
\item[(i).] $m=1$: $cat(M^6)=4$ and $cat(M_{\pm})=3$;\vspace{1mm}

\item[(ii).] $m=2$: $cat(M^{12})=6$ and $cat(M_{\pm})=5$.
\end{itemize}
\end{prop}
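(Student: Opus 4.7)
The strategy parallels the previous category computations in Theorem \ref{QTY5} and Proposition \ref{QTY8}: bound $cat$ from below by cohomological cup-length, and match this with an upper bound tailored to the homogeneous structure. By the Hsiang-Lawson and Takagi-Takahashi classification, the $g=6$ homogeneous examples arise from the isotropy actions of the rank-two symmetric spaces $G_2/SO(4)$ (for $m=1$) and $(G_2\times G_2)/G_2$ (for $m=2$). In particular, for $m=2$, $M^{12}$ is the complete flag variety $G_2/T^2$ and the focal submanifolds $M_\pm^{10}$ are the two generalized flag manifolds $G_2/U(2)_\pm$, all of which are simply connected and Kähler; for $m=1$, $M^6$ and $M_\pm^5$ admit explicit descriptions as $SO(4)$-homogeneous spaces.

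For the lower bounds, I would apply M\"unzner's cohomological theorem (used throughout the proof of Theorem \ref{QTY5}) to write down $H^*(M;\mathbb{Z}_2)$, $H^*(M_+;\mathbb{Z}_2)$ and $H^*(M_-;\mathbb{Z}_2)$ as functions of $(g,m_1,m_2)$. Direct inspection of the resulting ring structures yields $\mathrm{cuplength}(M_\pm)=3$ and $\mathrm{cuplength}(M)=4$ when $m=1$, and $\mathrm{cuplength}(M_\pm)=5$, $\mathrm{cuplength}(M)=6$ when $m=2$. Since $cat(X)\geq \mathrm{cuplength}(X)$, the claimed lower bounds follow at once.

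The matching upper bound in the $m=2$ case comes from Singhof's theorem: for a compact simply-connected Lie group $G$ with maximal torus $T$, $cat(G/T)$ equals the number of positive roots of $G$, which for $G_2$ is $6$, giving $cat(G_2/T^2)=6$. Likewise, each $G_2/U(2)_\pm$ is a simply-connected Kähler flag variety of complex dimension $5$, and Singhof-type arguments (or general results equating $cat$ with cup-length for such flag varieties) give $cat(M_\pm)=5$. In the $m=1$ case, the corresponding upper bound must be obtained either by constructing an explicit contractible open cover coming from the Morse-Bott decomposition induced by the isoparametric function $f$ restricted to $M^6$ (whose critical set is governed by the two focal submanifolds) or by exploiting the $SO(4)/\Gamma$ fibration structure to produce an open cover of the required cardinality.

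The principal obstacle is the upper bound in the $m=1$ case: $M^6$ and $M_\pm^5$ are neither complex nor Kähler, and may have non-trivial (possibly non-abelian) fundamental group coming from the finite isotropy $\Gamma\subset SO(4)$ (cf.\ the analogous situation $\pi_1(M^3)=\mathbb{Q}_8$ in Proposition \ref{QTY8}). Consequently Singhof's theorem does not apply, and one must either build the contractible open cover by hand or invoke a specialized category computation for $SO(4)/\Gamma$-type spaces. A minor technical point is that M\"unzner's theorem most naturally gives Betti numbers, so extracting the exact ring structure needed for the cup-length computation in the $g=6$ cases requires a small amount of extra work using Steenrod operations or the multiplicative structure inherited from the inclusions $M_\pm\hookrightarrow M$.
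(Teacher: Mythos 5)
Your treatment of the $m=2$ case is essentially the paper's: $M^{12}\cong G_2/T^2$ is handled by Singhof's theorem, and $M_\pm^{10}\cong G_2/U(2)\cong\widetilde{G}_2(\mathbb{R}^7)$, a simply connected Hermitian symmetric space of complex dimension $5$, is handled by Berstein's result; that half is fine.

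The $m=1$ case has a genuine gap on both sides. For the lower bound, cup-length cannot give $cat(M^6)\ge 4$. Indeed $M^6\cong S^3\times\bigl(SO(3)/(\mathbb{Z}_2\oplus \mathbb{Z}_2)\bigr)=S^3\times(S^3/\mathbb{Q}_8)$, and in $H^*(\mathbb{Q}_8;\mathbb{Z}_2)$ every product of three one-dimensional classes vanishes --- the same phenomenon that forces Proposition \ref{QTY8}(i) to invoke \cite{GG92} rather than cup-length for $cat(M^3)=3$; over $\mathbb{Z}$ or $\mathbb{Z}_p$ with $p$ odd the situation is worse still, since $S^3/\mathbb{Q}_8$ is then a homology sphere. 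Hence $\mathrm{cuplength}(M^6)=3$ with any field coefficients, one short of what you need. The paper instead obtains $cat(M_\pm)\ge 3$ from the fact that $\pi_1(M_\pm)=\mathbb{Z}_2$ is not free (citing \cite{DKR08}), and $cat(M^6)\ge 4$ from $cat\bigl(SO(3)/(\mathbb{Z}_2\oplus\mathbb{Z}_2)\bigr)=3$ combined with Rudyak's category-weight theorem (Theorem 3.8 of \cite{Ru99}), which strengthens the cup-length bound for products. For the upper bound in the $m=1$ case you only sketch two possible strategies without carrying either out; the missing ingredient is Miyaoka's observation \cite{Miy93} that the $g=6$, $m=1$ family is the Hopf-fibration preimage of the $g=3$, $m=1$ family, which yields the explicit product decompositions $M^6\cong S^3\times SO(3)/(\mathbb{Z}_2\oplus\mathbb{Z}_2)$ and $M_\pm^5\cong S^3\times\mathbb{R}P^2$, whence the product inequality gives $cat(M_\pm)\le 3$ and $cat(M^6)\le 4$ at once. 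A smaller point: M\"unzner's theorem gives only the additive structure of $H^*$, so even your claim $\mathrm{cuplength}(M_\pm)=3$ for $m=1$ really rests on the identification $M_\pm\cong S^3\times\mathbb{R}P^2$ rather than on M\"unzner.
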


\begin{proof}
First, we consider the case $g=6$ and $m_1=m_2=1$. By the observation of Miyaoka , an isoparametric hypersurface with $g=6, m=1$ is the inverse image of an isoparametric hypersurface with $g=3, m=1$ via the Hopf fibration (c.f. \cite{Miy93}). Thus
$$M^6\cong S^3\times SO(3)/\mathbb{Z}_2\oplus \mathbb{Z}_2, \quad M_+^5\cong M_-^5\cong S^3\times \mathbb{R}P^2.$$
From $cat(\mathbb{R}P^2)=2$, one has $cat(M_{\pm})=2$ or $3$. On the other hand, according to \cite{DKR08}, $\pi_1(M_{\pm}^5)=\mathbb{Z}_2$ is not free, then
$cat(M_{\pm})\geq 3$. Therefore, $cat(M_{\pm}^5)=3$. Moreover, using the fact $cat(SO(3)/\mathbb{Z}_2\oplus \mathbb{Z}_2)=3$, it follows that $cat(M^6)=4$
by Theorem 3.8 of \cite{Ru99}.

Next, we consider the case $g=6$ and $m_1=m_2=2$. For this case,
$$M^{12}\cong G_2/T^2, \quad M_+^{10}\cong M_-^{10}\cong G_2/U(2).$$
From Theorem 2 of \cite{Si75}, it follows that $cat(M^{12})=6$. On the other hand, observe that $M_{\pm}\cong G_2/U(2)$ is diffeomorphic to $\widetilde{G}_2(\mathbb{R}^7)$ which is a simply connected Hermitian symmetric space (c.f. \cite{Miy11}). Hence we obtain $cat(M_{\pm})=5$ by \cite{Be76}.

The proof for Proposition \ref{QTY9} is now complete.
\end{proof}
\vspace{4mm}

\section{\textbf{Curvatures of an isoparametric family}}\label{sec3}
For an isoparametric family in the unit sphere $S^{n+1}(1)$, it is known that the scalar curvatures of isoparametric hypersurfaces are constant and non-negative (c.f. \cite{Tan04}, \cite{TY20}). Besides, for every unit normal vector at any point of a focal submanifold, the corresponding shape operator has principal curvatures $\cot\frac{(j-i)\pi}{g}$ (for a certain $1\leq i\leq g$) with multiplicity $m_j$, for $j\neq i, 1\leq j\leq g$ (c.f. Corollary 3.22 in \cite{CR15}). Thus a direct calculation by virtue of Gauss equation leads to the fact that the scalar curvatures of focal submanifolds are constant and non-negative.
Therefore, we only consider the sectional curvatures and Ricci curvatures in this section.

\subsection{Sectional curvature}

As we mentioned before, the principal curvatures of an isoparametric hypersurface $M^n\subset S^{n+1}(1)$ with $g$ distinct principal curvatures can be written as $\lambda_k=\cot(\theta+\frac{k-1}{g}\pi)$ ($k=1,\ldots, g$) with $\theta\in (0,\frac{\pi}{g})$.

It is obvious that an isoparametric hypersurface $M^n$ with $g=1$ has positive sectional curvatures and that with $g=2$ has non-negative sectional curvatures. When $g\geq 3$, taking $e_1, e_g \in T_pM^n$ to be unit principal vectors corresponding to distinct principal curvatures $\lambda_1, \lambda_g$, it follows easily from Gauss equation that the sectional curvature 
$$K(e_1, e_g)=1+\lambda_1\lambda_g=-\frac{\cot\frac{\pi}{g}(1+\lambda_1^2)}{\lambda_1-\cot\frac{\pi}{g}}<0.$$ 
Thus the sectional curvature of $M^n$ is not non-negative.
%for any point $p\in M^n$, there always exists a plane in $T_pM^n$ spanned by $e_1$ and $e_g$ with negative sectional curvature.

As for sectional curvatures of the focal submanifolds, we know that when $g=1$, the focal submanifolds are just two points. When $g=2$, the focal submanifolds are isometric to
$S^p(1)$ or $S^{n-p}(1)$, thus the sectional curvatures are constant and positive. When $g=3$, as we mentioned in the introduction, $m_1=m_2=m=1, 2, 4$ or $8$, and the focal submanifolds are Veronese embedding of $\mathbb{F}P^2$ in $S^{3m+1}$, where $\mathbb{F}=\R, \C, \mathbb{H}, \mathbb{O}$ corresponding to $m=1, 2, 4, 8$. The induced metric of $\R P^2$ has constant sectional (Gaussian) curvature $K=\frac{1}{3}$, and the induced metric of $\C P^2$, $\mathbb{H}P^2$ or $\mathbb{O}P^2$ is symmetric with sectional curvature $\frac{1}{3}\leq K\leq \frac{4}{3}$ (c.f. \cite{TY13}). Thus the sectional curvatures of these focal submanifolds are positive (c.f. \cite{Zil14}).

For the focal submanifolds with $g=4$ and $6$, we can determine which of them has non-negative sectional curvatures with respect to the induced metric from $S^{n+1}(1)$ as follows

\begin{thm}\label{1}
For the focal submanifolds of an isoparametric hypersurface in $S^{n+1}(1)$ with $g=4$ or $6$ distinct principal curvatures, we have
\begin{itemize}
\item[(i)] When $g=4$, the sectional curvatures of a focal submanifold with induced metric are non-negative if and only if the focal submanifold is one of the following
\begin{itemize}
\item[(a)] $M_+$ of OT-FKM type with multiplicities $(m_1, m_2)=(2,1), (6,1)$ or $(4,3)$ in the definite case;
\item[(b)] $M_-$ of OT-FKM type with multiplicities $(m_1, m_2)=(1, k)$;
\item[(c)] the focal submanifold with multiplicities $(m_1, m_2)=(2,2)$ and diffeomorphic to $\widetilde{G}_2(\R^5)$;
\end{itemize}

\item[(ii)] When $g=6$, the sectional curvatures of focal submanifolds with induced metric are not non-negative.
\end{itemize}
\end{thm}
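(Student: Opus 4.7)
The plan is to use the Gauss equation as the main computational tool. For a focal submanifold $M\subset S^{n+1}(1)$ with orthonormal normal frame $\{e_\alpha\}$ and associated shape operators $A_\alpha$, the sectional curvature of a $2$-plane spanned by orthonormal tangent vectors $X, Y$ is
\begin{equation*}
K(X,Y)=1+\sum_\alpha\bigl(\langle A_\alpha X, X\rangle\langle A_\alpha Y, Y\rangle-\langle A_\alpha X, Y\rangle^2\bigr).
\end{equation*}
The essential structural input, already recalled in the excerpt, is that each $A_\alpha$ is diagonalizable with eigenvalues $\cot\tfrac{(j-i)\pi}{g}$ of multiplicity $m_j$ (for $j\neq i$); in particular, for $g=4$ these are $\{1,0,-1\}$, and for $g=6$ they are $\{\pm\sqrt 3,\pm 1/\sqrt 3, 0\}$. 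The delicate point is that distinct shape operators need not commute, so a vector $X$ that is principal for $A_\alpha$ may not be principal for $A_\beta$; every construction must track this carefully.

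For part (ii), the strategy is to exhibit, at an arbitrary point of each $g=6$ focal submanifold, a $2$-plane with negative sectional curvature. Because the extreme eigenvalues of a shape operator are $\pm\sqrt 3$, choosing orthonormal $X, Y$ in the $+\sqrt 3$ and $-\sqrt 3$ eigenspaces of a common unit normal $e_0$ yields a contribution of $-3$ from $\alpha=0$, giving $K(X,Y)\leq -2$ plus contributions from the remaining normals. The task is to choose $X, Y$ so as to control these extra contributions: for $m=1$ the normal space is $2$-dimensional and the single extra term is easily bounded, while for $m=2$ one appeals to the Cartan--M\"unzner type formulas relating the shape operators to eliminate (or bound) them. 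In both cases the total stays negative.

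For part (i) we proceed case by case on the sufficient side. In each listed case, non-negativity is verified by identifying the induced metric explicitly. For $(2,1)$ and $(6,1)$, Theorem 2.3 gives $M_+\cong S^{l-1}\times S^1$, and a direct computation using the Clifford matrices $P_0,\dots,P_m$ together with the description \eqref{M_+} shows that the induced metric is, up to homothety, a product of round metrics. In the definite $(4,3)$ case, $M_+\cong Sp(2)$ and the induced metric agrees with a bi-invariant metric on this Lie group, hence has non-negative sectional curvature. For $M_-$ of OT-FKM with $(m_1,m_2)=(1,k)$, only $P_0,P_1$ are involved; the shape operators admit a simultaneous block form and a direct application of the Gauss formula above yields $K\geq 0$. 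Finally, for the $(2,2)$ focal submanifold diffeomorphic to $\widetilde G_2(\mathbb{R}^5)$, the induced metric agrees up to homothety with the standard symmetric metric on this rank $2$ symmetric space.

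For the converse direction in part (i), each remaining focal submanifold must admit a tangent $2$-plane of negative sectional curvature. The plan is to select $X, Y$ to be orthonormal principal vectors of a designated shape operator $A_0$ with eigenvalues $+1$ and $-1$, contributing $-1$ to $K(X,Y)$; then use the Clifford relations $P_\alpha P_\beta+P_\beta P_\alpha=2\delta_{\alpha\beta}I$ (in the OT-FKM case) or the isotropy decomposition (for the homogeneous cases $(2,2)$ with $M_+\cong\mathbb{C}P^3$ and $(4,5)$) to compute the remaining terms $\langle A_\alpha X,X\rangle\langle A_\alpha Y,Y\rangle-\langle A_\alpha X,Y\rangle^2$ and verify that their sum stays below $0$. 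The main obstacle is this uniform bookkeeping across OT-FKM families of unbounded codimension $m_1+1$: one must choose $X, Y$ whose $-1$ contribution from $A_0$ is not compensated by the positive contributions of the $A_\alpha$ for $\alpha\geq 1$, and this forces a careful exploitation of the symmetry/skew-symmetry properties of the $P_\alpha$ inherited from the Clifford relations.
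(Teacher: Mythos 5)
Your overall framework (Gauss equation plus case analysis, identifying the non-negatively curved cases as quotients of products or bi-invariant/symmetric metrics) is the same as the paper's, and your list of cases is correct. But the proposal has genuine gaps in the negative direction. The most serious one concerns $M_+$ of OT-FKM type: the family is infinite (all admissible $(m,l)$ with $l=k\delta(m)$), and your plan of ``uniform bookkeeping'' with the Clifford relations does not contain the idea that actually makes this finite. The paper's linchpin is a dimension count: if $l>2m$ one can choose $c\in\R^l$ orthogonal to the $2m$ vectors $z_1,z_2,E_\alpha z_1,E_\alpha z_2$, and then $X=(c,0)$, $Y=(0,c)$ satisfy $P_0X=X$, $P_0Y=-Y$, $\langle P_1X,Y\rangle=1$ with all other terms vanishing, giving $K=1-1-1=-1$. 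This reduces the problem to the six pairs $(2,1),(4,3),(5,2),(6,1),(8,7),(9,6)$, which are then treated by explicit (partly octonionic) constructions. Note also the quantitative point your plan glosses over: taking $X,Y$ to be $+1$ and $-1$ eigenvectors of a single $A_0$ only brings the baseline $1+\langle A_0X,X\rangle\langle A_0Y,Y\rangle$ down to $0$, so you \emph{must} produce a second normal direction with a nonzero cross term $\langle A_\alpha X,Y\rangle$ while keeping $\langle A_\alpha X,X\rangle\langle A_\alpha Y,Y\rangle$ from turning positive; identifying where that cross term comes from is exactly the content of the construction, and it must fail for the definite $(4,3)$ case (where $M_+$ is $Sp(2)$ with a bi-invariant metric), so no truly uniform choice can work.

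A second gap is $M_-$ of OT-FKM type with $m\geq 2$, which your converse plan does not really address: the shape operators of $M_-$ have a different structure (expressed through the basis $\{Q_i\eta_\beta, Q_ix, Q_iQ_jx\}$), and the paper's argument there is of a different flavor --- for $X=(Q_ix+Q_i\eta_\alpha)/\sqrt2$, $Y=(Q_jx-Q_j\eta_\alpha)/\sqrt2$ one gets $K(X,Y)=-\sum_\beta\langle A_\beta X,Y\rangle^2\leq 0$, and equality is then ruled out by showing it would force $l-m-1\leq 0$. This is not a pointwise exhibition of a strictly negative plane by inspection but a contradiction argument, and it needs to be supplied. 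Finally, for $g=6$ your bound on the residual terms is not automatic: a priori $\langle A_1X,X\rangle\langle A_1Y,Y\rangle$ can be as large as $+3$ on the $\pm\sqrt3$ eigenplane of $A_0$, so you still need Miyaoka's explicit matrices (or an equivalent structural fact) to see that $A_1$ has vanishing diagonal in the eigenbasis of $A_0$; the paper simply computes $K=-5$ and $K=-2$ from those matrices. Your sufficiency claims are essentially right, with the minor caveat that the $(2,1)$, $(6,1)$ and $(1,k)$ cases are bi-invariant metrics or $\mathbb{Z}_2$-quotients of products of round spheres rather than global Riemannian products, which of course does not affect non-negativity of the curvature.
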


\begin{rem}
The list of focal submanifolds with non-negative sectional curvatures in Theorem \ref{1} is the same as that of Ricci parallel focal submanifolds in \cite{TY15} and \cite{LY15}.
\end{rem}

\begin{proof}
According to the classification of isoparametric hypersurfaces in unit spheres, we will divide our proof of Theorem \ref{1} into several parts from 3.1.1 to 3.1.6.

\noindent
%$\bullet$\textbf{ $M_+$ of OT-FKM type.}

\subsubsection{ \textbf{$M_+$ of OT-FKM type.}}
%\begin{itemize}
%\item[(i)(a)] \textbf{ $M_+$ of OT-FKM type.}
%\end{itemize}

Recall that the focal submanifold $M_+$ of OT-FKM type with $(m_1, m_2)=(m, l-m-1)$ can be described as
\begin{equation}\label{M+}
  M_+=\{x\in S^{2l-1}(1)~|~\langle P_0x, x\rangle=\cdots=\langle P_mx, x\rangle=0\}.
\end{equation}
As pointed out by \cite{FKM81},  $\{P_0x,P_1x, \ldots, P_mx\}$ is an orthonormal basis of the normal space $T_x^{\bot}M_+$ at $x\in M_+\subset S^{2l-1}(1)$ and for any normal vector $\xi_{\alpha}=P_{\alpha}x$ $(\alpha=0,...,m)$,  the corresponding shape operator $A_{\alpha}:=A_{\xi_{\alpha}}$ for any $X\in T_xM_+$ is $A_{\alpha}X=-(P_{\alpha}X)^T$, the tangential component of $-P_{\alpha}X$.

It follows from Gauss equation that
\begin{eqnarray}\label{Gauss}
K(X, Y)&=& 1+\sum_{\alpha=0}^m\langle A_{\alpha}X,
X\rangle\langle A_{\alpha}Y, Y\rangle -\sum_{\alpha=0}^m \langle A_{\alpha}X,
Y\rangle^2\nonumber
\\
&=&1+\sum_{\alpha=0}^m\langle P_{\alpha}X, X\rangle\langle P_{\alpha}Y, Y\rangle - \sum_{\alpha=0}^m\langle P_{\alpha}X, Y\rangle^2.
\end{eqnarray}
We first show a lemma as below to eliminate most cases of $M_+$

\begin{lem}\label{lemma 1}
If $l>2m$, for each point $p\in M_+$, there exists a tangent plane with negative sectional curvature.
%the sectional curvature of $M_+$ of OT-FKM type is not non-negative.
\end{lem}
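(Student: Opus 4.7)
Fix a point $p\in M_+$ and aim to produce, via the Gauss equation \eqref{Gauss}, an orthonormal pair $(X,Y)\subset T_pM_+$ with $K(X,Y)<0$. The plan is to exploit the $\pm1$ eigenspace decomposition $\mathbb{R}^{2l}=V_+\oplus V_-$ of the symmetric Clifford generator $P_0$, each summand having dimension $l$. A short linear-algebra check identifies $V_\pm\cap T_pM_+$ with the $\mp 1$ eigenspaces of the shape operator $A_{P_0 p}$, and in particular shows $\dim(V_\pm\cap T_pM_+)=l-m-1$, with the explicit defining orthogonality conditions $V_+\cap T_pM_+\perp\{x_+,P_\beta x_-\}_{\beta\geq 1}$ and symmetrically $V_-\cap T_pM_+\perp\{x_-,P_\beta x_+\}_{\beta\geq 1}$, where $x=x_++x_-$ is the splitting of $p$ itself.

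I then pick unit vectors $X\in V_+\cap T_pM_+$ and $Y\in V_-\cap T_pM_+$. They are automatically orthogonal, as they lie in distinct $P_0$-eigenspaces, and the anti-commutation $P_\alpha P_0=-P_0P_\alpha$ for $\alpha\geq 1$ forces $P_\alpha$ to swap $V_+$ and $V_-$, so $\langle P_\alpha X,X\rangle=\langle P_\alpha Y,Y\rangle=0$ for $\alpha\geq 1$. Combining with $\langle P_0X,X\rangle\langle P_0Y,Y\rangle=1\cdot(-1)=-1$ and $\langle P_0X,Y\rangle=\langle X,Y\rangle=0$, the Gauss formula collapses to
\[
K(X,Y) \;=\; -\sum_{\alpha=1}^{m}\langle P_\alpha X,Y\rangle^{2}\;\leq\;0,
\]
so it only remains to arrange that some $\langle P_\alpha X,Y\rangle$ be nonzero.

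To arrange this I argue by contradiction: suppose $\langle P_\alpha X,Y\rangle=0$ for every admissible $X$, $Y$ and every $\alpha\geq 1$. Then $P_\alpha(V_+\cap T_pM_+)$ is perpendicular to $V_-\cap T_pM_+$ inside $V_-$ for each such $\alpha$. A direct computation shows the orthogonal complement of $V_-\cap T_pM_+$ in $V_-$ is the span of $x_-$ together with $P_\beta x_+$ for $\beta=1,\dots,m$, a subspace of dimension $m+1$. Since $P_\alpha$ is an isometry, $\dim P_\alpha(V_+\cap T_pM_+)=l-m-1$, so the inclusion forces $l-m-1\leq m+1$, i.e.\ $l\leq 2m+2$. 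This already closes the generic range $l\geq 2m+3$.

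The main obstacle is the borderline $l\in\{2m+1,2m+2\}$ still compatible with $l>2m$, where the one-$\alpha$ dimension count is tight. In this regime the plan is to use the operators $P_1,\dots,P_m$ jointly: assuming all $P_\alpha$ send $V_+\cap T_pM_+$ into the common $(m+1)$-dimensional complement $\operatorname{span}\{x_-,P_1x_+,\dots,P_mx_+\}$, I apply $P_\alpha$ a second time and expand using $P_\alpha P_\beta=-P_\beta P_\alpha$ to rewrite each image back inside $V_+$, producing a system of linear relations on $V_+\cap T_pM_+$ expressed entirely in terms of $\{x_+,P_\beta x_-\}_{\beta\geq 1}$. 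Combined with the orthogonality conditions that already define $V_+\cap T_pM_+$, these relations would force $V_+\cap T_pM_+=0$, contradicting $\dim(V_+\cap T_pM_+)=l-m-1>0$. Executing this Clifford-algebra bookkeeping cleanly, so as to cover the borderline OT-FKM triples uniformly, is the delicate step that completes the argument.
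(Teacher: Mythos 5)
Your reduction is sound as far as it goes: the identification of $V_\pm\cap T_pM_+$ as the orthogonal complements of $\{x_+,P_\beta x_-\}_{\beta\ge1}$ and $\{x_-,P_\beta x_+\}_{\beta\ge1}$, the dimension count $l-m-1$, and the collapse of the Gauss equation to $K(X,Y)=-\sum_{\alpha\ge1}\langle P_\alpha X,Y\rangle^2$ are all correct. But the proof is not complete, and the missing piece is exactly where the hypothesis $l>2m$ has to do its work. Your dimension-count contradiction only yields $l\le 2m+2$, so it proves the lemma only for $l\ge 2m+3$. The window $l\in\{2m+1,2m+2\}$ is not empty or exotic: it contains the genuine OT-FKM families $(m_1,m_2)=(1,1),(1,2),(2,3),(3,4),(7,8)$ (e.g.\ $m=1$, $l=3,4$; $m=2$, $l=6$; $m=3$, $l=8$; $m=7$, $l=16$), and the paper relies on the lemma precisely to dispose of all cases with $l>2m$ before turning to the finitely many families with $l\le 2m$. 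The final paragraph of your proposal is a plan, not an argument: ``apply $P_\alpha$ a second time and expand'' is not carried out, and it is not evident that the resulting relations force $V_+\cap T_pM_+=0$ rather than merely constraining it, especially in the tight case $l=2m+1$ where $\dim(V_+\cap T_pM_+)=m$ already equals the codimension of the obstruction space minus one.

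The paper closes this gap by not choosing $X$ and $Y$ independently: it takes $X=(c,0)\in V_+$ and $Y=P_1X=(0,c)\in V_-$ for a single vector $c\in\R^l$, so that $\langle P_1X,Y\rangle=|P_1X|^2=1$ holds by construction and never needs to be argued. The price is that requiring \emph{both} $(c,0)$ and $(0,c)$ to be tangent imposes the $2m$ linear conditions $\langle c,z_1\rangle=\langle c,z_2\rangle=\langle c,E_\alpha z_1\rangle=\langle c,E_\alpha z_2\rangle=0$ on $c\in\R^l$, and this is exactly where $l>2m$ enters. If you want to salvage your version, the cleanest repair is to make the same linkage: show that for a suitable $X\in V_+\cap T_pM_+$ the vector $P_1X$ lies in $V_-\cap T_pM_+$, which again reduces to a count of $2m$ conditions; as written, the borderline cases remain unproven.
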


\begin{proof}
As in \cite{FKM81}, we can define $(P_0,...,P_m)$ in the symmetric Clifford system by
$$P_0(u,v):=(u,-v),~P_1(u,v):=(v,u),~P_{1+{\alpha}}(u,v):=(E_{\alpha}v,-E_{\alpha}u),~u,v\in \mathbb{R}^l.$$
where $E_1,...,E_{m-1}$ are skew-symmetric endomorphisms of  $\R^l$ with
$E_{\alpha}E_{\beta}+E_{\beta}E_{\alpha}=-2\delta_{\alpha\beta}Id.$

Thus for any point $z=(z_1, z_2)\in S^{2l-1}(1)\subset\R ^l\oplus \R^l$, $z\in M_+$ if and only if
$$ |z_1|^2=|z_2|^2=\frac{1}{2}, ~~ \langle z_1, z_2\rangle=0,~~~~~ \langle E_{\alpha} z_1, z_2\rangle=0,\,\,\,\, \forall ~\alpha=1,\ldots,m-1. $$
Moreover,  $X=(x_1, x_2)\in T_zM_+$ if and only if
\begin{equation}\label{tangent vector of M+}
\left\{\begin{array}{ll}
\langle x_1, z_1\rangle=\langle x_2, z_2\rangle=0,\\
\langle x_1, z_2\rangle+\langle x_2, z_1\rangle=0,\\
\langle x_1, E_{\alpha}z_2\rangle-\langle x_2, E_{\alpha}z_1\rangle=0, \quad\forall~\alpha=1,\ldots,m-1.
\end{array}
\right.
\end{equation}

Now we take $X=(c,0)$ and $Y=(0, c)\subset\R ^l\oplus \R^l$. It follows easily from (\ref{tangent vector of M+})
that $X, Y\in T_zM_+$ if and only if
\begin{equation}\label{XY}
\langle c, ~z_1\rangle=\langle c, ~z_2\rangle=0, ~~{\rm{and}}~~ \langle c, ~E_{\alpha}z_1\rangle=\langle c, ~E_{\alpha}z_2\rangle=0,  ~~\forall~\alpha=1,\ldots,m-1.
\end{equation}
Obviously, when $l>2m$, we can always choose a unit vector $c\in\R^l$ such that (\ref{XY}) is fulfilled, and furthermore,
\begin{equation*}
\left\{\begin{array}{ll}
P_0X=X,~~ P_0Y=-Y, ~~\langle P_1X, ~Y\rangle=1,\\
\langle P_{\alpha}X, ~X\rangle=\langle P_{\alpha}Y, ~Y\rangle=0, \quad\forall ~~\alpha=1,\ldots, m,\\
\langle P_{\alpha}X, ~Y\rangle=0, \quad \forall~~ \alpha\neq 1.
\end{array}
\right.
\end{equation*}
Therefore, the sectional curvature
$$K(X, Y)=1+\langle P_0X, X\rangle\langle P_0Y, Y\rangle - \langle P_1X, Y\rangle^2=-1<0.$$
\end{proof}

With Lemma \ref{lemma 1} in mind, analyzing the conditions $m\geq 1$, $k\delta(m)-m-1\geq 1$ and $l=k\delta(m)\leq 2m$, we find that there are only the following cases to consider:
$$(m_1,m_2)=(2,1), (4,3), (5,2), (6,1), (8,7), \rm{and } ~~(9,6).$$

According to \cite{FKM81},  the families with $(m_1, m_2)=(2, 1), (6, 1), (5, 2)$ and the indefinite one of $(4, 3)$-families are congruent to those with $(m_1, m_2)=(1, 2)$, $(1, 6)$, $(2, 5)$ and $(3, 4)$, respectively. In these cases, we can leave the proof to 3.1.2.
But a direct proof reveals more geometric properties sometimes.
\vspace{2mm}

\noindent
$(1)$  \emph{\textbf{$M_+$ with $(m_1, m_2)=(2, 1)$}.} In this case, it can be seen directly that $M_+$ is isometric to $U(2)$ with a bi-invariant metric, thus the sectional curvature is non-negative.
\vspace{2mm}

%\begin{rem}
%We can also leave the focal submanifold $M_+$ with multiplicities $(2, 1), (6, 1)$ to the $M_-$ with multiplicities $(1, 2), (1, 6)$ in $4.1.2.$ with a unified proof.
%\end{rem}

\noindent
$(2)$  \emph{\textbf{$M_+$ with $(m_1, m_2)=(4, 3)$ in the definite case}. }
According to \cite{QT16}, $M_+$ in this case is isometric to $Sp(2)$ with a bi-invariant metric, thus the sectional curvature is non-negative.

\vspace{2mm}

\noindent
$(3)$  \emph{\textbf{$M_+$ with $(m_1, m_2)=(4, 3)$ in the indefinite case}. }
Setting $P = P_0P_1P_2P_3$, it is easy to see that $P$ is symmetric and $P^2$ = $Id.$ Then following Theorems 5.1 and 5.2 in \cite{FKM81}, we can find a point $x\in M_+$ as the +1-eigenvector of $P$ , i.e. $P_0P_1P_2P_3x = x.$ Then the tangent space $T_xM_+$ can be decomposed into
$T_xM_+=\mathcal{V}_x\oplus \mathcal{W}_x$ with (c.f. \cite{TY15})
$$\mathcal{V}_x=Span\{P_0P_1x, P_0P_2x, P_0P_3x, P_0P_4x, P_1P_4x, P_2P_4x, P_3P_4x\}$$ and
$$\mathcal{W}_x=Span\{ P_0P_1P_4x, P_0P_2P_4x, P_0P_3P_4x\}.$$
Let
$$X:=(P_0P_1P_4x+P_1P_4x)\big/\sqrt{2},\quad Y:=(P_0P_2P_4x-P_2P_4x)\big/\sqrt{2}.$$
Then we obtain
\begin{equation*}
\left\{\begin{array}{ll}
|X|=|Y|=1, \quad \langle X, ~Y\rangle=0\\
P_0X=X,~~ P_0Y=-Y, ~~\langle P_3X, ~Y\rangle=-1,\\
\langle P_{\alpha}X, ~X\rangle=\langle P_{\alpha}Y, ~Y\rangle=0, \quad\forall ~~\alpha\neq 0.\\
\langle P_{\alpha}X, ~Y\rangle=0\quad \forall~~ \alpha\neq 3.
\end{array}
\right.
\end{equation*}
Therefore, the sectional curvature
$$K(X, Y)=1+\langle P_0X, X\rangle\langle P_0Y, Y\rangle - \langle P_3X, Y\rangle^2=-1<0.$$
%\vspace{1mm}

\noindent
$(4)$  \emph{\textbf{$M_+$ with $(m_1, m_2)=(5, 2)$}. }
Choose $x\in S^{15}(1)$ as a common eigenvector of the commuting $4$-products
$P_0P_1P_2P_3$ and $P_0P_1P_4P_5$. Without loss of generality, we assume $P_0P_1P_4P_5x=x$.  According to \cite{TY15}, the tangent space $T_xM_+$ can be decomposed into
$T_xM_+=\mathcal{V}_x\oplus \mathcal{W}_x$ with
$$\mathcal{V}_x=Span\{P_0P_1x, P_0P_2x, P_0P_3x, P_0P_4x, P_0P_5x, P_2P_4x, P_2P_5x\}$$ and
$$\mathcal{W}_x=Span\{ P_0P_2P_4x, P_0P_2P_5x\}.$$
Let
$$X:=(P_0P_2P_4x-P_2P_4x)\big/\sqrt{2},\quad Y:=(P_0P_2P_5x+P_2P_5x)\big/\sqrt{2}.$$
Then we have
\begin{equation*}
\left\{\begin{array}{ll}
|X|=|Y|=1, \quad \langle X, ~Y\rangle=0\\
P_0X=-X,~~ P_0Y=Y, ~~\langle P_1X, ~Y\rangle=1,\\
\langle P_{\alpha}X, ~X\rangle=\langle P_{\alpha}Y, ~Y\rangle=0, \quad\forall ~~\alpha\neq 0,\\
\langle P_{\alpha}X, ~Y\rangle=0\quad \forall~~ \alpha\neq 1.
\end{array}
\right.
\end{equation*}
and thus the sectional curvature
$$K(X, Y)=1+\langle P_0X, X\rangle\langle P_0Y, Y\rangle - \langle P_1X, Y\rangle^2=-1<0.$$

\vspace{2mm}

\noindent
\emph{\textbf{An alternative proof for $M_+$ in the indefinite $(4,3)$ case.}}
For any symmetric Clifford system $P_0,\ldots, P_5$ on $\R^{16}$ in the $(5,2)$ case, we can remove any one of $P_0,\ldots, P_5$ to obtain a new  symmetric Clifford system on $\R^{16}$, which is indefinite. The reason is that any definite symmetric Clifford system with $Q_0\cdots Q_4=\pm Id$ can not be extended to $Q_5$.

As in the case (4), we still choose $x\in S^{15}(1)$ as a common eigenvector of the commuting $4$-products $P_0P_1P_2P_3$ and $P_0P_1P_4P_5$. Then we remove any one of $P_2, P_3, P_4, P_5$ from $P_0,\ldots, P_5$, and still obtain that $x\in M_+$ of the indefinite $(4,3)$ family. Choosing the same $X, Y$ as in Case (3), we still obtain $K(X, Y)=-1<0$.

We'll take a similar approach to the $(8.7)$ indefinite case later.

\vspace{2mm}

\noindent
$(5)$  \emph{\textbf{$M_+$ with $(m_1, m_2)=(6, 1)$}.} In this case, we need only to consider $M_-$ of OT-FKM type with $(m_1, m_2)=(1, 6)$. As showed by \cite{TY13}, $M_-$ with $(m_1, m_2)=(1, 6)$ is isometric to $(S^1(1)\times S^7(1))\big/\mathbb{Z}_2$, thus the sectional curvature is non-negative.
\vspace{2mm}

\noindent
$(6)$  \emph{\textbf{$M_+$ with $(m_1, m_2)=(8, 7)$ in the definite case}. }
For $(u,v)\in \R^{32}=\mathbb{O}^4$ and $u=(u_1, u_2), v=(v_1, v_2)\in \mathbb{O}\oplus\mathbb{O}$, we construct a symmetric Clifford system $P_0,\ldots, P_8$ on $\R^{32}$ as follows
\begin{equation*}%\label{clifford system}
P_0(u ,v)=(u, -v),\quad, P_1(u, v)=(v, u),\quad P_{1+\alpha}(u, v)=(E_{\alpha}v, -E_{\alpha}u),
\end{equation*}
%with $e_{\alpha}u=e_{\alpha}(u_1, u_2):=(e_{\alpha}u_1,e_{\alpha}u_2)$
where $E_{\alpha}$ acts on $u$ or $v$ in this way:
$$E_{\alpha}u=(e_{\alpha}u_1, e_{\alpha}u_{2}),~\alpha=1,\cdots,7,$$
and $\{1,e_1,e_2,\cdots, e_7\}$ is the standard orthonormal basis of the Octonions (Cayley numbers) $\mathbb{O}$.

In fact, let $1=(1, 0), e_1=(i, 0), e_2=(j, 0), e_3=(k, 0), e_4=(0, 1), e_5=(0, i), e_6=(0, j), e_7=(0, k)\in\mathbb{H}\times\mathbb{H}$.
Recalling the Cayley-Dickson construction of the product of Octonions $\mathbb{O}\cong \mathbb{H}\times\mathbb{H}$:
\begin{eqnarray*}
\mathbb{O}\times \mathbb{O}&\longrightarrow&\mathbb{O}\\
(a,b),~(c,d)&\mapsto&(a,b)\cdot (c,d) =: (ac-\bar{d}b, ~ da+b\bar{c}),
\end{eqnarray*}
 one can see easily that $e_1(e_2(\cdots(e_7z)))=-z$, $\forall~ z\in\mathbb{O}$. Then it
follows immediately that $P_0, \ldots, P_8$ is a definite system.

Take $x=\frac{1}{\sqrt{2}}(1, 0, 0, 1)$, and
$$X:=\frac{1}{2}(e_2, e_3, e_3, -e_2), \quad Y:=\frac{1}{2}(-e_7, e_6, e_6, e_7).$$
Clearly, $x\in M_+$ and $X, Y\in T_xM_+$. Moreover
\begin{equation*}
\left\{\begin{array}{ll}
|X|=|Y|=1, \quad \langle X, ~Y\rangle=0\\
P_2X=-X,~~ P_2Y=Y, ~~\langle P_5X, ~Y\rangle=1,\\
\langle P_{\alpha}X, ~X\rangle=\langle P_{\alpha}Y, ~Y\rangle=0, \quad\forall ~~\alpha\neq 2,\\
\langle P_{\alpha}X, ~Y\rangle=0\quad \forall~~ \alpha\neq 5.
\end{array}
\right.
\end{equation*}
Therefore, the sectional curvature
$$K(X, Y)=1+\langle P_2X, X\rangle\langle P_2Y, Y\rangle - \langle P_5X, Y\rangle^2=-1<0.$$

\vspace{2mm}

To facilitate the expression, we deal with the case $(9, 6)$ before the indefinite $(8,7)$ case.
\vspace{2mm}

\noindent
$(7)$  \emph{\textbf{$M_+$ with $(m_1, m_2)=(9, 6)$}. }

For $(u,v)\in \R^{32}=\mathbb{O}^4$ and $u=(u_1, u_2), v=(v_1, v_2)\in \mathbb{O}\oplus\mathbb{O}$, we construct a symmetric Clifford system $P_0,\ldots, P_9$ on $\R^{32}$ as follows
\begin{equation*}%\label{clifford system}
\left\{\begin{array}{ll}
P_0(u ,v)=(u, -v), ~~P_1(u, v)=(v, u),\\
P_{1+\alpha}(u, v)=(E_{\alpha}v, -E_{\alpha}u) ~(\alpha=1,\ldots, 7),\\
P_9(u, v)=(Jv, -Ju).
\end{array}
\right.
\end{equation*}
%with $e_{\alpha}u=e_{\alpha}(u_1, u_2):=(e_{\alpha}u_1,e_{\alpha}u_2)$
$E_{\alpha}$ acts on $u$ or $v$ in this way
$$E_{\alpha}u=(e_{\alpha}u_1, -e_{\alpha}u_{2}),~\alpha=1,\cdots,7,$$
where $\{1,e_1,e_2,\cdots, e_7\}$ is the standard orthonormal basis of the Octonions $\mathbb{O}$
and $J$ acts on $u, v$ by
 $$Ju=J(u_1, u_2):=(u_2, -u_1).$$

Take $x=\frac{1}{\sqrt{2}}(1, 0, 0, e_1)$, and
$$X:=(e_2, 0, 0, 0), \quad Y:=(0, 0, 0, e_2).$$
It is easy to see that $x\in M_+$ and $X, Y\in T_xM_+$. Moreover
\begin{equation*}
\left\{\begin{array}{ll}
|X|=|Y|=1, \quad \langle X, ~Y\rangle=0\\
P_0X=X,~~ P_0Y=-Y, ~~\langle P_9X, ~Y\rangle=1,\\
\langle P_{\alpha}X, ~X\rangle=\langle P_{\alpha}Y, ~Y\rangle=0, \quad\forall ~~\alpha\neq 0,\\
\langle P_{\alpha}X, ~Y\rangle=0\quad \forall~~ \alpha\neq 9.
\end{array}
\right.
\end{equation*}
Therefore, the sectional curvature
$$K(X, Y)=1+\langle P_0X, X\rangle\langle P_0Y, Y\rangle - \langle P_9X, Y\rangle^2=-1<0.$$

\noindent
$(8)$  \emph{\textbf{$M_+$ with $(m_1, m_2)=(8, 7)$ in the indefinite case}. }
On $\R^{32}$, we have a symmetric Clifford system $P_0,\ldots, P_9$ as that in (7).
Similarly as in the alternative proof for the indefinite $(4,3)$ case, removing $P_1$ from $P_0,\ldots, P_9$, we obtain a new symmetric Clifford system $Q_0=P_0, Q_{\alpha}=P_{1+\alpha}$ $(\alpha=1,\ldots,8)$ on $\R^{32}$. It is direct to see
$$Q_0\cdots Q_8(u,v)=(-Ju, Jv)=(-u_2, u_1, v_2, -v_1),$$
and thus $Q_0,\ldots, Q_8$ is an indefinite system.

Taking $x$, $X, Y$ the same with those in (7), we obtain the sectional curvature
$$K(X, Y)=1+\langle Q_0X, X\rangle\langle Q_0Y, Y\rangle - \langle Q_8X, Y\rangle^2=-1<0.$$

\vspace{1mm}

\noindent
\subsubsection{\textbf{$M_-$ of OT-FKM type.}}

As we mentioned in $(5)$ of last subsection, when $m_1=m=1$, \cite{TY13} showed that $M_-$ of OT-FKM type is isometric to $S^1(1)\times S^{l-1}(1)\big/\mathbb{Z}_2$. Thus the sectional curvature of $M_-$ in this case is non-negative.

We recall some basic properties of $M_-$. Given $x\in M_-$, there always exists $P$ in the unit sphere $\Sigma(P_0,\cdots, P_m)$ spanned by $P_0,\ldots, P_m$ such that $Px=x$. Denote $Q_0=P$, one can extend it to such a symmetric Clifford system $\{Q_0, \ldots, Q_m\}$ with $Q_i~ (i\geq 1)$ perpendicular to $Q_0$ and $\Sigma(Q_0,\cdots, Q_m)=\Sigma(P_0,\cdots, P_m)$. Choosing $\eta_1, \eta_2, \cdots, \eta_{l-m}$ as an orthonormal basis of $T^{\perp}_xM_-$ in $S^{2l-1}(1)$, Lemma 2.1 of \cite{TY15} reveals that for any $1\leq i\leq m$,
\begin{equation}\label{basis}
\{Q_i\eta_1,\cdots,Q_i\eta_{l-m},~~Q_1x,\cdots,Q_mx,~~Q_iQ_1x,\cdots,\widehat{Q_iQ_{i}x},\cdots,Q_iQ_mx\}
\end{equation}
constitute an orthonormal basis of $T_xM_-$. Moreover, we can decompose $A_{\alpha}X$ as
\begin{equation}\label{A}
A_{\alpha}X=\sum_{i=1}^m(\langle X, Q_ix\rangle Q_i\eta_{\alpha} + \langle X, Q_i\eta_{\alpha}\rangle Q_ix).
\end{equation}

When $m\geq 2$, we take $X=(Q_ix+Q_i\eta_{\alpha})\big/\sqrt{2}$, $Y=(Q_jx-Q_j\eta_{\alpha})\big/\sqrt{2}$ with $i, j>0, i\neq j$. A direct calculation by virtue of (\ref{A}) leads to $$\langle X, Y\rangle=0, ~~|X|=|Y|=1, ~~\langle A_{\beta} X, X\rangle=\delta_{\beta\alpha}, ~~\langle A_{\beta} Y, Y\rangle=-\delta_{\beta\alpha}.$$
Therefore, the sectional curvature
$$K(X, Y)=1+\sum_{\beta=1}^{l-m}\langle A_{\beta}X,
X\rangle\langle A_{\beta}Y, Y\rangle -\sum_{\beta=1}^{l-m} \langle A_{\beta}X,
Y\rangle^2=-\sum_{\beta=1}^{l-m} \langle A_{\beta}X,
Y\rangle^2.$$
Suppose $K(X, Y)\geq 0$. Then $\langle A_{\beta}X, Y\rangle=0$ for any $\beta=1,\ldots, l-m.$ Equivalently, $\langle A_{\beta}X, Q_jx\rangle=\langle A_{\beta}X, Q_j\eta_{\alpha}\rangle$ for any $j\neq i$. Furtherer, combining with (\ref{A}), we could derive that
$$\langle Q_iQ_j\eta_{\beta}, \eta_{\alpha}\rangle=\langle Q_jQ_i\eta_{\beta}, \eta_{\alpha}\rangle, ~~\forall~ i\neq j, ~~\forall~ \alpha, \beta,$$ which leads to
$$\langle Q_j\eta_{\beta},  Q_i\eta_{\alpha}\rangle=0, ~~\forall~ i\neq j, ~~\forall~ \alpha, \beta.$$
By conjunction with (\ref{basis}), this leads to $l-m-1\leq 0,$ which contradicts $m_2=l-m-1\geq 1.$

In conclusion, the sectional curvature of $M_-$ of OT-FKM type with $m\geq 2$ is not non-negative.

\vspace{3mm}

\noindent
\subsubsection{\textbf{ Focal submanifolds with $g=4$ and $(m_1, m_2)=(2,2)$.}}
According to \cite{QTY13}, one focal submanifold is diffeomorphic to the oriented Grassmann manifold $\widetilde{G_2}(\R^5)$, which is Einstein, and the other is diffeomorphic to $\C P^3$.

We first deal with the focal submanifold $M_-$ diffeomorphic to $\widetilde{G_2}(\R^5)=\frac{SO(5)}{SO(2)\times SO(3)}$.  As mentioned in Remark 4.1 of \cite{QTY13},  the induced metric on $M_-$ from the Euclidean space $\mathbb{R}^{10}$ is the unique invariant metric on the compact irreducible symmetric space
$\widetilde{G}_2(\mathbb{R}^5)$, because $M_-\subset \mathbb{R}^{10}$ is just the standard Pl{\"u}cker embedding of $\widetilde{G}_2(\mathbb{R}^5)$ into $\mathbb{R}^{10}$ (\cite{Sol92}). Thus the sectional curvature of $M_-$ diffeomorphic to $\widetilde{G_2}(\R^5)$ is non-negative.

As for the other focal submanifold $M_+$ diffeomorphic to $\C P^3$, we follow 4.1 (2) of \cite{QTY13}. Choosing a point $e'\in so(5, \mathbb{R})$ with coordinates $a_{12}=a_{34}=\frac{1}{\sqrt{2}}$ and zero otherwise, they gave the components of the second fundamental form of $M_+$ at $e'$ as follows
$$s_0=x_1^2+x_2^2-y_1^2-y_2^2,\quad
s_1=2(x_1y_1+x_2y_2),\quad
s_2=2(x_2y_1-x_1y_2),$$
where $\{x_1, x_2, y_1, y_2, z_1, z_2\}$ are the tangent coordinates. Polarize $s_0, s_1, s_2$ and take $X=(0,0,\frac{1}{\sqrt{2}}, \frac{1}{\sqrt{2}}, 0,0)$, $Y=(\frac{1}{\sqrt{2}}, \frac{1}{\sqrt{2}}, 0,0,0,0)$. It is easy to see that
\begin{equation*}
\left\{\begin{array}{ll}
\langle A_0X, ~X\rangle=-1, \,\,\, \langle A_0Y, Y\rangle=1,\\
\langle A_{\alpha}X, ~X\rangle=\langle A_{\alpha}Y, ~Y\rangle=0, \quad\alpha=1,2,\\
\langle A_1X, ~Y\rangle=1,\,\,\, \langle A_{\alpha}X, ~Y\rangle=0,\quad \alpha=0,2.
\end{array}
\right.
\end{equation*}
Therefore, the sectional curvature
$$K(X, Y)=1+\langle A_0X, X\rangle\langle A_0Y, Y\rangle -\langle A_1X, Y\rangle^2=-1<0.$$

\vspace{1mm}

\noindent
\subsubsection{\textbf{ Focal submanifolds with $g=4$ and $(m_1, m_2)=(4,5)$.}}
In this case, we also follow \cite{QTY13}, where they gave explicit components of the second fundamental form.

For the focal submanifold $M_+^{14}$, choosing a point $e\in so(5, \mathbb{C})$ with coordinates $a_{12}=a_{34}=\frac{1}{\sqrt{2}}$ and zero otherwise, the components of the second fundamental form of $M_+^{14}$ at $e$  are given by
\begin{eqnarray*}
&&s_0=x_{1}^2+\cdots +x_5^2-y_1^2-\cdots-y_5^2,\\
&&s_1=2(x_1y_1+\cdots+x_4y_4) +\sqrt{2}(x_5+y_5)z_1,\\
&&s_2=2(x_2y_1-x_1y_2)+2(x_3y_4-x_4y_3)+\sqrt{2}(x_5+y_5)z_2,\\
&&s_3=2(x_3y_1-x_1y_3)+2(x_4y_2-x_2y_4)+\sqrt{2}(x_5+y_5)z_3,\\
&&s_4=2(x_2y_3-x_3y_2)+2(x_4y_1-x_1y_4)+\sqrt{2}(x_5+y_5)z_4,
\end{eqnarray*}
where $(x_1,...,x_5,y_1,...,y_5,z_1,...,z_4)$ are the tangent coordinates.
Polarize $s_0, \ldots, s_4$ and take $X=(1,0,\ldots,0)$ with $x_1=1$ and $Y=(0,\ldots,0,1,0\ldots,0)$ with $y_1=1$. It is easy to see that
\begin{equation*}
\left\{\begin{array}{ll}
\langle A_0X, ~X\rangle=1, \,\,\, \langle A_0Y, Y\rangle=-1,\\
\langle A_{\alpha}X, ~X\rangle=\langle A_{\alpha}Y, ~Y\rangle=0, \quad\forall\alpha\neq 0,\\
\langle A_1X, ~Y\rangle=1,\,\,\, \langle A_{\alpha}X, ~Y\rangle=0,\quad \forall\alpha\neq 1
\end{array}
\right.
\end{equation*}
Therefore, the sectional curvature
$$K(X, Y)=1+\langle A_0X, X\rangle\langle A_0Y, Y\rangle -\langle A_1X, Y\rangle^2=-1<0.$$

For the focal submanifold $M_-^{13}$, choosing a point $e'\in so(5, \mathbb{C})$ with coordinates $a_{12}=-a_{21}=1$ and zero otherwise, the components of the second fundamental form of $M_-^{13}$ at $e'$  are given by
\begin{eqnarray*}
s_0&=& -2x_{14}x_{23}+2x_{13}x_{24}+2y_{14}y_{23}-2y_{13}y_{24},\\
s_1 &=& -2x_{15}x_{23}+2x_{13}x_{25}+2y_{15}y_{23}-2y_{13}y_{25}, \\
s_2 &=& -2x_{15}x_{24}+2x_{14}x_{25}+2y_{15}y_{24}-2y_{14}y_{25}, \\
s_3 &=& -2x_{14}x_{23}+2x_{13}y_{24}-2y_{14}x_{23}+2y_{13}x_{24}, \\
s_4 &=& -2x_{15}y_{23}+2x_{13}y_{25}-2y_{15}x_{23}+2y_{13}x_{25}, \\
s_5 &=& -2x_{15}y_{24}+2x_{14}y_{25}-2y_{15}x_{24}+2y_{14}x_{25}.
\end{eqnarray*}
where  $\{y_{12},\ x_{13},\ y_{13},$ $x_{14},\ y_{14},\ x_{15},\ y_{15},\ x_{23},$ $y_{23},\ x_{24},\ y_{24},\ x_{25},\ y_{25}\}$ are the tangent coordinates.
Polarize $s_0, \ldots, s_5$ and take $X$ with $x_{14}=x_{23}=\frac{1}{\sqrt{2}}$ and zero otherwise, $Y$ with $x_{13}=x_{24}=y_{24}=\frac{1}{\sqrt{3}}$ and zero otherwise. It is easy to see that
\begin{equation*}
\left\{\begin{array}{ll}
\langle A_0X, ~X\rangle=-1, ~~\langle A_3X, ~X\rangle=-1, ~~\langle A_{\alpha}X, ~X\rangle=0, ~~\forall \alpha\neq 0,3,\\
\langle A_0Y, Y\rangle=\frac{2}{3},~~\langle A_3Y, Y\rangle=\frac{2}{3}, ~~\langle A_{\alpha}Y, Y\rangle=0,~~ \forall \alpha\neq 0,3.
\end{array}
\right.
\end{equation*}
Therefore, the sectional curvature
$$K(X, Y)=1+\langle A_0X, X\rangle\langle A_0Y, Y\rangle +\langle A_3X, X\rangle\langle A_3Y, Y\rangle-\sum_{\alpha=0}^5\langle A_{\alpha}X, Y\rangle^2\leq -\frac{1}{3}<0.$$

\noindent
\subsubsection{\textbf{ Focal submanifolds with $g=6$ and $(m_1, m_2)=(1, 1)$.}}

Given $p\in M_+^{5} \subset S^7$, with respect to a suitable tangent orthonormal basis $e_1, \ldots, e_5$ of $T_pM_+$, Miyaoka \cite{Miy93} showed that the shape operators of $M_+$ are given by
\begin{equation}\label{M+5}
A_0=  \left(
\begin{smallmatrix}
 \sqrt{3}& & & & \\
 & \frac{1}{\sqrt{3}}&  & &\\
& & 0 &&\\
  &&  & -\frac{1}{\sqrt{3}}&\\
  &&&&-\sqrt{3}
\end{smallmatrix}\right),\quad
 A_1=  \left(\begin{smallmatrix}
 & & &   &  \sqrt{3} \\
 & & & -\frac{1}{\sqrt{3}} & \\
 & & 0&  & \\
&-\frac{1}{\sqrt{3}} &  & &  \\
\sqrt{3}  & & & &
\end{smallmatrix}\right).
\end{equation}
A direct calculation leads to
$$K(e_1, e_5)=1+\sum_{\alpha=0}^1\langle A_{\alpha}e_1, e_1\rangle\langle A_{\alpha}e_5, e_5\rangle-\sum_{\alpha=0}^1\langle A_{\alpha}e_1, e_5\rangle^2=-5<0.$$

Similarly, for the focal submanifold $M_-^5$, the shape operators of $M_-$ are given by
\begin{equation}\label{M-5}
A_0=  \left(
\begin{smallmatrix}
 \sqrt{3}& & & & \\
 & \frac{1}{\sqrt{3}}&  & &\\
& & 0 &&\\
  &&  & -\frac{1}{\sqrt{3}}&\\
  &&&&-\sqrt{3}
\end{smallmatrix}\right),\quad
 A_1=  \left(\begin{smallmatrix}
0 &-1 & 0 & 0  &  0 \\
-1 &0 &0 & \frac{2}{\sqrt{3}} & 0\\
0 &0 & 0& 0 &0 \\
0&\frac{2}{\sqrt{3}} & 0 &0 &-1  \\
0 & 0& 0&-1 &0
\end{smallmatrix}\right).
\end{equation}
A direct calculation leads to
$$K(e_1, e_5)=1+\sum_{\alpha=0}^1\langle A_{\alpha}e_1, e_1\rangle\langle A_{\alpha}e_5, e_5\rangle-\sum_{\alpha=0}^1\langle A_{\alpha}e_1, e_5\rangle^2=-2<0.$$

\noindent
\subsubsection{\textbf{ Focal submanifolds with $g=6$ and $(m_1, m_2)=(2,2)$.}}
Given $p\in M_+^{10} \subset S^{13}$, with respect to a suitable tangent orthonormal basis $e_1, \ldots, $ of $T_pM_+$, Miyaoka \cite{Miy13} showed that the shape operators of $M_+$ are given by
\begin{equation}\label{M+10}
A_0=  \left(
\begin{smallmatrix}
 \sqrt{3}I& & & & \\
 & \frac{1}{\sqrt{3}}I&  & &\\
& & 0 &&\\
  &&  & -\frac{1}{\sqrt{3}}I&\\
  &&&&-\sqrt{3}I
\end{smallmatrix}\right),\quad
A_1=  \left(
\begin{smallmatrix}
 & & & & \sqrt{3}J\\
 &&  &  \frac{1}{\sqrt{3}}J&\\
& & 0 &&\\
  & -\frac{1}{\sqrt{3}}J&  &&\\
 -\sqrt{3}J &&&&
\end{smallmatrix}\right)\end{equation}
and
$$
A_2=  \left(
\begin{smallmatrix}
& & & &  \sqrt{3}I\\
 & &  & \frac{1}{\sqrt{3}}I&\\
& & 0 &&\\
  &\frac{1}{\sqrt{3}}I&  & &\\
\sqrt{3}I  &&&&
\end{smallmatrix}\right), ~~\rm{where}~~~I=\left(
\begin{smallmatrix}
1&0\\
0&1
\end{smallmatrix}\right), J=\left(
\begin{smallmatrix}
0&-1\\
-1&0
\end{smallmatrix}\right)$$
A direct calculation leads to
$$K(e_1, e_{10})=1+\sum_{\alpha=0}^2\langle A_{\alpha}e_1, e_1\rangle\langle A_{\alpha}e_{10}, e_{10}\rangle-\sum_{\alpha=0}^2\langle A_{\alpha}e_1, e_{10}\rangle^2=-5<0.$$

Similarly, for the focal submanifold $M_-^{10}$, the shape operators of $M_-$ are given by
\begin{equation}\label{M-10}
A_0=  \left(
\begin{smallmatrix}
 \sqrt{3}I& & & & \\
 & \frac{1}{\sqrt{3}}I&  & &\\
& & 0 &&\\
  &&  & -\frac{1}{\sqrt{3}}I&\\
  &&&&-\sqrt{3}I
\end{smallmatrix}\right),\quad
 A_1=  \left(\begin{smallmatrix}
0 &-I & 0 & 0  &  0 \\
-I &0 &0 & \frac{2}{\sqrt{3}}I & 0\\
0 &0 & 0& 0 &0 \\
0&\frac{2}{\sqrt{3}}I & 0 &0 &-I  \\
0 & 0& 0&-I &0
\end{smallmatrix}\right).\end{equation}
and
$$ A_2=  \left(\begin{smallmatrix}
0 &J & 0 & 0  &  0 \\
-J &0 &0 & -\frac{2}{\sqrt{3}}J & 0\\
0 &0 & 0& 0 &0 \\
0&\frac{2}{\sqrt{3}}J & 0 &0 & J \\
0 & 0& 0&-J &0
\end{smallmatrix}\right)$$
A direct calculation leads to
$$K(e_1, e_{10})=1+\sum_{\alpha=0}^2\langle A_{\alpha}e_1, e_1\rangle\langle A_{\alpha}e_{10}, e_{10}\rangle-\sum_{\alpha=0}^2\langle A_{\alpha}e_1, e_{10}\rangle^2=-2<0.$$

\end{proof}

\vspace{2mm}

\subsection{Ricci curvature.}

As we introduced before, the Ricci curvature of an isoparametric hypersurface $M^n$ in $S^{n+1}(1)$ with $g=1$ is obviously positive and that with $g=2$ is positive unless it is $S^1(r_1)\times S^{n-1}(r_2)$, where the Ricci curvature could be zero.

For other cases, we derive the following proposition on Ricci curvature of an isoparametric hypersurface

\begin{prop}\label{Ricci of hyp}
For an isoparametric hypersurface $M^n$ in $S^{n+1}(1)$, we have
\begin{itemize}
\item[(i)] When $g=3, m=1$, the Ricci curvature is not non-negative;
when $g=3, m>1$, the Ricci curvature is positive if $M^n$ is close to the minimal isoparametric hypersurface;
\item[(ii)] When $g=4, m_1=1$ or $m_2=1$, the Ricci curvature is not non-negative;
when $g=4, m_1, m_2\geq 2$ the Ricci curvature is positive if $M^n$ is close to the minimal isoparametric hypersurface;
\item[(iii)] When $g=6, m=1$, the Ricci curvature is not non-negative;
when  $g=6, m=2$, the Ricci curvature is not non-negative if $M^n$ is close to the minimal isoparametric hypersurface.
\end{itemize}
\end{prop}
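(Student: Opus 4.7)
All three parts follow from the Gauss-equation computation for hypersurfaces in $S^{n+1}(1)$. Since the shape operator of an isoparametric hypersurface is diagonalized in principal directions, for unit principal vectors $e_i,e_k$ one has $K(e_i,e_k)=1+\lambda_i\lambda_k$, and summing over an orthonormal basis of $e_i^{\perp}$ yields the master formula
$$
\mathrm{Ric}(e_i)=(n-1)+\lambda_i H-\lambda_i^2,\qquad H:=\sum_{j=1}^g m_j\lambda_j,
$$
where $H$ is the (un-normalized) mean curvature, vanishing exactly on the minimal leaf. With $\lambda_j=\cot(\theta+(j-1)\pi/g)$ the right-hand side is an explicit function of the family parameter $\theta\in(0,\pi/g)$.

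\textbf{Positive Ricci near the minimal hypersurface.} At the minimal leaf $H=0$, so $\mathrm{Ric}(e_i)=(n-1)-\lambda_i^2$. For $g=3$ with $m\geq 2$ the minimal principal curvatures are $\{\sqrt 3,0,-\sqrt 3\}$, so $\mathrm{Ric}(e_i)\geq 3m-4>0$. For $g=4$ with $m_1,m_2\geq 2$, I would use the $\pi/2$-spacing identities $\lambda_1\lambda_3=\lambda_2\lambda_4=-1$, solve the quadratics with roots $(\lambda_1,\lambda_3)$ and $(\lambda_2,\lambda_4)$ under the minimality condition $\tan^2(2\theta)=m_1/m_2$, and check that $\max_i\lambda_i^2<n-1=2(m_1+m_2)-1$; after simplification this reduces (WLOG $m_1\leq m_2$) to the elementary inequality $\sqrt{m_2/(m_1+m_2)}<m_1-1$, which holds whenever $m_1\geq 2$. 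Continuity of $\theta\mapsto\mathrm{Ric}(e_i)$ then transfers the positivity to nearby leaves.

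\textbf{Ricci failing to be non-negative.} For $g=4$ with $m_1=1$ (or symmetrically $m_2=1$), setting $u=\cot\theta$ and using $\lambda_1\lambda_3=-1$ to eliminate $\lambda_3$ from $H$, a short algebraic simplification yields
$\mathrm{Ric}(e_1)=-2m_2\,(u^2+1)/(u^2-1),$
strictly negative throughout $\theta\in(0,\pi/4)$ since $u=\cot\theta>1$ there. For $g=3,m=1$, substituting $u=\cot^2\theta$ and using the triple-angle formula for $\cot(3\theta)$ produces $\mathrm{Ric}(e_1)=-2(u+1)/(3u-1)$, negative on $(0,\pi/3)$ because $u>1/3$ there. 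For $g=6,m=2$, a direct evaluation at $\theta=\pi/12$ gives $\mathrm{Ric}(e_1)=11-(2+\sqrt 3)^2=4-4\sqrt 3<0$, and continuity extends the conclusion to nearby leaves.

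\textbf{Main obstacle --- $g=6,m=1$ for the whole family.} Here continuity alone is insufficient: I need some $\mathrm{Ric}(e_i)<0$ at every member of the family. The key ingredients are the trigonometric identities $\sum_{k=0}^{g-1}\cot(\theta+k\pi/g)=g\cot(g\theta)$ and $\sum_{k=0}^{g-1}\cot^2(\theta+k\pi/g)=g^2\csc^2(g\theta)-g$, which give $H=6\cot(6\theta)$ and $\mathrm{tr}(A^2)=36\csc^2(6\theta)-6$. Then
$$
\mathrm{scal}(M)=n(n-1)+H^2-\mathrm{tr}(A^2)=30+36\cot^2(6\theta)-36\csc^2(6\theta)+6\equiv 0,
$$
so $\sum_{i=1}^6\mathrm{Ric}(e_i)\equiv 0$ along the entire family. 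On the other hand $\mathrm{Ric}(e_i)=-\lambda_i^2+H\lambda_i+5$ is a non-constant quadratic in $\lambda_i$, whereas the six values $\lambda_1,\ldots,\lambda_6$ are pairwise distinct: at most two of them can share a common Ricci value, so the six Ricci values cannot all vanish simultaneously. The zero-sum identity then forces at least one $\mathrm{Ric}(e_i)$ to be strictly negative. Making this rigidity---six distinct cotangents versus a quadratic with at most two roots---fully watertight across the whole range of $\theta$ is the part of the argument I expect to need the most care.
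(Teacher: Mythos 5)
Your proposal is correct, and for every case except $g=6$, $m=1$ it follows the paper's own route: the same formula $\mathrm{Ric}(e_i)=n-1+\lambda_i H-\lambda_i^2$, evaluation at the minimal leaf plus continuity for the positivity statements, and the same closed-form expressions — your $-2m_2(u^2+1)/(u^2-1)$ for $g=4$, $m_1=1$ matches the paper's, and your $-2(u+1)/(3u-1)$ for $g=3$, $m=1$ is exactly the paper's $-\tfrac{2}{3}-\tfrac{8}{3(3\cot^2\theta-1)}$ after clearing denominators; your reduction of the $g=4$, $m_1,m_2\ge 2$ case to $\sqrt{m_2/(m_1+m_2)}<m_1-1$ is algebraically equivalent to the paper's bound $\lambda_1^2<2m_2+3\le n-1$. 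The one genuine divergence is $g=6$, $m=1$: the paper computes $\mathrm{Ric}(e_1)=-4(\lambda_1^2+1)(5\lambda_1^2-3)/\bigl((\lambda_1^2-3)(3\lambda_1^2-1)\bigr)$ and reads off the sign from $\lambda_1>\sqrt{3}$, whereas you observe that the scalar curvature vanishes identically along this family and argue that the six principal Ricci values, which sum to zero and are the values of the fixed non-degenerate quadratic $-\lambda^2+H\lambda+5$ at six pairwise distinct points $\lambda_i=\cot(\theta+(i-1)\pi/6)$, cannot all vanish, so one must be negative. That argument is already watertight as stated — a quadratic with leading coefficient $-1$ has at most two roots, so the "rigidity" you worry about at the end needs no further care. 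The trade-off is that the paper's computation exhibits an explicit negative direction ($e_1$ itself), while your pigeonhole argument only shows that some principal direction has negative Ricci curvature; both suffice for the claim, and yours avoids the multiple-angle cotangent algebra entirely.
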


\begin{proof}
Denote by $e_1,\ldots, e_n$ an orthonormal basis of $T_pM^n$ corresponding to principal curvatures $\lambda_1\geq \lambda_2\geq\cdots\geq\lambda_n.$
It follows from Gauss equation that
$Ric(e_i)=n-1+\lambda_i H-\lambda_i^2 ,$
where $H=\lambda_1+\cdots+\lambda_n$ is the mean curvature of $M^n$.

(i) When $g=3, m=1$, denote $\lambda_1=\cot\theta$ with $\theta\in(0, \frac{\pi}{3})$, then $H=3\cot 3\theta$, and $$Ric(e_1)=2+3\cot\theta\cot3\theta-\cot^2\theta=-\frac{2}{3}-\frac{8}{3(3\cot^2\theta-1)}<0.$$

When $g=3, m\geq 2$, we need only to prove the positivity of the Ricci curvature for the minimal isoparametric hypersurface, since the principal curvatures are continuous functions on $M^n$. Let $X=\sum_{i=1}^na_ie_i$ with $\sum_{i=1}^na_i^2=1$ be a unit tangent vector, then $Ric(X)=n-1+\sum_i\lambda_ia_i^2H-\sum_i\lambda_i^2a_i^2.$
In the minimal case, $\theta=\frac{\pi}{6}$ and
\begin{eqnarray*}
Ric(X)&=&n-1-\sum_i\lambda_i^2a_i^2\geq n-1- \max\{\cot^2\theta, \cot^2(\theta+\frac{2}{3}\pi)\}\\
&\geq&n-4=3m-4>0.
\end{eqnarray*}

(ii) When $g=4$, denote $\lambda_1=\cot\theta$ with $\theta\in (0, \frac{\pi}{4})$, then the four distinct principal curvatures can be expressed as $\lambda_1, \frac{\lambda_1-1}{\lambda_1+1}, -\frac{1}{\lambda_1}, -\frac{\lambda_1+1}{\lambda_1-1}$. Thus
\begin{equation}\label{H}
H=m_1\frac{\lambda_1^2-1}{\lambda_1}-4m_2\frac{\lambda_1}{\lambda_1^2-1}.
%(\frac{\lambda_1-1}{\lambda_1+1}-\frac{\lambda_1+1}{\lambda_1-1}).
\end{equation}

In case $m_1=1$, a direct calculation leads to 
$$Ric(e_1)=2(1+m_2)-1+\lambda_1(H-\lambda_1)=-2m_2\frac{\lambda_1^2+1}{\lambda_1^2-1}<0.$$ 
The discussion for case $m_2=1$ is similar.

In case $m_1, m_2\geq 2$, we only consider Ricci curvature of the minimal isoparametric hypersurface. By (\ref{H}), $H=0$ implies that $$\lambda_1=\cot\theta=\sqrt{\frac{m_2}{m_1}}+\sqrt{\frac{m_2}{m_1}+1},$$ 
and thus
$\lambda_1^2<2m_2+3\leq2(m_1+m_2)-1=n-1.$ Similarly, $\lambda_n^2=\cot^2(\theta+\frac{3}{4}\pi)<2m_1+3\leq n-1.$ Therefore,
$\min Ric(X)=n-1-\max\{\lambda_1^2, \lambda_n^2\}>0.$

(iii) When $g=6, m=1$, denote $\lambda_1=\cot\theta$ with $\theta\in(0, \frac{\pi}{6})$, then $H=6\cot6\theta$. It is direct to compute that $$Ric(e_1)=n-1+\lambda_1(H-\lambda_1)=-4\frac{(\lambda_1^2+1)(5\lambda_1^2-3)}{(\lambda_1^2-3)(3\lambda_1^2-1)}<0,$$ 
since $\lambda_1>\sqrt{3}$.

 When $g=6, m=2$ and $M^n$ is minimal, $H=0$ implies that $\theta=\frac{\pi}{12}$, and $\lambda_1=\cot\theta=2+\sqrt{3}=-\lambda_n$. Therefore,
$\min Ric(X)=n-1-\max\{\lambda_1^2, \lambda_n^2\}=11-(2+\sqrt{3})^2<0.$
\end{proof}

\begin{rem}
In Lemma 2 of \cite{Wu94}, the positivity of the Ricci curvature of a minimal isoparametric hypersurface in the case $g=4, m_1, m_2\geq 2$ is also discussed.
\end{rem}

Next, we consider the Ricci curvature of focal submanifolds. As we discussed in the beginning of last subsection, it is obvious that the Ricci curvatures of the focal submanifolds are positive when $g=2, 3$. When $g=4$, it was dealt in (5.2) of \cite{TY15} that the Ricci curvature of $M_+$ satisfies $Ric(X)\geq 2(m_2-1)$. Thus it is positive if $m_2>1$. Similarly,  the Ricci curvature of $M_-$ is positive if $m_1>1$. We deal with the cases with $g=6$ and obtain the following theorem

\begin{prop}\label{Ricci of focal}
For focal submanifolds of an isoparametric hypersurface with $g=6$ in $S^{n+1}(1)$, we have
\begin{itemize}
\item[(i)] When $m=1$, the Ricci curvature is not non-negative;
\item[(ii)] When $m=2$, the Ricci curvature is non-negative.
\end{itemize}
\end{prop}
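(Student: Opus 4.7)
The plan is to invoke the Gauss equation together with the minimality of each focal submanifold in the ambient sphere. If $M^d\subset S^{n+1}(1)$ is such a submanifold and $A_0,\dots,A_{c-1}$ are the shape operators with respect to an orthonormal normal frame, then minimality gives $\operatorname{tr}(A_\alpha)=0$, so for every unit tangent vector $X$
\begin{equation*}
Ric(X)\;=\;(d-1)\;-\;\sum_{\alpha}|A_\alpha X|^2.
\end{equation*}
The proposition therefore reduces to estimating the top eigenvalue of the positive semidefinite operator $\sum_\alpha A_\alpha^{\,2}$, for which the shape operators recorded in (\ref{M+5}), (\ref{M-5}), (\ref{M+10}), (\ref{M-10}) in the proof of Theorem \ref{1} are entirely explicit.

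For part (i), $d=5$. From (\ref{M+5}) one checks that $A_0^{\,2}+A_1^{\,2}$ is diagonal with entry $6$ in the $e_1$-slot, hence $Ric(e_1)=4-6=-2<0$. From (\ref{M-5}), the operator $A_0^{\,2}+A_1^{\,2}$ splits as two copies of the $2\times 2$ block $\bigl(\begin{smallmatrix}4 & -2/\sqrt{3}\\ -2/\sqrt{3} & 8/3\end{smallmatrix}\bigr)$ on the $(e_1,e_4)$- and $(e_2,e_5)$-planes (plus zero on $e_3$). This block has eigenvalues $14/3$ and $2$, and the unit eigenvector $X=(\sqrt{3}\,e_1-e_4)/2$ associated to $14/3$ gives $Ric(X)=4-14/3=-2/3<0$.

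For part (ii), $d=10$ and I expand a tangent vector as $X=(x_1,x_2,x_3,x_4,x_5)$ with $x_i\in\R^2$; since $I$ and $J$ are orthogonal matrices one has $|Ix|=|Jx|=|x|$ and $\langle Iu,Iv\rangle=\langle Ju,Jv\rangle=\langle u,v\rangle$. A direct block computation using (\ref{M+10}) yields
\begin{equation*}
\sum_{\alpha=0}^{2}|A_\alpha X|^2\;=\;9|x_1|^2+|x_2|^2+|x_4|^2+9|x_5|^2\;\leq\;9|X|^2=d-1,
\end{equation*}
so $Ric(X)\geq 0$ on $M_+^{10}$, with equality exactly on the outer two $2$-planes. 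For $M_-^{10}$, using (\ref{M-10}), each of $|A_1X|^2$ and $|A_2X|^2$ separately produces cross terms of the form $\pm(4/\sqrt{3})\langle x_1,x_4\rangle$ and $\pm(4/\sqrt{3})\langle x_2,x_5\rangle$, but the opposite signs generated by the $I$-blocks of $A_1$ and the $J$-blocks of $A_2$ force these cross terms to cancel in the sum, leaving
\begin{equation*}
\sum_{\alpha=0}^{2}|A_\alpha X|^2\;=\;5\bigl(|X|^2-|x_3|^2\bigr)\;\leq\;5,
\end{equation*}
whence $Ric(X)\geq 4>0$ on $M_-^{10}$.

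The main technical obstacle is the $M_-^{10}$ step, where non-negativity of the Ricci curvature hinges on a delicate sign cancellation between the $I$-contributions from $A_1$ and the $J$-contributions from $A_2$: a single sign mistake would spuriously produce a negative-Ricci direction. To guard against this I will expand $|A_1X|^2+|A_2X|^2$ one $2\times 2$ block at a time, reducing with $J^2=\operatorname{Id}$ and $\langle Ju,Jv\rangle=\langle u,v\rangle$ before assembling the global quadratic form, and then add $|A_0X|^2$ to obtain the clean expression $5(|X|^2-|x_3|^2)$.
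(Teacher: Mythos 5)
Your proof is correct and follows essentially the same route as the paper: minimality plus the Gauss equation gives $Ric(X)=(d-1)-\sum_\alpha|A_\alpha X|^2$, and then one evaluates the quadratic form $\sum_\alpha A_\alpha^2$ using Miyaoka's explicit shape operators (\ref{M+5}), (\ref{M-5}), (\ref{M+10}), (\ref{M-10}); your block computations, including the cancellation of the $\pm\frac{4}{\sqrt{3}}\langle x_1,x_4\rangle$ and $\pm\frac{4}{\sqrt{3}}\langle x_2,x_5\rangle$ cross terms for $M_-^{10}$ and the resulting $Ric(X)=4+5|x_3|^2$, agree with the paper's formulas. The only cosmetic difference is that for $M_-^5$ you diagonalize the $2\times2$ block to locate the sharp minimum $-2/3$, whereas the paper simply exhibits one direction with $Ric=\frac{2}{3}-\frac{2}{\sqrt{3}}<0$.
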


\begin{proof}
(i). For $M_+^5$, let $e_1,\ldots, e_5$ be orthonormal basis of $T_pM_+$ corresponding to (\ref{M+5}). Let $X=\sum_{i=1}^5a_ie_i$ with $\sum_{i=1}^5a_i^2=1$.
Combining with the minimality of focal submanifolds in the unit sphere, we can calculate the Ricci curvature directly
\begin{equation}\label{Ricci of M+5}
Ric(X)=(n-1)|X|^2-\sum_{\alpha=0}^1|A_{\alpha}X|^2=-2a_1^2+\frac{10}{3}a_2^2+4a_3^2+\frac{10}{3}a_4^2-2a_5^2.
\end{equation}
Clearly, it is not non-negative.

For $M_-^5$,  let $e_1,\ldots, e_5$ be orthonormal basis of $T_pM_-$ corresponding to (\ref{M-5}). Let $X=\sum_{i=1}^5a_ie_i$ with $\sum_{i=1}^5a_i^2=1$.
A direct calculation leads to
\begin{equation}\label{Ricci of M-5}
Ric(X)=(n-1)|X|^2-\sum_{\alpha=0}^1|A_{\alpha}X|^2=\frac{4}{3}a_2^2+4a_3^2+\frac{4}{3}a_4^2+\frac{4}{\sqrt{3}}a_1a_4+\frac{4}{\sqrt{3}}a_2a_5.
\end{equation}
Again, it is not non-negative. For example, take $a_1=-\frac{\sqrt{2}}{2}, a_4=\frac{\sqrt{2}}{2}$, $a_1=a_3=a_5=0$, then $Ric(X)=\frac{2}{3}-\frac{2}{\sqrt{3}}<0.$

Moreover, as a quadratic form, the eigenvalues of $Ric(X)$ in (\ref{Ricci of M+5}) are $-2, -2, \frac{10}{3}, \frac{10}{3}, 4$, and that in (\ref{Ricci of M-5}) are $-\frac{2}{3}, -\frac{2}{3}, 2, 2, 4$. Therefore, the intrinsic geometry of the two focal submanifolds with $g=6, m=1$ are essentially different. Especially, they are not isometric to each other as mentioned in \cite{TXY14}.
\vspace{2mm}

(ii). For $M_+^{10}$, using (\ref{M+10}), we take a similar process as in (i) and obtain 
\begin{equation}\label{Ricci of M+10}
Ric(X)=(n-1)|X|^2-\sum_{\alpha=0}^2|A_{\alpha}X|^2=8(a_3^2+a_4^2+a_7^2+a_8^2)+9(a_5^2+a_6^2),
\end{equation}
which is obviously non-negative.

For $M_-^{10}$, using (\ref{M-10}), a similar process as in (i) leads to
\begin{equation}\label{Ricci of M-10}
Ric(X)=(n-1)|X|^2-\sum_{\alpha=0}^2|A_{\alpha}X|^2=4\sum_{i=1}^{10}a_i^2+5(a_5^2+a_6^2),
\end{equation}
which is obviously non-negative.

From (\ref{Ricci of M+10}), (\ref{Ricci of M-10}), we can see directly that the eigenvalues of the Ricci curvature of $M_+^{10}$ and $M_-^{10}$ are different, thus they are not isometric.
\end{proof}

\end{document}